 \newlength{\defbaselineskip}
 \pgfplotsset{width=7cm,compat=newest} 
\DeclareMathOperator{\supp}{supp}
\def \N{\mathbb{N}}
\def \R{\mathbb{R}}
\def \E{\mathbb{E}}
\def \P{\mathbb{P}}
\def \L{\mathcal{L}}
\def \m{\mathfrak{m}}
\def \O{\mathcal{O}}
\def \Ex{\mathcal{E}}
\theoremstyle{plain} 
\newtheorem{theorem}{Theorem}[section]
\newtheorem{lemma}[theorem]{Lemma}
\newtheorem{proposition}[theorem]{Proposition}
\newtheorem{remark}[theorem]{Remark}
\date{}
\begin{document}
\title{\bf Properties of the American price function in the Heston-type models}

\author{{\sc Damien Lamberton}\thanks{%
		Université Paris-Est, Laboratoire d'Analyse et de Mathématiques Appliquées (UMR 8050), UPEM, UPEC, CNRS, Projet Mathrisk INRIA, F-77454, Marne-la-Vallée, France - {\tt damien.lamberton@u-pem.fr}}\\
	{\sc Giulia Terenzi}\thanks{%
		Université Paris-Est, Laboratoire d'Analyse et de Mathématiques Appliquées (UMR 8050), UPEM, UPEC, CNRS, Projet Mathrisk INRIA, F-77454, Marne-la-Vallée, France, and Universit\`a di Roma Tor Vergata, Dipartimento di Matematica, Italy - {\tt terenzi@mat.uniroma2.it}}\\}
%
% \date{Preliminary Draft}
\maketitle
\begin{abstract}\noindent{\parindent0pt}
	We study some properties of the American option price in the  stochastic volatility Heston model. We first prove that, if the payoff function is convex and satisfies some regularity assumptions, then the option value function is increasing with respect to the  volatility variable. Then, we focus on the standard put option and we extend to the Heston model some well known results in the Black and Scholes world, most by using probabilistic techniques. In particular, we study  the  exercise boundary, we prove the strict convexity of the value function in the continuation region, we extend to this model the early exercise premium formula and we prove a weak form of the smooth fit property.
\end{abstract}

\noindent \textit{Keywords:}  American options;  optimal stopping problem; stochastic volatility.

\smallskip
\section{Introduction}
The Black and Scholes model (1973) was the starting point of equity dynamics modelling and it is still  widely used as a useful approximation. Nevertheless, it is a well known fact that it disagrees with reality in a number of significant ways and even one of the authors, F. Black, in 1988 wrote about the flaws of the model. Indeed, empirical studies show that in the real market the log-return process is not normally distributed and its distribution is often affected by heavy tail and high peaks. Moreover,  the 
assumption of a constant  volatility turns out to be too rigid to model the real world financial market. 

These limitations have called for more sophisticated models which can better reflect the reality and the fact that volatility should vary randomly is now completely recognized. 
A large body of literature was devoted to the so called stochastic volatility models, where the volatility  is modelled by an autonomous stochastic process driven by some additional random noise.  In this context, the celebrated model introduced by S. Heston in 1993 \cite{H} is one of the most widely used stochastic volatility models in the financial world and it was the starting point for several generalizations.

One of the strengths of the Black and Scholes type models relies in their analytical tractability. 
A large number of papers have been devoted to the pricing of European and American options and to the study of the regularity properties of the price in this framework.  

Things become more complicated in the case of stochastic volatility models. Some properties of European options were studied, for example, in \cite{Oa} but if we consider American options, as far as we know, the existing literature is rather poor. One of the main reference  is a paper by Touzi \cite{T}, in which  the author studies some  properties of a standard American put option in a class of stochastic volatility models under  classical assumptions, such as the  uniform ellipticity of the model. 

However, the assumptions in \cite{T}  are not satisfied by the  Heston  model because of its degenerate nature. In fact, the infinitesimal generator associated with the two dimensional diffusion given by the log-price process and the volatility process is not uniformly elliptic: it degenerates on the boundary of the domain, that is when the volatility variable vanishes. Therefore, the analytical characterization of an American option value does not follow from the classical theory of parabolic obstacle problems (we study this topic in  details in \cite{LT}) and some of the analytical techniques used in \cite{T} cannot be directly applied. 

This paper is devoted to the study of some properties of the American option price in the Heston model. Our main aim is to extend some well known results in the Black and Scholes world to the Heston type stochastic volatility models. We do it  mostly by using probabilistic techniques.

In more details, the paper is organized as follows. In Section 2 we recall the model and we set up our notation. In Section 3, we prove that, if the payoff function is convex and satisfies some regularity assumptions, the American option value function is increasing with respect to the volatility variable. This topic was already addressed in \cite{AOJ} with an elegant probabilistic approach, under the assumption that the coefficients of the model satisfy the well known Feller condition. Here, we prove it without imposing  conditions on the coefficients.

 Then, in Section 4 we focus on the standard American put option. We first generalise to the Heston model the well known notion of  critical price or exercise boundary  and we study some properties of this boundary. Then we prove that the American option price is strictly convex in the continuation region. This result was already proved in \cite{T} for uniformly elliptic stochastic volatility  by using PDE techniques.  Here, we extend the result to the degenerate Heston model by using a probabilistic approach.  We also give an explicit formulation of the early exercise premium, that  is the difference in price between an American option and an otherwise identical European option, and we do it by using  results first  introduced in \cite{J}. Finally,  we  provide  a weak form of the so called smooth fit property. 
The paper ends with an appendix, which is devoted to the proofs of some technical results.
	\section{The American option price in the Heston model}
We recall that in the stochastic volatility Heston model  the asset price $S$ and the volatility process $Y$ evolve under the pricing measure according to  the stochastic differential equation system
	\begin{equation}\label{hest}
		\begin{cases}
			\frac{dS_t}{S_t}=(r-\delta)dt + \sqrt{Y_t}dB_t,\qquad&S_0=s>0,\\ dY_t=\kappa(\theta-Y_t)dt+\sigma\sqrt{Y_t}dW_t, &Y_0=y\geq 0,
		\end{cases}
	\end{equation}
	where $B$ and $W$ denote two correlated Brownian motions with 
	$$
	d\langle B,W \rangle_t=\rho dt, \qquad \rho \in (-1,1).
	$$
	Here $r>0$ and $\delta\geq0$ are respectively  the risk free rate of interest and the continuous dividend rate. The dynamics of $Y$ follows a CIR process with mean reversion rate $\kappa>0$ and long run state $\theta> 0$. The parameter $\sigma>0$ is called the volatility of the volatility. It is well known that under the so called Feller condition on the coefficients, that is if  $2\kappa\theta \geq \sigma^2$, the process $Y$ with starting condition $Y_0=y>0$ remains always positive. On the other hand, if the Feller condition is not satisfied, $Y$ reaches zero with probability one for any $Y_0=y\geq 0$ (see, for example, \cite{Abook}. Otherwise stated, in this paper we do not assume that the Feller condition holds: in general, the process $Y$ can vanish.	
	
	We denote by $\L$ the infinitesimal generator of the pair $(S,Y)$, that is the differential operator given by
	\begin{equation}\label{L}
	\L= \frac y 2 \left(s^2 \frac{\partial^2}{\partial s^2 } +2s\rho \sigma  \frac{\partial^2}{\partial s \partial y }+ \sigma^2  \frac{\partial^2}{\partial y^2 }\right)+ \left(r-\delta \right)s\frac{\partial}{\partial s} +\kappa(\theta - y)\frac{\partial}{\partial y}.
	\end{equation}
Let  $(S^{t,s,y}_u,Y^{t,y}_u)_{u\in [t,T]}$ be  the solution of \eqref{hest} which starts  at time $t$ from the position  $(s,y)$. When the initial time is $t=0$ and there is no ambiguity, we will often write $(S^{s,y}_u,Y^{y}_u)$ or directly $(S_u,Y_u)$  instead of $(S^{0,s,y}_u,Y^{0,y}_u)$.
	In this framework,  the price of an American option with a nice enough payoff $(\varphi(S_t))_{t\in [0,T]} $ and maturity $T$ is given by $P_t=P(t,S_t,Y_t)$, where 

\begin{equation*}
P(t,s,y)=\sup_{\tau \in\mathcal T_{t,T}}\E[e^{-r(\tau-t )}\varphi(S^{t,s,y}_\tau)],
\end{equation*}
 $\mathcal{T}_{t,T}$ being  the set of the stopping times with values in $[t,T]$.
 
It  will be useful to consider  the log-price process  $X_t=\log S_t$. In this case, recall that the pair $(X,Y)$  evolves according to
 \begin{equation}
 \begin{cases}\label{hest2}
 dX_t=\left(r-\delta-\frac{Y_t}{2}\right)dt + \sqrt{Y_t}dB_t, \qquad&X_0=x=\log s\in\R,\\ dY_t=\kappa(\theta-Y_t)dt+\sigma\sqrt{Y_t}dW_t, &Y_0=y\geq 0,
 \end{cases}
 \end{equation}
 %with $X_0=x=\log s,Y_0=y >0$. 
 %fixed and $W$ and $B$ denote two correlated Brownian motions with 
 %$$
 %d\left\langle W,B \right\rangle_t=\rho dt, \qquad \rho \in (-1,1).
 %$$
 %We assume that $\kappa, \ \theta $ and $\sigma$ are positive constants, so that there exist a unique continuous solution of the SDE \eqref{hest}.  
 and has infinitesimal generator given by
 \begin{equation}\label{Llog}
 \tilde {\mathcal{L}}= \frac y 2 \left( \frac{\partial^2}{\partial x^2 } +2\rho \sigma  \frac{\partial^2}{\partial x \partial y }+ \sigma^2  \frac{\partial^2}{\partial y^2 }\right)+ \left(r-\delta-\frac y 2 \right)\frac{\partial}{\partial x} +\kappa(\theta - y)\frac{\partial}{\partial y}.
 \end{equation}
Note that $\tilde \L$ has unbounded coefficients and it is not uniformly elliptic: it degenerates on the boundary of the definition set $\O=\R\times (0,\infty)$, that is when $y=0$.

 With this change of variables, the American option price function is given by $u(t,x,y)=P(t,e^x,y)$, which can be rewritten as
 \begin{equation*}
 u(t,x,y)=\sup_{\tau \in \mathcal{T }_{t,T}}\E[e^{-r(\tau-t)}\psi(X^{t,x,y}_\tau)],
 \end{equation*}
 where $\psi(x)=\varphi(e^x)$.

	\section{Monotonicity with respect to the volatility}\label{sect-monotony}
In this section we  prove the increasing feature of the option price with respect to the volatility variable under the assumption that the payoff function $\varphi$ is convex and satisfies some regularity properties. The same topic was addressed by Touzi in \cite{T}  for uniformly elliptic stochastic volatility models  and by Assing \textit{et al.} \cite{AOJ} for a class of models which includes the Heston model when the Feller condition is satisfied.

  For convenience we pass to the logarithm in the $s-$variable and we study the monotonicity of the function $u$. Note that the convexity assumption on the payoff function $\varphi\in  C^2(\R)$ corresponds to the condition $ \psi''-\psi'\geq 0$ for the function $\psi(x)=\varphi(e^x)$.

 Let us recall some standard notation. For $\gamma >0$ we introduce the following weighted Sobolev spaces 
$$
L^2(\R,e^{-\gamma|x|})=\left \{ u:\R\rightarrow\R:\|u\|_2^2=\int u^2(x)e^{-\gamma|x|}dx<\infty \right   \},
$$
$$
W^{1,2}(\R,e^{-\gamma|x|})=\left \{ u\in L^2(\R,e^{-\gamma|x|}): \frac{\partial u }{\partial x}\in L^2(\R,e^{-\gamma|x|}) \right   \},
$$
$$
W^{2,2}(\R,e^{-\gamma|x|})= \left  \{ u\in L^2(\R,e^{-\gamma|x|}): \frac{\partial u }{\partial x},\frac{\partial^2 u }{\partial x^2}\in L^2(\R,e^{-\gamma|x|})   \right  \}.
$$
\begin{theorem}\label{monotonie}
	Let  $\psi$ be a bounded function such that $\psi\in W^{2,2}(\R,e^{-\gamma |x|})\cap  C^2(\R)$ and   $\psi''-\psi'\geq 0$.
Then the value function $u$ is nondecreasing with respect to the volatility variable.
%, that is $\frac{\partial u}{\partial y}\geq 0$.
\end{theorem}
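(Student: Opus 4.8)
The plan is to establish the monotonicity first for a family of regularized problems, where the generator is made uniformly elliptic, and then to pass to the limit. Following the variational set-up sketched above, I would fix a nondecreasing sequence $(\rho_m)\subset C^\infty$ with $1/m\le\rho_m\le m$ and $\rho_m(y)\to y$ locally uniformly, and write $\tilde{\mathcal{L}}=y\Lambda+\Lambda_0$ so that $\tilde{\mathcal{L}}_m:=\rho_m(y)\Lambda+\Lambda_0$ is uniformly elliptic with locally bounded coefficients. Then I would penalize the obstacle problem: for $\varepsilon>0$ let $u_{m,\varepsilon}$ solve $-\partial_t u-\tilde{\mathcal{L}}_m u+ru+\zeta_\varepsilon(u-\psi)=0$ with $u(T,\cdot)=\psi$, where $\zeta_\varepsilon$ is the usual $C^2$, concave, nondecreasing penalization of $\{u<\psi\}$. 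Existence, uniqueness and $\varepsilon$-uniform Sobolev estimates in the weighted spaces $W^{2,2}(\R,e^{-\gamma|x|})$-type scales follow from the coercive-problem theory (and the convergence results of \cite{LT}); in particular $u_{m,\varepsilon}$ is regular enough to be differentiated once in $y$ and twice in $x$.

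Next I would propagate the convexity hypothesis in the $x$ variable. Since all coefficients of $\tilde{\mathcal{L}}_m$ are independent of $x$, the operator $\partial_x$ commutes with $\tilde{\mathcal{L}}_m$; writing the equations satisfied by $\partial_x u_{m,\varepsilon}$ and $\partial_{xx}u_{m,\varepsilon}$ and subtracting, one finds that $v:=\partial_{xx}u_{m,\varepsilon}-\partial_x u_{m,\varepsilon}$ solves $-\partial_t v-\tilde{\mathcal{L}}_m v+(r+\zeta_\varepsilon'(u_{m,\varepsilon}-\psi))v=\zeta_\varepsilon'(u_{m,\varepsilon}-\psi)(\psi''-\psi')-\zeta_\varepsilon''(u_{m,\varepsilon}-\psi)(\partial_x u_{m,\varepsilon}-\psi')^2$, with terminal datum $v(T,\cdot)=\psi''-\psi'\ge 0$. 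The right-hand side is nonnegative because $\zeta_\varepsilon'\ge 0$ and $\psi''-\psi'\ge 0$, and because $\zeta_\varepsilon''\le 0$; the zeroth-order coefficient $r+\zeta_\varepsilon'$ is nonnegative. Hence the parabolic maximum principle (understood in the weighted framework) gives $\partial_{xx}u_{m,\varepsilon}-\partial_x u_{m,\varepsilon}\ge 0$.

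Then I would deduce the monotonicity in $y$. Setting $w:=\partial_y u_{m,\varepsilon}$, differentiating the penalized equation in $y$, and absorbing the first-order terms carrying the factor $\rho_m'(y)$ into a modified, still non-degenerate operator $\bar{\mathcal{L}}_m$, one obtains $-\partial_t w-\bar{\mathcal{L}}_m w+c_{m,\varepsilon}\,w=\tfrac12\rho_m'(y)\big(\partial_{xx}u_{m,\varepsilon}-\partial_x u_{m,\varepsilon}\big)$, with $w(T,\cdot)=0$ since $\psi$ does not depend on $y$, and with $c_{m,\varepsilon}$ bounded below. By the previous step and $\rho_m'\ge 0$ the source term is nonnegative, so the comparison principle yields $w\ge 0$: each $u_{m,\varepsilon}$ is nondecreasing in $y$. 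Letting $\varepsilon\to 0$ identifies $u_{m,\varepsilon}$ with the solution $u_m$ of the obstacle problem for $\tilde{\mathcal{L}}_m$, and letting $m\to\infty$ gives $u_m\to u$, the Heston American price, by the stability results of \cite{LT} together with the probabilistic representation (the boundedness of $\psi$ and the weight $e^{-\gamma|x|}$ control the behaviour at infinity). Monotonicity in $y$ passes to these pointwise limits, which proves the statement.

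I expect the main obstacle to be precisely this limiting step: justifying that the regularized American prices converge to $u$ for the genuinely degenerate Heston generator, and securing, in the weighted spaces forced by the unbounded coefficients and the degeneracy at $y=0$, enough a priori regularity of $u_{m,\varepsilon}$ to legitimately differentiate the penalized equation twice in $x$ and once in $y$. As an alternative, one can attempt a purely probabilistic argument: couple the two CIR trajectories so that $Y^{y_1}\le Y^{y_2}$ pathwise, and use that, conditionally on the Brownian motion $W$ driving $Y$, the log-price is a time-changed Brownian motion with drift whose law becomes more spread out as $y$ increases, Gaussian integration by parts then converting the hypothesis $\psi''-\psi'\ge 0$ into the desired inequality. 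This is transparent when $\rho=0$, but when $\rho\neq 0$ the clock $\int_t^{\cdot}Y_v\,dv$ is correlated with the Brownian motion driving the log-price and an extra stochastic term survives the conditioning; controlling that correlation is the real difficulty and is, presumably, why the regularization route is to be preferred here.
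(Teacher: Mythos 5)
Your core argument---penalize the obstacle problem for a uniformly elliptic approximation of the generator, differentiate twice in $x$ to propagate $\psi''-\psi'\ge 0$ via the comparison principle, then differentiate once in $y$ and use the resulting nonnegative source $\tfrac12\rho_m'(y)(\partial_{xx}u-\partial_x u)$ to conclude $\partial_y u\ge 0$---is exactly the skeleton of the paper's proof of Proposition~\ref{mon2}, and the hypotheses $\psi''-\psi'\ge 0$, $\zeta_\varepsilon$ concave nondecreasing enter in the same way. Where you diverge from the paper is in the regularization and the limiting step. You regularize the PDE by replacing $y$ with $\rho_m(y)\to y$ and then appeal to the variational stability theory of \cite{LT} to deduce $u_m\to u$; the paper instead regularizes the SDE itself, approximating $\sqrt{y^+}$ by smooth functions $f_n$ with $1/n\le f_n\le n$ and $f_n^2$ Lipschitz uniformly in $n$, so that $(X^n,Y^n)$ is a genuinely uniformly elliptic diffusion, and it proves convergence of the value functions probabilistically: Lemma~\ref{lemmasup_diff} shows $\sup_{t\le T}|X^n_t-X_t|\to 0$ and $\sup_{t\le T}|Y^n_t-Y_t|\to 0$ in probability, and Proposition~\ref{convP} then converts this into $u^n\to u$ pointwise for bounded continuous $\psi$ by a uniform-over-stopping-times estimate. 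This probabilistic convergence is the step you flagged as ``the main obstacle'', and it is precisely what the paper engineers away by choosing the SDE-level approximation: it avoids loading the weight-space machinery of \cite{LT} with the entire burden of stability, and makes the limit $u_m\to u$ a consequence of pathwise closeness rather than of PDE a~priori estimates. Your alternative ``purely probabilistic'' route (condition on $W$, time-changed Brownian motion) is indeed the approach of \cite{AOJ}, which the paper explicitly says requires the Feller condition; your diagnosis of the correlation obstruction when $\rho\neq 0$ matches why the authors chose the regularize-and-differentiate route instead.
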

In order to prove Theorem \ref{monotonie}, let us consider   a smooth approximation $f_n\in C^\infty(\R)$ of the function $f(y)=\sqrt{y^+}$, such that $f_n$ has  bounded derivatives, $1/n\leq f_n\leq n$, $f_n(y)$ is increasing in $y$, $f_n^2$ is Lipschitz continuous uniformly in $n$ and $f_n\rightarrow f$ locally uniformly  as $n\rightarrow \infty$. %Moreover, assume that there exists a constant $A>0$ such that $f_n(x)\leq A(1+|x|)$.

Then, we consider  the sequence of SDEs
 \begin{equation}
 \begin{cases}\label{hestapprox}
 dX^n_t=\left(r-\delta-\frac{f_n^2(Y^n_t)} 2 \right) dt + f_n(Y^n_t)dB_t,\qquad &X^n_0=x,\\ dY^n_t=\kappa\left(\theta-f_n^2(Y^n_t)\right)dt+\sigma f_n(Y^n_t)dW_t, &Y_0^n=y.
 \end{cases}
 \end{equation}
 Note that, for every $n\in\N$, the diffusion matrix $a_n(y)=\frac 1 2 \Sigma_n(y) \Sigma_n(y)^t$, where 
 		$$
 		\Sigma_n(y) = \left(
 		\begin{array}{cc}
 	\sqrt{1-\rho^2}	f_n(y) &  \rho f_n(y) \\ 
 		0 &  \sigma f_n(y)
 		\end{array} 	\right),
 		$$
 		 is uniformly elliptic.
 For any fixed $n\in\N$ the infinitesimal generator of the diffusion $(X^n,Y^n)$ is given by
 $$
\tilde{ \mathcal{L}}^n= \frac {f_n^2(y)} 2\left( \frac{\partial^2}{\partial x^2} +2\rho\sigma \frac{\partial^2}{\partial x \partial y}+\sigma^2 \frac{\partial^2}{\partial y^2}\right) +\left(r-\delta-\frac{f_n^2(y)} 2 \right) \frac{\partial }{\partial x} + \kappa\left(\theta-f_n^2(y)\right)\frac{\partial }{\partial y}
 $$
 and it is uniformly elliptic with  bounded coefficients.
 
 We will need the following result.
 \begin{lemma}\label{lemmasup_diff}
 	For any $\lambda>0$, we have\begin{equation}
 	\label{1} \lim_{n \rightarrow \infty} \P\left( \sup_{t \in [0,T]}|X^n_t-X_t|\geq \lambda \right)=0 \end{equation}  and\begin{equation}
 	\label{2}
 	\lim_{n \rightarrow \infty} \P\left(\sup_{t \in [0,T]}|Y^n_t-Y_t|\geq \lambda \right)=0 .\end{equation}  
 \end{lemma}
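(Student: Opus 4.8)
The plan is to prove \eqref{2} first (the $Y$--equation being autonomous) and then to deduce \eqref{1} from it. The key preliminary remark is that, by the comparison properties of the CIR process recalled in Section~2, $Y_t\ge 0$ for all $t\in[0,T]$ almost surely, so that $f(Y_t)=\sqrt{Y_t}$ and $f^2(Y_t)=Y_t$; in other words $Y$ itself solves an equation of the same form as $Y^n$, namely $dZ_t=\kappa(\theta-h^2(Z_t))\,dt+\sigma h(Z_t)\,dW_t$ with $h=f$ (resp.\ $h=f_n$). The drift coefficients $y\mapsto\kappa(\theta-f_n^2(y))$ are Lipschitz uniformly in $n$ by assumption, and for the diffusion coefficients one has the elementary bounds $(f_n(a)-f_n(b))^2\le|f_n^2(a)-f_n^2(b)|\le L|a-b|$ and $(f_n(a)-\sqrt b)^2\le|f_n^2(a)-b|$ for $b\ge 0$, both being consequences of $f_n\ge 0$ (which forces $f_n(a)+f_n(b)\ge|f_n(a)-f_n(b)|$); thus the square--root coefficients are $\tfrac12$--Hölder with a constant $\sqrt L$ independent of $n$, and in addition $f_n^2\to(\cdot)^+$ locally uniformly.

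Set $D^n_t=Y^n_t-Y_t$. From the uniform Lipschitz property of $f_n^2$ one gets, for all $s$,
\[
\kappa\bigl|f_n^2(Y^n_s)-Y_s\bigr|\le \kappa L\,|D^n_s|+\kappa\,\varepsilon_n(s),\qquad \bigl(f_n(Y^n_s)-\sqrt{Y_s}\bigr)^2\le L\,|D^n_s|+\varepsilon_n(s),
\]
where $\varepsilon_n(s):=\bigl|f_n^2(Y_s)-Y_s\bigr|$ depends only on $Y$ and, since $Y$ has continuous paths bounded on $[0,T]$ and $f_n^2(y)\le f_n^2(0)+Ly$ with $\sup_n f_n^2(0)<\infty$, satisfies $\E\int_0^T\varepsilon_n(s)\,ds\to 0$ by dominated convergence. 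Now I would run the Yamada--Watanabe argument: let $(\phi_m)_m$ be the standard even $C^2$ functions with $\phi_m(x)\uparrow|x|$, $|x|-\phi_m(x)\le a_{m-1}$, $|\phi_m'|\le 1$, $\phi_m''\ge 0$ supported in $\{a_m<|x|<a_{m-1}\}$ and $|x|\,\phi_m''(x)\le \tfrac2m$ (hence $\phi_m''(x)\le \tfrac{2}{m a_m}$ on its support), where $a_m\downarrow 0$ is chosen in the usual way. Applying Itô's formula to $\phi_m(D^n_t)$ and taking expectations — the stochastic integral being a genuine martingale since the coefficients of $Y^n$ are bounded and $\E\int_0^T Y_s\,ds<\infty$ — and using the two displayed bounds together with $\rho$-free estimates $\phi_m''(u)u\le 2/m$ and $\phi_m''\le 2/(ma_m)$, one obtains for $\eta_n(t):=\E|D^n_t|$, for every $m$ and $n$,
\[
\eta_n(t)\le \kappa L\int_0^t\eta_n(s)\,ds+a_{m-1}+\frac{C}{m}+\frac{C}{m\,a_m}\,\E\!\int_0^T\varepsilon_n(s)\,ds+C\,\E\!\int_0^T\varepsilon_n(s)\,ds,
\]
and Gronwall's lemma bounds $\eta_n(t)$ by $e^{\kappa L T}$ times the last four terms. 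Letting $n\to\infty$ with $m$ fixed kills the two error integrals, and then $m\to\infty$ gives $\E|Y^n_t-Y_t|\to 0$ for every $t\in[0,T]$. To upgrade this to $\E\sup_{t\le T}|Y^n_t-Y_t|\to 0$, write $Y^n_t-Y_t$ through the SDE, bound the drift part by $\int_0^T(\kappa L|D^n_s|+\kappa\varepsilon_n(s))\,ds$ and, by Doob's and the Burkholder--Davis--Gundy inequalities, the martingale part by a constant times $\bigl(\E\int_0^T(L|D^n_s|+\varepsilon_n(s))\,ds\bigr)^{1/2}$, and conclude by dominated convergence in $s$ from the pointwise convergence just obtained; \eqref{2} then follows by Markov's inequality.

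For \eqref{1}, subtracting \eqref{hestapprox} from \eqref{hest2} and using $Y_s\ge 0$,
\[
\sup_{t\le T}|X^n_t-X_t|\le\frac12\int_0^T\bigl|Y_s-f_n^2(Y^n_s)\bigr|\,ds+\sup_{t\le T}\Bigl|\int_0^t\bigl(f_n(Y^n_s)-\sqrt{Y_s}\bigr)\,dB_s\Bigr|,
\]
and both terms tend to $0$ in $L^1$: the first because $|Y_s-f_n^2(Y^n_s)|\le L|D^n_s|+\varepsilon_n(s)$, and the second, by Doob--BDG, because $\E\int_0^T(f_n(Y^n_s)-\sqrt{Y_s})^2\,ds\le\E\int_0^T(L|D^n_s|+\varepsilon_n(s))\,ds\to 0$ by the previous step; Markov's inequality gives \eqref{1}.

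\textbf{Main obstacle.} The real work is the $L^1$--convergence $Y^n\to Y$. Since the diffusion coefficients are only $\tfrac12$--Hölder, a naive Gronwall estimate on $\E|D^n_t|^2$ is unavailable, and the Yamada--Watanabe device must be combined with a careful treatment of the approximation errors $\varepsilon_n$; in particular the double limit has to be taken in the right order — first $n\to\infty$ for fixed $m$, then $m\to\infty$ — since sending $m\to\infty$ prematurely would make the factor $1/(m\,a_m)$ diverge. Everything afterwards (passing from fixed--$t$ to uniform convergence, and handling the $X$--component, which does not feed back on itself) is soft.
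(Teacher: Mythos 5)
Your proof is correct and follows essentially the same Yamada--Watanabe argument as the paper (Itô's formula applied to $\phi_m(Y^n_t-Y_t)$, Gronwall, upgrade to uniform convergence via BDG, then the $X$-equation by subtraction). The only difference is a cosmetic one in the triangle-inequality splitting: you isolate the approximation error at $Y_s$ through $\varepsilon_n(s)=|f_n^2(Y_s)-Y_s|$ and the uniform Lipschitz bound on $f_n^2$, whereas the paper isolates it at $Y^n_s$ through $|f_n^2(Y^n_s)-Y^n_s|$ and then invokes the uniform moment bound \eqref{moments} on $Y^n$; your variant is marginally tidier since the error term depends only on $Y$.
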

 The proof is inspired by the proof of uniqueness of the solution for the CIR process (see \cite[Section IV.3]{IW}). We postpone it to the Appendix.
 
From now on, let us set $\E_{x,y}[\cdot]=\E[\cdot|(X_0,Y_0)=(x,y)]$.  For every $n \in \N$,	we consider the American value function with payoff $\psi$ and underlying diffusion $(X^n,Y^n)$,  that is
 $$
 u^n(t,x,
 y)=   \sup_{\tau \in \mathcal{T}_{0,T-t}} \E_{x,y} \left[   e^{-r\tau} \psi(X_\tau^{n})       \right],\qquad  (t,x,y) \in [0,T]\times \R \times [0,\infty).
 $$
 We prove that $u^n$ is actually an approximation of the function $u$, at least for bounded continuous payoff functions.
 \begin{proposition}\label{convP}
 Let $\psi$ be a bounded continuous function.
 	Then, 
 	$$
 	\lim_{n \rightarrow \infty }|	u^n(t,x,y) -u(t,x,y) |=0,\qquad  (t,x,y) \in [0,T]\times \R \times [0,\infty).
 	$$ 
 \end{proposition}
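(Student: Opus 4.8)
The plan is to deduce Proposition~\ref{convP} from Lemma~\ref{lemmasup_diff} by a soft stability argument for optimal stopping values. Throughout, I would work on a probability space carrying the Brownian motions $B,W$, equipped with their augmented natural filtration, with respect to which all the stopping time families $\mathcal{T}_{\cdot,\cdot}$ are understood, and couple $(X,Y)$ and $(X^n,Y^n)$ by driving them with the same $B,W$ from the common initial point $(x,y)$, exactly as in Lemma~\ref{lemmasup_diff}. By the time-homogeneity of the systems \eqref{hest2} and \eqref{hestapprox}, the shift $\tau\mapsto\tau-t$ identifies $\mathcal{T}_{t,T}$ with $\mathcal{T}_{0,T-t}$ and gives
$$
u(t,x,y)=\sup_{\tau\in\mathcal{T}_{0,T-t}}\E_{x,y}\bigl[e^{-r\tau}\psi(X_\tau)\bigr],
$$
so that $u(t,x,y)$ and $u^n(t,x,y)$ are suprema of $\tau\mapsto\E_{x,y}[e^{-r\tau}\psi(X_\tau)]$ and $\tau\mapsto\E_{x,y}[e^{-r\tau}\psi(X^n_\tau)]$ over the same family of stopping times.

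The first step is a bound on $|u^n-u|$ that is uniform in $\tau$. Since $0\le e^{-r\tau}\le 1$, for every $\tau\in\mathcal{T}_{0,T-t}$ one has
$$
\Bigl|\E_{x,y}\bigl[e^{-r\tau}\psi(X^n_\tau)\bigr]-\E_{x,y}\bigl[e^{-r\tau}\psi(X_\tau)\bigr]\Bigr|\le \E_{x,y}\Bigl[\sup_{s\in[0,T]}\bigl|\psi(X^n_s)-\psi(X_s)\bigr|\Bigr]=:\eta_n,
$$
and $\eta_n$ does not depend on $\tau$ (it depends on $(x,y)$, but not on $t$). Applying the elementary inequality $|\sup_\tau F(\tau)-\sup_\tau G(\tau)|\le\sup_\tau|F(\tau)-G(\tau)|$ to $F(\tau)=\E_{x,y}[e^{-r\tau}\psi(X^n_\tau)]$ and $G(\tau)=\E_{x,y}[e^{-r\tau}\psi(X_\tau)]$ (both finite, since $\psi$ is bounded) yields
$$
\bigl|u^n(t,x,y)-u(t,x,y)\bigr|\le\eta_n \qquad\text{for all }t\in[0,T].
$$

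It remains to prove $\eta_n\to 0$. By \eqref{1}, $\sup_{s\in[0,T]}|X^n_s-X_s|\to 0$ in probability. From an arbitrary subsequence I would extract a further subsequence along which this convergence holds almost surely, and fix $\omega$ in the corresponding full-measure event. The path $s\mapsto X_s(\omega)$ is continuous on $[0,T]$, hence its range lies in a compact interval $[-R(\omega),R(\omega)]$, and for $n$ large along the subsequence $\sup_{s\in[0,T]}|X^n_s(\omega)-X_s(\omega)|\le 1$, so every path $s\mapsto X^n_s(\omega)$ also stays in $[-R(\omega)-1,R(\omega)+1]$; on this compact interval the continuous function $\psi$ is uniformly continuous, whence $\sup_{s\in[0,T]}|\psi(X^n_s(\omega))-\psi(X_s(\omega))|\to 0$ along the subsequence. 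Since moreover $\sup_{s\in[0,T]}|\psi(X^n_s)-\psi(X_s)|\le 2\|\psi\|_\infty$ by boundedness of $\psi$, the dominated convergence theorem gives $\eta_n\to 0$ along the subsequence; as the subsequence was arbitrary, $\eta_n\to 0$. Together with the previous step this proves $\lim_{n\to\infty}|u^n(t,x,y)-u(t,x,y)|=0$ for every $(t,x,y)$ (in fact uniformly in $t$).

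The only delicate point is the last step: because $\psi$ is merely continuous, $|\psi(X^n_s)-\psi(X_s)|$ cannot be controlled by a deterministic modulus of continuity applied to $|X^n_s-X_s|$, so one must first localize to the random compact range of the limiting path before using uniform continuity — hence the passage through almost surely convergent subsequences. Apart from this, the proposition is a soft consequence of Lemma~\ref{lemmasup_diff}, which carries the real analytic content and is established separately in the Appendix.
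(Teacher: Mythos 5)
Your proof is correct and follows the same overall route as the paper: reduce $|u^n(t,x,y)-u(t,x,y)|$ to $\E_{x,y}\bigl[\sup_{s\in[0,T]}|\psi(X^n_s)-\psi(X_s)|\bigr]$ via the elementary bound $|\sup_\tau F-\sup_\tau G|\le\sup_\tau|F-G|$ together with $0\le e^{-r\tau}\le 1$, and then show that this expectation tends to zero using Lemma~\ref{lemmasup_diff}. The only place you diverge is the final convergence step. The paper splits on the event $\{\sup_t|X^n_t-X_t|\le\lambda\}$ and declares that the claim ``easily follows'' from \eqref{1} and the arbitrariness of $\lambda$; that truncation argument bounds the bad set by $2\|\psi\|_\infty\,\P(\sup_t|X^n_t-X_t|>\lambda)\to 0$, but controlling the good set by a quantity going to zero with $\lambda$ tacitly uses a deterministic modulus of continuity for $\psi$, i.e.\ uniform continuity on all of $\R$, which a merely bounded continuous $\psi$ need not have. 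Your version (pass to a subsequence with a.s.\ convergence, localize to the compact range of the limit path where $\psi$ is uniformly continuous, then apply dominated convergence and the arbitrary-subsequence principle) closes exactly that gap, so it is the more rigorous rendering of the same idea; for the put payoff actually used later ($\psi$ Lipschitz), both arguments work verbatim.
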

 \begin{proof}
 	For any $\lambda>0$,
 	\begin{align*}
 	\bigg|      \sup_{\tau \in \mathcal{T}_{0,T-t}} \E_{x,y} &\left[   e^{-r\tau} \psi(X_\tau^{n})       \right]-  \sup_{\tau \in \mathcal{T}_{0,T-t}} \E_{x,y} \left[   e^{-r\tau}  \psi(X_\tau)      \right] \bigg|\\
 	&\leq  \sup_{\tau \in \mathcal{T}_{0,T-t}}   	\bigg|  \E_{x,y} \left[   e^{-r\tau}  (\psi(X_\tau^{n})    -\psi(X_\tau)  ) \right] \bigg|\\
 	&\leq \E_{x,y} \left[ \sup_{t\in[0,T]}  | \psi(X_t^{n})    -\psi(X_t)   |   \right]\\
 	& \leq   \E_{x,y} \left[ \sup_{t\in[0,T]}  | \psi(X_t^{n})    -\psi(X_t)    |  \mathbf{1}_{\{|X^n_t-X_t|\leq \lambda\}  }\right]+ 2 \|\psi\|_\infty \P\left(  \sup_{t\in[0,T]}  |X^n_t-X_t |>\lambda \right).
 	\end{align*}
 	Then the assertion easily follows using \eqref{1} and the arbitrariness of $\lambda$.
%  Then recall that, up to pass to a subsequence, we have $ \lim_{n \rightarrow \infty }\sup_{t\in[0,T]}  |X^n_t-X_t | =0$ a.s. so, since $\psi$ is continuous,  for every $t$ we have
%  %$ \lim_{n \rightarrow \infty } |X^n_t-X_t | =0$ a.s.  and  
%  $ \lim_{n\rightarrow \infty } |\psi(X^n_t)-\psi(X_t)|=0 $ a.s..  % We can then reason for each fixed $\omega$  and pass to the $\sup_{t\in[0,T]}  |\psi(X^n_t)-\psi(X_t)|$. 
%  %Since $\psi$ is continuous
%  Moreover,  there exists $\bar t$ such that $\sup_{t\in[0,T]}  |\psi(X^n_t)-\psi(X_t)|=|\psi(X^n_{\bar{t}})-\psi(X_{\bar{t}})|$ and, since $\psi$ is Lipschitz continuous,  we can pass to the limit as $n \rightarrow \infty $ in the above expression and the assertion follows.
 \end{proof}

We can now prove that, for every $n\in\N$,  the approximated price function $u^n$ is nondecreasing with respect to the volatility variable. 
\begin{proposition}\label{mon2}
Assume that $\psi \in W^{2,2}(\R,e^{-\gamma|x|}dx)\cap C^2(\R)$ and $\psi''-\psi'\geq 0$. Then  $\frac{\partial u^n}{\partial y }\geq 0$ for every $n\in\N$.
\end{proposition}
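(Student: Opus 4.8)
The plan is to exploit the uniform ellipticity of $\tilde{\mathcal{L}}^n$ and argue by penalisation, deducing the sign of $\partial u^n/\partial y$ from a parabolic maximum principle applied to well-chosen derivatives of a smoothed value function. Since $(X^n,Y^n)$ is time-homogeneous and $\psi$ does not depend on time, $u^n(t,x,y)$ depends on $t$ only through $s:=T-t$; writing $\bar u^n(s,x,y)=u^n(T-s,x,y)$, the function $\bar u^n$ solves the obstacle problem $\min\big(\partial_s\bar u^n-\tilde{\mathcal{L}}^n\bar u^n+r\bar u^n,\ \bar u^n-\psi\big)=0$ on $(0,T]\times\R\times\R$ with $\bar u^n(0,\cdot)=\psi$ (as $f_n$ is defined on all of $\R$, the equations \eqref{hestapprox} have bounded Lipschitz coefficients on $\R^2$, so the problem may be posed on $\R\times\R$ and no condition at $y=0$ is needed). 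We penalise: let $\zeta_\varepsilon$ be a smooth, nondecreasing, concave function (so $\zeta_\varepsilon'\ge 0$, $\zeta_\varepsilon''\le 0$), vanishing on $[0,\infty)$, with $\zeta_\varepsilon'$ bounded and $\zeta_\varepsilon\to-\infty$ on $(-\infty,0)$ as $\varepsilon\to 0$, and let $u^n_\varepsilon$ solve $\partial_s u^n_\varepsilon=\tilde{\mathcal{L}}^n u^n_\varepsilon-ru^n_\varepsilon-\zeta_\varepsilon(u^n_\varepsilon-\psi)$ with $u^n_\varepsilon(0,\cdot)=\psi$. Since $\tilde{\mathcal{L}}^n$ is uniformly elliptic with bounded coefficients, $u^n_\varepsilon$ has the regularity used below, after first replacing $\psi$ by a mollification $\psi_k=\psi*\eta_k\in C^\infty$ (which we continue to denote $\psi$): this still satisfies $\psi''-\psi'=(\psi_k''-\psi_k')=(\psi''-\psi')*\eta_k\ge 0$ and $\|\psi_k\|_\infty\le\|\psi\|_\infty$, converges to $\psi$ locally uniformly, and monotonicity in $y$ is stable under the limit $k\to\infty$ (the value function depends continuously on the bounded payoff, exactly as in the proof of Proposition \ref{convP}).

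The first step is to show $\partial^2_{xx}u^n_\varepsilon-\partial_x u^n_\varepsilon\ge 0$. As the coefficients of $\tilde{\mathcal{L}}^n$ do not depend on $x$, differentiating the penalised equation once and twice in $x$ and subtracting, the function $v:=\partial^2_{xx}u^n_\varepsilon-\partial_x u^n_\varepsilon$ satisfies
\begin{align*}
\partial_s v-\tilde{\mathcal{L}}^n v+\big(r+\zeta_\varepsilon'(u^n_\varepsilon-\psi)\big)v&=-\zeta_\varepsilon''(u^n_\varepsilon-\psi)\big(\partial_x u^n_\varepsilon-\psi'\big)^2+\zeta_\varepsilon'(u^n_\varepsilon-\psi)\big(\psi''-\psi'\big),\\
v(0,\cdot)&=\psi''-\psi'.
\end{align*}
The right-hand side is $\ge 0$, since $\zeta_\varepsilon''\le 0$ (concavity), $\zeta_\varepsilon'\ge 0$ (monotonicity) and $\psi''-\psi'\ge 0$ by hypothesis; the initial datum is $\ge 0$ for the same reason; the zeroth-order coefficient is bounded. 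Hence $v\ge 0$ by the parabolic maximum (comparison) principle.

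The second step is to differentiate in $y$. Put $w:=\partial_y u^n_\varepsilon$; using $\partial_y\psi=0$ and the identity $(\partial_y\tilde{\mathcal{L}}^n)u^n_\varepsilon=\frac{(f_n^2)'}{2}\big(v+2\rho\sigma\,\partial_x w+\sigma^2\partial_y w\big)-\kappa(f_n^2)'\,w$, one obtains that $w$ solves the linear parabolic equation
\begin{align*}
\partial_s w-\tilde{\mathcal{L}}^n_\sharp w+\big(r+\kappa(f_n^2)'+\zeta_\varepsilon'(u^n_\varepsilon-\psi)\big)w&=\tfrac{1}{2}(f_n^2)'\,v,\\
w(0,\cdot)&=0,
\end{align*}
where $\tilde{\mathcal{L}}^n_\sharp:=\tilde{\mathcal{L}}^n+\frac{(f_n^2)'}{2}\big(2\rho\sigma\,\partial_x+\sigma^2\partial_y\big)$ has the same principal part as $\tilde{\mathcal{L}}^n$, hence is again uniformly elliptic with bounded coefficients ($f_n^2$ being Lipschitz uniformly in $n$). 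The source $\tfrac12(f_n^2)'\,v$ is $\ge 0$, because $(f_n^2)'=2f_nf_n'\ge 0$ ($f_n\ge 1/n>0$ and $f_n$ nondecreasing) and $v\ge 0$ by the first step; the initial datum vanishes. The maximum principle gives $w\ge 0$, i.e. $\partial_y u^n_\varepsilon\ge 0$. Letting $\varepsilon\to 0$, $u^n_\varepsilon\to u^n$ (standard convergence of the penalised problem to the obstacle problem for the uniformly elliptic operator $\tilde{\mathcal{L}}^n$), the inequality $u^n(t,x,y_1)\le u^n(t,x,y_2)$ for $y_1\le y_2$ passes to the limit, and undoing the mollification of $\psi$ concludes; the same passage yields $\partial u^n/\partial y\ge 0$ in the weak (a.e.) sense.

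I expect the main obstacle to be the regularity bookkeeping: guaranteeing that $u^n_\varepsilon$, together with its first- and second-order increments in $x$ and $y$, lies in a space in which the maximum principle and the manipulations above are legitimate, and uniformly enough in $\varepsilon$ to take the limit. This is best carried out in the weighted Sobolev framework of the spaces $W^{2,2}(\R,e^{-\gamma|x|})$ introduced above, working with the difference quotients $h^{-1}\big(u^n_\varepsilon(\cdot,x+h,y)-u^n_\varepsilon(\cdot,x,y)\big)$ and $h^{-1}\big(u^n_\varepsilon(\cdot,x,y+h)-u^n_\varepsilon(\cdot,x,y)\big)$ in place of classical derivatives, together with energy estimates uniform in $\varepsilon$; the sign structure of the right-hand sides above is insensitive to this reformulation, and the choice of $\zeta_\varepsilon$ concave and nondecreasing is precisely what makes those signs come out right.
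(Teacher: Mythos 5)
Your argument is correct and is essentially the paper's proof: you penalise the variational inequality, show $\partial^2_{xx}u^n_\varepsilon-\partial_x u^n_\varepsilon\geq 0$ by the comparison principle using the concavity and monotonicity of $\zeta_\varepsilon$ together with $\psi''-\psi'\geq 0$, then feed this as a nonnegative source (multiplied by $(f_n^2)'\geq 0$) into the equation satisfied by $\partial_y u^n_\varepsilon$ and apply the comparison principle a second time, finally letting $\varepsilon\to 0$. The only deviations are cosmetic — the paper works in backward time, presents the two maximum-principle steps in the reverse order, and invokes classical parabolic regularity for $u^n_\varepsilon$ directly from $\psi\in C^2$ where you propose the (equally valid, arguably safer) precaution of mollifying $\psi$ and working with difference quotients.
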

\begin{proof}
Fix $n\in\N$.  We know from the classical theory of variational inequalities that
%, under the assumption $\psi\in C(\R)\cap W^{1,2}(\R,e^{-\gamma|x|}dx)$, 
$u^n$ is the unique solution of the associated variational inequality (see, for example, \cite{JLL}). Moreover, 
$u^n$ is the limit of the solutions of a sequence of  penalized problems. In particular, consider a family of penalty functions $\zeta_\varepsilon:\R\rightarrow\R$  such that, for each $\varepsilon>0$, 
$\zeta_\varepsilon$ is a  $C^2 $, nondecreasing and concave function with bounded derivatives, satisfying $\zeta_\varepsilon(u)=0$, for $u\geq 	\varepsilon$ and $\zeta_\varepsilon(0)=b$, where $b$ is such that $\tilde{\mathcal{A}}^n\psi\geq b$ with the notation $\tilde{\mathcal{A}}^n=\tilde \L^n-r$ (see the proof of Theorem 3 in \cite{damien}). Then,  there exists a sequence  $(u^n_\varepsilon)_{\varepsilon>0}$ such that $\lim_{\varepsilon\rightarrow 0 }u^n_\varepsilon=u^n$ in the sense of distributions and, for every $\varepsilon >0$, 
\begin{equation*}
\begin{cases}
-\frac{\partial u^n_\varepsilon}{\partial t}- \mathcal{A}^n u^n_\varepsilon + \zeta_\varepsilon(u^n_\varepsilon-\psi)=0,\\
u^n_\varepsilon(T)=	\psi(T).
\end{cases}
\end{equation*}
In order to simplify the notation, hereafter in this proof we denote  by $u$ the function $u^n_\varepsilon$. 

Recall that, from the classical theory of parabolic semilinear equations, since $\psi\in C^2(\R)$ we have that $u\in  C^{2,4}([0,T), \R\times (0,\infty))$ (here we refer, for example, to \cite[Chapter VI]{LSU}). 
Set now $\bar u= \frac{\partial u }{\partial y }$. Differentiating the equation satisfied by $u^n$, since $\psi$ does not depend on $y$, we get that $\bar u$ satisfies 
\begin{equation*}
\begin{cases}
-\frac{\partial \bar u}{\partial t}-\bar{ \mathcal{A}}^n \bar u=  f_n(y)f_n'(y) \left(\frac{\partial^2 u}{\partial x^2}-\frac{\partial u}{\partial x}\right),\\
\bar u(T)=0,
\end{cases}
\end{equation*}
where 
\begin{align*}
\bar{\mathcal{A}}^n&= \frac {f_n^2(y)} 2\left( \frac{\partial^2}{\partial x^2} +2\rho\sigma \frac{\partial^2}{\partial x \partial y}+\sigma^2 \frac{\partial^2}{\partial y^2}\right)
+\left(r-\delta-\frac{f_n^2(y)} 2 +2\rho\sigma f_n(y)f_n'(y)\right) \frac{\partial }{\partial x} \\&\qquad +\left( \kappa\left(\theta-f_n^2(y)\right)+\sigma^2f_n(y)f'_n(y)\right)\frac{\partial }{\partial y}-2\kappa f_n(y)f_n'(y) + \zeta_\varepsilon'(u^n_\varepsilon-\psi)       -(r-\delta).
\end{align*}

By using the  Comparison principle, we deduce that, if $ f_n(y)f_n'(y) \left(\frac{\partial^2 u}{\partial x^2}-\frac{\partial u}{\partial x}\right)\geq 0 $, then $\bar u\geq 0$ and the assertion follows letting $\varepsilon$ tend to 0.

Since  $f_n$ is positive and  nondecreasing, it is enough to prove that $\frac{\partial^2 u}{\partial x^2}-\frac{  \partial u}{\partial x}\geq 0$. 
We write the equations satisfied by $u'=\frac{  \partial u}{\partial x}$ and $u''=\frac{  \partial^2 u}{\partial x^2}$. We have
\begin{equation}\label{u'}
\begin{cases}
-\frac{\partial u'}{\partial t}-\tilde{\mathcal{A}}^n u'+\zeta'_\varepsilon(u-\psi)(u'-\psi')=0,\\
u(T)=\psi,
\end{cases}
\end{equation} 
and
\begin{equation}\label{u''}
\begin{cases}
-\frac{\partial u''}{\partial t}-\tilde{\mathcal{A}}^nu''+\zeta''_\varepsilon(u-\psi)(u'-\psi')^2+\zeta'_\varepsilon(u-\psi)(u''-\psi'')=0,\\
u''(T)=\psi''.
\end{cases} 
\end{equation}
Using \eqref{u'} and \eqref{u''}, we get that $u''-u'$ satisfies
\begin{equation}
\begin{cases}
-\frac{\partial (u''-u')}{\partial t
	}-\mathcal{A}^n(u''-u')+\zeta'_\varepsilon(u-\psi)(u''-u')=\zeta'_\varepsilon(u-\psi)(\psi''-\psi')-\zeta''_\varepsilon(u-\psi)(u'-\psi')^2,\\
u''(T)-u'(T)=\psi''-\psi'.
\end{cases} 
\end{equation}
Recall that $\psi''-\psi'\geq0$ by assumption and
that $\zeta_\varepsilon$ is increasing and concave. Then, 
$$
\zeta'_\varepsilon(u-\psi)(\psi''-\psi')-\zeta''_\varepsilon(u-\psi)(u'-\psi')^2\geq 0, \quad u''(T)-u'(T)=\psi''-\psi'\geq0,
$$
hence, by using again the Comparison principle, we deduce that $ u''-u'\geq0$ which concludes the proof.
\end{proof}
The proof of Theorem \ref{monotonie} is now almost immediate.
\begin{proof}[Proof of Theorem \ref{monotonie}]
Thanks to Proposition \ref{mon2}, the function $u^n$ is increasing in the $y$ variable for all $n\in\N$. 	Then, the assertion follows by using Proposition \ref{convP}.
	%Therefore, under the assumptions of Proposition \ref{mon2}, we have $\frac{\partial u }{\partial y}\geq 0$. Finally, it is enough to approximate the payoff function $\psi$ by a sequence $(\psi_m)_m $ of   
	\end{proof}
	
	\section{The American put price}\label{sect-put}
From now on we focus our attention on the standard put option with strike price $K$ and maturity $T$, that is we fix $\varphi(s)=(K-s)_+$ and we study the properties of the function
\begin{equation}\label{put-price}
P(t,s,y)=\sup_{\tau \in\mathcal T_{t,T}}\E[e^{-r(\tau-t )}(K-S^{t,s,y}_\tau)_+].
\end{equation} 
The following result easily follows from \eqref{put-price}.
\begin{proposition}\label{properties_P}
The price function $P$ satisfies:
	\begin{enumerate}
		\item $(t,s,y)\mapsto P(t,s,y)$ is continuous and positive;
		\item $t\mapsto P(t,s,y)$ is nonincreasing;
		\item $y\mapsto P(t,s,y)$ is nondecreasing;
		\item $s\mapsto P(t,s,y)$ is nonincreasing and convex.
	\end{enumerate}
\end{proposition}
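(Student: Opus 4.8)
The plan is to read the four assertions off the representation \eqref{put-price}, using the multiplicative structure of $S$ and the time-homogeneity of \eqref{hest}; the only real work is the continuity, and the monotonicity in $y$ is the one point that is reduced to Theorem \ref{monotonie}. Write $S^{t,s,y}_u=s\,\mathcal E^{t,y}_u$ with $\mathcal E^{t,y}_u=\exp\!\big((r-\delta)(u-t)-\tfrac12\int_t^uY^{t,y}_v\,dv+\int_t^u\sqrt{Y^{t,y}_v}\,dB_v\big)$, a positive process not depending on $s$; in particular the class $\mathcal T_{t,T}$ of stopping times in \eqref{put-price} does not depend on $s$. For the fourth assertion, fix $\tau\in\mathcal T_{t,T}$: pathwise, $s\mapsto(K-s\,\mathcal E^{t,y}_\tau)_+$ is nonincreasing and convex (positive part of an affine function of $s$ with nonpositive slope), and taking expectations and then the supremum over $\tau$ preserves both features. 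The second assertion follows from time-homogeneity: by the Markov property $P(t,s,y)=\sup_{\tau\in\mathcal T_{0,T-t}}\E_{s,y}[e^{-r\tau}(K-S_\tau)_+]$, a supremum over a family of stopping times that grows with $T-t$, so $t\mapsto P(t,s,y)$ is nonincreasing. Positivity is immediate from $(K-S^{t,s,y}_\tau)_+\ge0$.

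For the third assertion the payoff $\varphi(s)=(K-s)_+$ is convex but not $C^2$, so Theorem \ref{monotonie} does not apply directly. I would approximate $\varphi$ uniformly on $(0,\infty)$ by bounded convex functions $\varphi_m\in C^2$ agreeing with $K-s$ for $s\le K-1/m$ and with $0$ for $s\ge K+1/m$; then $\psi_m(x):=\varphi_m(e^x)$ is bounded, belongs to $W^{2,2}(\R,e^{-\gamma|x|})\cap C^2(\R)$, and satisfies $\psi_m''-\psi_m'=e^{2x}\varphi_m''(e^x)\ge0$, so Theorem \ref{monotonie} gives that the corresponding value function is nondecreasing in $y$. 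Since $\sup_{s>0}|\varphi_m(s)-(K-s)_+|\to0$, the associated prices converge uniformly to $P$, and monotonicity in $y$ passes to the limit.

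The substantial point is the continuity. Working with the log-price form $P(t,s,y)=\sup_{\tau\in\mathcal T_{0,T-t}}\E_{x,y}[e^{-r\tau}\psi(X_\tau)]$, $\psi(x)=(K-e^x)_+$, one first observes that $\psi$ is globally Lipschitz (with constant $K$), hence $|P(t,x',y')-P(t,x,y)|\le K\,\E\big[\sup_{u\le T}|X^{x',y'}_u-X^{x,y}_u|\big]$, a bound uniform in $t\in[0,T]$; the $L^1$-continuity of the flow $(x,y)\mapsto X^{x,y}_{\cdot}$ in the right-hand side follows from the continuity of the CIR flow $y\mapsto Y^{y}_{\cdot}$ (via the Yamada--Watanabe estimates behind pathwise uniqueness, as for Lemma \ref{lemmasup_diff}), the inequality $|\sqrt a-\sqrt b|^2\le|a-b|$, and the Burkholder--Davis--Gundy inequality; this gives continuity in $(s,y)$, uniformly in $t$. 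Continuity in $t$ for fixed $(s,y)$ is then obtained from the monotonicity just established plus a truncation argument: a near-optimal stopping time for one horizon, stopped at the endpoint of the other (shorter) horizon, is admissible there, and the resulting error is controlled by the expected oscillation of $u\mapsto e^{-ru}\psi(X_u)$ on a short interval, which vanishes by path-continuity, boundedness of $\psi$, and dominated convergence. Combining the two yields joint continuity. I expect this last step---continuity in the time variable, together with the fact that $\sqrt y$ is not Lipschitz in the $y$-continuity estimate---to be the only genuinely delicate part; everything else is a direct consequence of \eqref{put-price}.
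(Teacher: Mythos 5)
Your proof is correct and follows essentially the same route as the paper: assertions~2 and~4 are read off directly from the representation \eqref{put-price} and the multiplicative structure $S^{t,s,y}_u = s\,\mathcal E^{t,y}_u$, assertion~3 is reduced to Theorem~\ref{monotonie} via a $C^2$ convex approximation of $(K-\cdot)_+$ (the paper cites a density lemma from \cite{JLL} where you construct the mollified approximants explicitly; this is a cosmetic difference), and assertion~1 is treated as ``classical'' in the paper while you fill in the standard flow-continuity and horizon-truncation arguments. One small point worth tightening: you assert positivity from $(K-S^{t,s,y}_\tau)_+\ge 0$, but that only gives $P\ge 0$; to get strict positivity, bound $P$ from below by the European price $\E[e^{-r(T-t)}(K-S^{t,s,y}_T)_+]$ and note that $\P(S^{t,s,y}_T<K)>0$, so the expectation is strictly positive.
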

\begin{proof}
	The proofs of $1.$ and $2.$ are classical and straightforward.  As regards $3.$, we
 note that $\varphi$ is convex and  the function $\psi(x)=(K-e^x)_+$ belongs to the space $W^{1,2}(\R,e^{-\gamma|x|})$ for a $\gamma>1$ but it is not regular enough to  apply  Proposition \ref{monotonie}. However, we can use an approximation procedure. Indeed, thanks to density results and \cite[Lemma 3.3]{JLL}, we can  approximate the function $\psi$ with a sequence of functions $\psi_n\in W^{2,2}(\R,e^{-\gamma|x|})\cap C^2(\R)$ such that $\psi_n''-\psi_n'\geq0$, so the assertion easily follows passing to the limit.   $4.$  follows from the fact that $\varphi(s)=(K-s)_+$ is nonincreasing and convex.
	\end{proof}

Moreover, thanks to  the Lipschitz continuity of the payoff function, we have the following result.
	\begin{proposition}	\label{prop-lipschitz}
		The function $x \mapsto u(t,x,y)$ is Lipschitz continuous while the function $y \mapsto u(t,x,y)$ is Holder continuous. If $2\kappa\theta \geq \sigma^2$ the function $y\mapsto u(t,x,y)$ is locally Lipschitz continuous on $(0,\infty)$. 
	\end{proposition}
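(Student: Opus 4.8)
The plan is to work with the representation
$$u(t,x,y)=\sup_{\tau\in\mathcal T_{t,T}}\E_{x,y}[e^{-r(\tau-t)}\psi(X_\tau)],\qquad \psi(x)=(K-e^x)_+,$$
and to exploit that $\psi$ is bounded by $K$ and $K$-Lipschitz, together with the elementary estimate $|\sup_\tau a_\tau-\sup_\tau b_\tau|\le\sup_\tau|a_\tau-b_\tau|$. This reduces all three assertions to stability estimates for the flow of \eqref{hest2} with respect to its initial data, in the form
$$|u(t,x_1,y_1)-u(t,x_2,y_2)|\le K\,\E\Big[\sup_{s\in[t,T]}|X^{t,x_1,y_1}_s-X^{t,x_2,y_2}_s|\Big].$$

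First, the $x$-regularity is immediate: in \eqref{hest2} the coefficients of $X$ depend on $(X,Y)$ only through $Y$, so $X^{t,x_1,y}_s-X^{t,x_2,y}_s\equiv x_1-x_2$ and hence $|u(t,x_1,y)-u(t,x_2,y)|\le K|x_1-x_2|$. For the $y$-regularity I would use the comparison theorem for the CIR equation (pathwise uniqueness holds by Yamada--Watanabe, cf.\ \cite{IW}, so $y_1\ge y_2$ implies $Y^{t,y_1}_s\ge Y^{t,y_2}_s$ a.s.) together with $\E[Y^{t,y}_s]=\theta+(y-\theta)e^{-\kappa(s-t)}$. Writing $Z_s:=Y^{t,y_1}_s-Y^{t,y_2}_s\ge 0$ for $y_1\ge y_2$, the Brownian part of $Z$ is a true martingale (it is $L^2$-bounded since $Y^{t,y}$ has finite first moment), so $\E[Z_s]=(y_1-y_2)e^{-\kappa(s-t)}\le y_1-y_2$. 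Since, from \eqref{hest2},
$$X^{t,x,y_1}_s-X^{t,x,y_2}_s=-\tfrac12\int_t^s Z_u\,du+\int_t^s(\sqrt{Y^{t,y_1}_u}-\sqrt{Y^{t,y_2}_u})\,dB_u,$$
the bound $(\sqrt a-\sqrt b)^2\le|a-b|$, the Burkholder--Davis--Gundy and Jensen inequalities and the estimate on $\E[Z_u]$ give
$$\E\Big[\sup_{s\in[t,T]}|X^{t,x,y_1}_s-X^{t,x,y_2}_s|\Big]\le C_T\,(|y_1-y_2|+|y_1-y_2|^{1/2}).$$
Combining this with $0\le u\le K$ (to absorb the range $|y_1-y_2|\ge 1$) yields the $\tfrac12$-H\"older continuity of $y\mapsto u(t,x,y)$.

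Under the Feller condition $2\kappa\theta\ge\sigma^2$ the point is to sharpen the control of $\sqrt{Y^{t,y_1}_u}-\sqrt{Y^{t,y_2}_u}$. Then, for $y>0$, $Y^{t,y}_s>0$ for all $s\in[0,T]$, so It\^o's formula applies to $R^i_s:=\sqrt{Y^{t,y_i}_s}$ and gives
$$dR^i_s=\Big(\frac{4\kappa\theta-\sigma^2}{8R^i_s}-\frac{\kappa}{2}R^i_s\Big)ds+\frac{\sigma}{2}\,dW_s .$$
The martingale parts coincide, so for $y_1\ge y_2$ the difference $D_s:=R^1_s-R^2_s\ge 0$ solves the pathwise linear ODE $\dot D_s=-D_s\big(\frac{4\kappa\theta-\sigma^2}{8R^1_sR^2_s}+\frac{\kappa}{2}\big)$, whose coefficient is nonnegative and a.s.\ bounded on $[0,T]$ (indeed $R^1_sR^2_s\ge Y^{t,y_2}_s\ge\min_{s\in[0,T]}Y^{t,y_2}_s>0$ a.s.), hence $0\le D_s\le D_0=\sqrt{y_1}-\sqrt{y_2}$. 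This gives the pathwise bound $|\sqrt{Y^{t,y_1}_u}-\sqrt{Y^{t,y_2}_u}|\le|\sqrt{y_1}-\sqrt{y_2}|$, and consequently $\E[|Y^{t,y_1}_u-Y^{t,y_2}_u|]\le|\sqrt{y_1}-\sqrt{y_2}|\,\E[\sqrt{Y^{t,y_1}_u}+\sqrt{Y^{t,y_2}_u}]$. Inserting both into the representation of $X^{t,x,y_1}_s-X^{t,x,y_2}_s$ above (Burkholder--Davis--Gundy for the stochastic integral, and $\sup_{s\le T}\E[\sqrt{Y^{t,y}_s}]\le(\max(y,\theta))^{1/2}$ for the drift) yields $\E[\sup_{s\in[t,T]}|X^{t,x,y_1}_s-X^{t,x,y_2}_s|]\le C_T|\sqrt{y_1}-\sqrt{y_2}|$; since $|\sqrt{y_1}-\sqrt{y_2}|\le|y_1-y_2|/(2\sqrt a)$ for $y_1,y_2\in[a,b]\subset(0,\infty)$, the map $y\mapsto u(t,x,y)$ is Lipschitz on every compact subset of $(0,\infty)$.

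The crux is this last step. The drift term of $X^{t,x,y_1}-X^{t,x,y_2}$ is automatically Lipschitz in $y$ because $\E[|Z_u|]\le|y_1-y_2|$, but the stochastic-integral term has quadratic variation built from $(\sqrt{Y^{t,y_1}_u}-\sqrt{Y^{t,y_2}_u})^2$, for which at the CIR level one only has the estimate $\le|Y^{t,y_1}_u-Y^{t,y_2}_u|$, and Burkholder--Davis--Gundy then yields merely an $O(|y_1-y_2|^{1/2})$ bound. Obtaining Lipschitz regularity therefore genuinely requires the sharper \emph{pathwise} estimate on $\sqrt{Y^{t,y_1}_u}-\sqrt{Y^{t,y_2}_u}$, which relies on the $R=\sqrt{Y}$ change of variables being admissible --- hence on the Feller condition keeping $Y$ strictly positive --- and on the exact cancellation of the Brownian increments of $R^1$ and $R^2$; without Feller neither ingredient is available, which is why only H\"older continuity can be obtained in general.
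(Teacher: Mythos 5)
Your proposal is correct and follows the same overall strategy as the paper's proof: reduce everything to the flow-stability bound $|u(t,x_1,y_1)-u(t,x_2,y_2)|\le C\,\E[\sup_{s\in[t,T]}|X^{t,x_1,y_1}_s-X^{t,x_2,y_2}_s|]$, control the drift via $\E|Y^{t,y_1}_u-Y^{t,y_2}_u|\le|y_1-y_2|$ (comparison plus the explicit mean), control the stochastic integral via Burkholder--Davis--Gundy together with $(\sqrt a-\sqrt b)^2\le|a-b|$ in the general case, and sharpen the stochastic-integral estimate under Feller by the Lamperti change of variables $R=\sqrt Y$. The one genuine variation is how the pathwise inequality $|\sqrt{Y^{t,y_1}_u}-\sqrt{Y^{t,y_2}_u}|\le|\sqrt{y_1}-\sqrt{y_2}|\le|y_1-y_2|/(2\sqrt a)$ is obtained: the paper differentiates the flow $y\mapsto\sqrt{Y^y_u}$ with respect to the initial condition, obtaining $\dot Y^y_u/(2\sqrt{Y^y_u})\le 1/(2\sqrt y)$ a.s.\ (citing \cite{Oa} for the differentiation), and then integrates in $y$; you instead compare two fixed solutions $R^1,R^2$ directly, observe that the Brownian increments cancel exactly in the Lamperti dynamics, and read off from the resulting random linear ODE (with nonnegative coefficient thanks to $4\kappa\theta-\sigma^2\ge 0$) that $D=R^1-R^2$ is nonincreasing, hence $0\le D_s\le\sqrt{y_1}-\sqrt{y_2}$. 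The two derivations give the same estimate, but yours is slightly more self-contained since it sidesteps any discussion of differentiability of the flow in its initial condition. Everything else -- including the remark that the Lipschitz-in-$x$ part is immediate from translation invariance of the $X$-flow, and the diagnosis that it is precisely the stochastic integral (not the drift) which forces only $\tfrac12$-H\"older regularity when Feller fails -- is consistent with the paper.
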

	\begin{proof}
	It is easy to  prove that, for every fixed $t\geq 0$ and $y,y'\geq 0$ with $y\geq y'$, 
	\begin{equation}\label{flowfory}
	\E\left[Y^{y}_t-Y^{y'}_t\right] \leq y-y'. 
	\end{equation}
	Moreover, recall that $\dot{Y}^y_t \geq0$, so that $Y^{y}_t\geq Y^{y'}_t$ (see  \cite[Chapter 9,Theorem 3.7]{RY}). Therefore, \eqref{flowfory} can be rewritten as 
		\begin{equation}\label{flow-y}
		\E\left[|Y^{y}_t-Y^{y'}_t|\right] \leq |y-y'|. 
		\end{equation}
		Then, for $(x,y), (x',y')\in\R\times [0,\infty)$ we have
		\begin{align*}
	&	|u(t,x,y)-u(t,x',y')| = \left|    \sup_{\tau \in\mathcal{T}_{t,T}}\E[e^{-r(\tau-t )}(K-e^{X^{t,x,y}_\tau})_+]-  \sup_{\tau \in\mathcal{T}_{t,T}}\E[e^{-r(\tau-t )}(K-e^{X^{t,x',y'}_\tau})_+] \right|\\&\quad
		\leq     \sup_{\tau \in \mathcal{T}_{t,T}}\left|  \E\Big[e^{-r(\tau-t )}(K-e^{X^{t,x,y}_\tau})_+-e^{-r(\tau-t )}(K-e^{X^{t,x',y'}_\tau})_+  \Big]  \right| \\
		&\quad\leq C  \E \left[  \sup_{u \in [t,T]}|X_u^{t,x,y}      -X_u^{t,x',y'}| \right] \\
		&\quad \leq C\left( |x-x'|+\int_t^T\E [|Y^{t,y}_u-Y^{t,y'}_y|]du + \E\left[ \sup_{s\in[t,T]} \left|  \int_t^s (\sqrt{Y^{t,y}_u}-\sqrt{Y^{t,y'}_u})dW_u   \right| \right]\right)\\
		& \quad \leq C\left( |x-x'|+\int_t^T\E[ |Y^{t,y}_u-Y^{t,y'}_y|]du +\left(\E\left[ \sup_{s\in[t,T]} \left|  \int_t^s  (\sqrt{Y^{t,y}_u}-\sqrt{Y^{t,y'}_u})dW_u   \right| \right]^2\right)^{\frac 1 2} \right)\\
			&\quad \leq  C\left( |x-x'|+\int_t^T\E[|Y^{t,y}_u-Y^{t,y'}_u| ] du +\left(\E\left[  \int_t^T |Y^{t,y}_u-Y^{t,y'}_u|du    \right]\right)^{\frac 1 2} \right)\\
			&\quad \leq C_T(|x-x'|+\sqrt{|y-y'|}).
				\end{align*}
					Now, recall that, if $2\kappa \theta \geq \sigma^2$, the  volatility process $Y$ is strictly positive so we can apply It\^o's Lemma to the square root  function   and the process $Y_t$ in the open set $(0,\infty)$. We get
		\begin{align*}
		\sqrt{Y_t^y}&= \sqrt{y}+ \int_0^t \frac{1}{2\sqrt{Y_u^y}}dY^y_u - \frac 1 2 \int_0^t
		\frac{1}{4(Y^y_u)^{\frac 3 2 }}\sigma^2 Y_u^y du\\
		&=\sqrt{y}+ \left(  \frac{\kappa \theta}{2} - \frac{\sigma^2}{8} \right)  \int_0^t \frac{1}{\sqrt{Y_u^y}}du  - \frac \kappa 2 \int_0^t \sqrt{Y^y_u}du + \frac \sigma 2 W_t.
		\end{align*}
As already proved in \cite{Oa},	differentiating with respect to $y$,  one has
	\begin{equation}\label{stimaperholder}		\begin{split}
		\frac{\dot{Y_t^y}}{2\sqrt{Y_t^y}}  & = \frac{1}{2\sqrt{y}} + \left(  \frac{\kappa \theta}{2} - \frac{\sigma^2}{8} \right)  \int_0^t -\frac{\dot{Y_u^y}}{2(Y_u^y)^{\frac 3 2 }}du - \frac \kappa 2 \int_0^t \frac{\dot{Y_u^y}}{2\sqrt{Y_u^y}} du \leq \frac{1}{2\sqrt{y}}, \qquad a.s. 
		\end{split}
		\end{equation}
		since $\kappa\theta \geq \sigma^2/2\geq \sigma^2/4$ and $Y^y_t>0, \ \dot{Y}^y_t \geq0$. 
		
		Therefore, let us consider $y,y' \geq a$. Repeating the same calculations as before
		\begin{align*}
	&	|u(t,x,y)-u(t,x,y')| \\\quad
%		&= \left|    \sup_{\theta \in[t,T]}\E[e^{-r(\theta-t )}(K-S^{t,s,y}_\theta)_+]-  \sup_{\theta \in[t,T]}\E[e^{-r(\theta-t )}(K-S^{t,s,y'}_\theta)_+] \right|\\&
%		\leq \left|      \sup_{\theta \in[t,T]}\E\Big[e^{-r(\theta-t )}(K-S^{t,s,y}_\theta)_+-e^{-r(\theta-t )}(K-S^{t,s,y'}_\theta)_+  \Big]  \right| \\
%		&\leq   \E \left[  \sup_{u \in [t,T]}|S_u^{0,s,y}      -S_u^{t,s,y'}| \right]  \\
%		& \leq C\left(\int_0^T\E|Y^y_u-Y^{y'}_u| du +\E\left[ \sup_{t\in[0,T]} \left|  \int_0^t  (\sqrt{Y^y_u}-\sqrt{Y^{y'}_u}) dW_u   \right| \right]\right)\\
		&\quad\leq C\left(\int_t^T\E[|Y^{t,y}_u-Y^{t,y'}_u|]du +\left(\E\left[ \sup_{s\in[t,T]} \left|  \int_t^s  (\sqrt{Y^{t,y}_u}-\sqrt{Y^{t,y'}_u})dW_u   \right| \right]^2\right)^{\frac 1 2} \right)\\
	&	\quad\leq  C\left(\int_t^T\E[|Y^{t,y}_u-Y^{t,y'}_u|  ]du +\left(\E\left[  \int_t^T (\sqrt{Y^{t,y}_u}-\sqrt{Y^{t,y'}_u})^2du    \right]\right)^{\frac 1 2} \right)\\
		&\quad =  C\left(\int_t^T\E[|Y^{t,y}_s-Y^{t,y'}_s |]du +\left(\E\left[  \int_t^Tdu \left(\int_{y}^{y'}   \frac{\dot{Y}_u^{t,w}}{2\sqrt{Y_u^{t,w}}} dw  \right)^2  \right]\right)^{\frac 1 2} \right)\\
		&\quad \leq C_T\left(   |y-y'| + \left( \E\left[  \int_t^T \left(   \frac{1}{2\sqrt{a}} |y-y'|  \right)^2du  \right]\right)^{\frac 1 2}    \right)\\
		&\quad \leq C_T|y-y'|,
		\end{align*}
		which completes the proof. 
	\end{proof}
	\begin{remark}
Studying the properties of the put price also clarifies the behaviour of the call price since it is straightforward to extend to the Heston model the  symmetry relation between call and put prices. In fact, let us   highlight the dependence of the prices with respect to the  parameters $K,r,\delta,\rho$, that is let us write
$$
P(t,x,y;K,r,\delta,\rho)=\sup_{\tau\in\mathcal{T}_{t,T}}\E[e^{-r(\tau-t)}(K-S^{t,s,y}_\tau)_+],
$$
for the put option price and
$$ C(t,s,y;K,r,\delta,\rho)=\sup_{\tau\in\mathcal{T}_{t,T}}\E[e^{-r(\tau-t)}(S^{t,s,y}_\tau-K)_+],
$$
for the call option. Then, we have $C(t,s,y;K,r,\delta,\rho)=P(t,K,y;x,\delta,r,-\rho)$.

In fact, for every $\tau \in \mathcal{T}_{t,T}$, we have
\begin{align*}
&\E e^{-r(\tau-t)}  \bigg( se^{\int_t^\tau\left( r-\delta -\frac{Y^{t,y}_s}2\right)ds+\int_t^\tau \sqrt{Y^{t,y}_s}dB_s}-K\bigg)_+  \\&\qquad=\E e^{-\delta(\tau-t)}e^{\int_t^\tau \sqrt{Y^{t,y}_s}dB_s-\int_t^\tau\frac{Y^{t,y}_s}2ds } \bigg(  x-Ke^{\int_t^\tau\left(\delta-r +\frac{Y^{t,y}_s}2\right)ds -\int_t^\tau dB_s  } \bigg)_+  \\&\qquad=\E e^{-\delta(\tau-t)}e^{\int_t^T \sqrt{Y^{t,y}_s}dB_s-\int_t^T\frac{Y^{t,y}_s}2ds } \bigg(  x-Ke^{\int_t^\tau\left(\delta-r +\frac{Y^{t,y}_s}2\right)ds -\int_t^\tau  \sqrt{Y^{t,y}_s}dB_s  } \bigg)_+ , 
\end{align*}
where the last equality follows from the fact that $(e^{\int_t^s \sqrt{Y^{t,y}_s}dB_s-\int_t^s\frac{Y^{t,y}_s}2ds })_{s\in[t,T]}$ is a martingale. Then, note that the process  $\hat B_t= B_t-\sqrt{Y^{t,y}_t }t$ is a Brownian motion under the  probability measure $\hat P$ which has density $d\hat{\P}/d\P=e^{\int_t^T \sqrt{Y^{t,y}_s}dB_s-\int_t^T\frac{Y^{t,y}_s}2ds }$. Therefore
$$
\E e^{-r(\tau-t)}  \bigg( se^{\int_t^\tau\left( r-\delta -\frac{Y^{t,y}_s}2\right)ds+\int_t^\tau \sqrt{Y^{t,y}_s}dB_s}-K\bigg)_+ =\hat \E e^{-\delta (\tau-t)} \bigg(  x-Ke^{\int_t^\tau\left(\delta-r -\frac{Y^{t,y}_s}2\right)ds -\int_t^\tau \sqrt{Y^{t,y}_s}dB_s  } \bigg)_+.
$$
Under the probability $\hat{\P}$, the process $(-\hat B,W )$ is a Brownian motion with correlation coefficient $-\rho$ so that the assertion follows.
	\end{remark}
	 \subsection{The  exercise boundary}
	 Let us introduce the so called continuation region
	 $$
\mathcal C=\{ (t,s,y) \in [0,T)\times (0,\infty)\times[0,\infty) :P(t,s,y)>\varphi(s)  \}
	 $$
	 and its complement, the exercise region
	 $$
	 \mathcal E= \mathcal C^c=\{ (t,s,y) \in [0,T)\times (0,\infty)\times[0,\infty) :P(t,s,y)=\varphi(s)\}.
	 $$
	 Note that, since $P$ and $\varphi$ are both continuous,  $\mathcal C$ is an (relative) open set while  $\mathcal E$ is a closed set.

	 Generalizing the standard definition given in the Black and Scholes  type models,  we consider the \textit{critical exercise price} or \textit{free exercise boundary}, defined as
	 	$$
	 	b(t,y)=\inf\{ s>0| P(t,s,y)>(K-s)_+\},\qquad (t,y)\in [0,T)\times [0,\infty).
	 	$$
	 	We have $P(t,s,y)=\varphi(s)$  for $s\in [0,b(t,y))$ and also for $s= b(t,y)$, due to the continuity of $P$ and $\varphi$.	%Note also that, since $t\mapsto P(t,s,y)$ is nonincreasing and $y\mapsto P(t,s,y)$ is nondecreasing, we have that $t\mapsto b(t,y)$ is nondecreasing and $y\mapsto b(t,y)$ is nonincreasing. 
	 	Note also that,  since $P>0$, we have $b(t,y) \in [0,K)$. 
	 	Moreover, since $P$ is convex, we can write
	 	$$
	 	\mathcal C=\{(t,s,y) \in [0,T)\times (0,\infty)\times[0,\infty) :  s>b(t,y)    \}
	 	$$
	 	and
	 	$$
	 	\mathcal E=\{(t,s,y) \in [0,T)\times (0,\infty)\times[0,\infty) :  s\leq b(t,y)    \}.
	 	$$

	% \subsection{Positivity of the critical price}

 We now study some properties of the free boundary  $b:[0,T)\times [0,\infty)\rightarrow [0,K)$.  First of all, we have the following simple result.
	 \begin{proposition}
	 	We have:
	 	\begin{enumerate}
	 		\item 	for every fixed $y\in[0,\infty)$, the function $t\mapsto b(t,y)$ is nondecreasing and right continuous;
	 		\item  for every fixed $t\in[0,T)$, the function $y\mapsto b(t,y)$ is nonincreasing and  left continuous.
	 	\end{enumerate}
	 \end{proposition}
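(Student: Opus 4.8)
The plan is to derive all four assertions (monotonicity and one-sided continuity in each variable) from the already-established properties of $P$ in Proposition \ref{properties_P}, namely that $t\mapsto P(t,s,y)$ is nonincreasing, $y\mapsto P(t,s,y)$ is nondecreasing, and $s\mapsto P(t,s,y)$ is convex and nonincreasing, together with the characterization $\mathcal E=\{s\le b(t,y)\}$, $\mathcal C=\{s>b(t,y)\}$. Throughout I will use the defining fact that $P(t,s,y)=(K-s)_+$ for $s\le b(t,y)$ and $P(t,s,y)>(K-s)_+$ for $s>b(t,y)$.

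For the monotonicity statements. Fix $s,y$ and $t_1\le t_2$. If $s>b(t_2,y)$, then $P(t_2,s,y)>(K-s)_+$; since $P(t_1,s,y)\ge P(t_2,s,y)$ by item 2 of Proposition \ref{properties_P}, also $P(t_1,s,y)>(K-s)_+$, i.e. $s>b(t_1,y)$. Thus $\{s:s>b(t_2,y)\}\subseteq\{s:s>b(t_1,y)\}$, which forces $b(t_1,y)\le b(t_2,y)$; hence $t\mapsto b(t,y)$ is nondecreasing. The argument for $y\mapsto b(t,y)$ is identical but with the inequality reversed: if $y_1\le y_2$ and $s>b(t,y_1)$ then $P(t,s,y_1)>(K-s)_+$, and since $P(t,s,y_2)\ge P(t,s,y_1)$ by item 3, $s>b(t,y_2)$; so $b(t,y_2)\le b(t,y_1)$, i.e. $y\mapsto b(t,y)$ is nonincreasing.

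For the one-sided continuity. A monotone function is automatically continuous except at countably many jumps, and the direction of one-sided continuity is dictated by the direction of monotonicity together with the closedness of $\mathcal E$ (equivalently, openness of $\mathcal C$). Consider $t\mapsto b(t,y)$, nondecreasing; I want right continuity. Let $t_n\downarrow t$; then $b(t_n,y)\downarrow\ell\ge b(t,y)$ exists. For $s<\ell$ we have $s<b(t_n,y)$ for $n$ large, so $(t_n,s,y)\in\mathcal E$; since $\mathcal E$ is closed and $(t_n,s,y)\to(t,s,y)$, we get $(t,s,y)\in\mathcal E$, i.e. $s\le b(t,y)$. Letting $s\uparrow\ell$ gives $\ell\le b(t,y)$, hence $\ell=b(t,y)$: right continuity in $t$. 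For $y\mapsto b(t,y)$, nonincreasing, I want left continuity: let $y_n\uparrow y$; then $b(t,y_n)\downarrow\ell\ge b(t,y)$, and for $s<\ell$ we have $s<b(t,y_n)$ for $n$ large, so $(t,s,y_n)\in\mathcal E$, and closedness of $\mathcal E$ gives $(t,s,y)\in\mathcal E$, i.e. $s\le b(t,y)$; letting $s\uparrow\ell$ yields $\ell=b(t,y)$, so $b(\cdot)$ is left continuous in $y$.

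The only genuinely delicate point is checking that the limits extracted above really are $\le b$ (not strict), and this is where the closedness of the exercise region does all the work; one must be slightly careful that $b$ is finite-valued — but this is already recorded in the excerpt ($b(t,y)\in[0,K)$) — so no edge case at $s=\infty$ arises, and the case $\ell=0$ is trivial since $b\ge0$ always. I expect no real obstacle here; the proposition is a soft consequence of monotonicity of $P$ plus closedness of $\mathcal E$, and the main thing to get right is matching each monotonicity direction with the correct one-sided continuity and invoking $\mathcal E$ closed rather than $\mathcal C$ open at the right moment.
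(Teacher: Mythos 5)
Your proof is correct and takes essentially the same route as the paper: the monotonicity statements come from monotonicity of $P$ in $t$ and $y$, and the one-sided continuity from a limit extracted by monotonicity together with the fact that the exercise region is closed. The paper phrases the closedness step directly via continuity of $P$ and $\varphi$ (evaluating $P$ at the boundary points $b(t_n,y)$ and passing to the limit), whereas you pass through interior points $s<\ell$ and then let $s\uparrow\ell$, but these are the same argument up to a thin reformulation.
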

	 \begin{proof}
	 	$1.$ Recalling that the map $t\mapsto P(t,s,y)$ is nonincreasing, we directly deduce that $t\mapsto b(t,y)$  is nondecreasing.
	 	Then, fix $t\in [0,T)$ and let $(t_n)_{n \geq 1}$ be a decreasing sequence such that $\lim_{n \rightarrow \infty} t_n = t$. The sequence $(b(t_n,y))_n$ is nondecreasing so that $ \lim_{n \rightarrow \infty}b(t_n,y)$ exists and we have $  \lim_{n \rightarrow \infty}b(t_n,y) \geq b(t,y)$.
	 		 	On the other hand, we have		
	 	\begin{equation*}
	 	P(t_n,b(t_n,y),y)= \varphi(b(t_n,y)) \qquad  n \geq 1,
	 	\end{equation*}
	 	and, by the continuity of $P$ and $\varphi$,
	 	\begin{equation*}
	 	P(t,\lim_{n \rightarrow \infty}b(t_n,y),y)= \varphi(\lim_{n \rightarrow \infty}b(t_n,y)).
	 	\end{equation*}
	 	We deduce by the definition of $b$ that $  \lim_{n \rightarrow \infty}b(t_n,y) \leq b(t,y)$ which concludes the proof.
	 	
	 	$2.$  The second assertion can be proved with the same arguments, this time recalling that $y\mapsto P(t,s,y)$ is a nondecreasing function.
	 \end{proof}
	 Recall that $b(t,y) \in [0,K)$.   Indeed, we can prove  the positivity of the function.
		\begin{proposition}\label{positivity}
			 We have $b(t,y)>0$ for every $(t,y) \in [0,T)\times [0,\infty)$.
		\end{proposition}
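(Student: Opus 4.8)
The plan is to show that, for $(t,y)$ fixed, immediate exercise is optimal as soon as the spot $s$ is small enough. Since, by the convexity of $s\mapsto P(t,s,y)$, one has $\mathcal E=\{s\le b(t,y)\}$, it is enough to exhibit a single $s_\star>0$ with $P(t,s_\star,y)=(K-s_\star)_+$; this forces $b(t,y)\ge s_\star>0$. The key elementary remark is that on $\{s<K\}$ the obstacle $\varphi(s)=(K-s)_+$ coincides with the affine (hence smooth) function $K-s$, and a direct computation from \eqref{L} gives $(\mathcal L-r)(K-s)=\delta s-rK$, which is strictly negative for $s$ in a neighbourhood of $0$ (for all $s<K$ when $\delta=0$, and for $s<rK/\delta$ otherwise). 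So near $s=0$ the payoff is a strict supersolution of the pricing operator, which morally says that one should stop at once.

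To make this rigorous I would proceed probabilistically. Fix $\eta\in(0,K)$ with $\delta\eta<rK$, take $s<\eta$, let $\tau\in\mathcal T_{t,T}$ be arbitrary and set $\rho=\inf\{u\ge t:\ S^{t,s,y}_u\ge\eta\}\wedge T$. Applying It\^o's formula to $u\mapsto e^{-r(u-t)}(K-S_u)$ on $[t,\tau\wedge\rho]$ — licit because $K-s$ is smooth and $S$ stays in $[0,\eta]$ there — and taking expectations (the stochastic integral is a true martingale, its integrand being bounded on $[t,\tau\wedge\rho]$) gives
\[
\E\big[e^{-r(\tau\wedge\rho-t)}(K-S_{\tau\wedge\rho})\big]=(K-s)-\E\Big[\int_t^{\tau\wedge\rho}e^{-r(u-t)}(rK-\delta S_u)\,du\Big]\ \le\ (K-s)-(rK-\delta\eta)\,\E\Big[\int_t^{\tau\wedge\rho}e^{-r(u-t)}du\Big].
\]
On $\{\tau<\rho\}$ the reward $e^{-r(\tau-t)}(K-S_\tau)_+$ equals exactly $e^{-r(\tau\wedge\rho-t)}(K-S_{\tau\wedge\rho})$, and the same holds on $\{\rho=T\}$; on the remaining event $\{\rho\le\tau,\ \rho<T\}$, where $S_\rho=\eta$ by path continuity, I would bound the reward via the strong Markov property at $\rho$ together with the crude estimate $P\le K$ (the payoff is bounded by $K$), so that the extra contribution is at most $K\,\E\big[e^{-r(\rho-t)}\mathbf 1_{\{\rho\le\tau,\ \rho<T\}}\big]$. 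Now, by \eqref{hest2} the process $\log S^{t,s,y}_u-\log s$ does not depend on $s$ and has an a.s.\ finite supremum on $[t,T]$, so
\[
\P(\rho<T)\ \le\ \P\Big(\sup_{u\in[t,T]}\big(\log S^{t,s,y}_u-\log s\big)\ge\log(\eta/s)\Big)\ \xrightarrow[s\downarrow0]{}\ 0 ;
\]
balancing this vanishing overshoot against the strictly negative It\^o remainder above would yield $P(t,s,y)\le K-s$, hence $P(t,s,y)=(K-s)_+$, for $s$ small.

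The delicate point — and the step I expect to be the real obstacle — is precisely this balancing: one must show that the overshoot term is not merely small but is actually \emph{dominated} by the negative It\^o remainder, \emph{uniformly in} $\tau$. This requires a quantitative control of the law of $\rho$ showing that, conditionally on $\{\rho<T\}$, the hitting time $\rho$ cannot concentrate near $t$ as $s\downarrow0$ (so that $e^{-r(\rho-t)}$ stays bounded away from $1$ there), which is a genuine estimate rather than a formality. A PDE-flavoured alternative sidesteps it: were $b(t_0,y_0)=0$, then, since $b(\cdot,\cdot)$ is nonincreasing in $y$ and nondecreasing in $t$, a full time-slab around $(t_0,\cdot,y_0)$ would lie in $\mathcal C$, where $P$ solves $\partial_tP+\mathcal LP-rP=0$ by \cite{LT}; letting $s\downarrow0$ and using $P(t,0,y)\equiv K$, so that all derivatives of $P$ and the non-zeroth-order part of $\mathcal LP$ vanish in the limit, would force $-rK=0$, contradicting $r,K>0$. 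In this variant the subtle ingredient is the regularity of $P$ up to $\{s=0\}$ needed to pass to the limit in the equation.
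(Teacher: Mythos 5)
Your probabilistic plan has a genuine gap, which you yourself flag: with the crude bound $P\le K$ at the hitting time $\rho$ of the level $\eta$, what one obtains is
$P(t,s,y)-(K-s)\le \sup_{\tau}\Big\{-\,(rK-\delta\eta)\,\E\!\int_t^{\tau\wedge\rho}e^{-r(u-t)}du
+\eta\,\E\big[e^{-r(\rho-t)}\mathbf 1_{\{\rho\le\tau,\,\rho<T\}}\big]\Big\}$,
and the bracket is \emph{not} nonpositive for all $\tau$: on the event $\{\rho$ close to $t\}$, which has small but positive probability, $e^{-r(\rho-t)}\approx 1$ while $1-e^{-r(\rho-t)}\approx 0$, so any $\tau$ that stops right after $\rho$ makes the bracket positive there. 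One therefore only gets $P(t,s,y)-(K-s)\le\text{(small positive)}$, which is vacuous since the left side is already $\ge 0$. Closing this requires a quantitative estimate on the law of $\rho$ near $t$ in the Heston model (where the volatility is unbounded), and the step is not a formality — as written the argument does not conclude.

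Your PDE alternative is exactly the paper's strategy in spirit — assume $b(t^*,y^*)=0$, use the monotonicities of $b$ in $t$ and $y$ to get a slab $(0,t^*)\times(0,\infty)\times(y^*,\infty)\subset\mathcal C$, and derive a contradiction from $(\mathcal L-r)(K-s)=\delta s-rK$ as $s\downarrow0$ — but the mechanism you are missing is how to pass to the limit \emph{without} any boundary regularity of $P$. The paper never uses the equation or $C^{1,2}$-regularity up to $\{s=0\}$: from $\partial_t P\le 0$ on the slab it only extracts the distributional inequality $(\tilde{\L}-r)u\ge 0$, tests it against $\phi(x)=\lambda\chi_1(\lambda x)$, $\zeta(y)=\sqrt\lambda\,\chi_2(\sqrt\lambda(y-y^*))$, and sends $\lambda\downarrow 0$. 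After integration by parts, every contribution of $\tilde\L_0 u$ vanishes because $u$ is \emph{bounded} and the anisotropic scaling makes the transferred derivatives of the test function small, while the one term that cannot be integrated by parts, $-\kappa\int y\,\partial_y u\,\phi\zeta$, is $\le 0$ because $u$ is nondecreasing in $y$ — this is where Theorem \ref{monotonie} (via Proposition \ref{properties_P}) enters as an essential, not cosmetic, ingredient. Meanwhile $r\int(K-e^x)\phi\zeta\to rK>0$, giving the contradiction. Your sketch invokes neither the distributional/anisotropic test-function device nor the monotonicity of $u$ in $y$, and these are precisely what make the limiting argument rigorous.
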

		\begin{proof}
			Without loss of generality we can assume that $0<t<T$, since $T$ is arbitrary and the put price is a function of $T-t$.  Suppose that $b(t^*,y^*)=0$ for some $(t^*,y^*) \in (0,T)\times [0,	\infty) $. Since $b(t,y)\geq 0$, $t \mapsto b(t,y)$ is nondecreasing and $y \mapsto b(t,y)$ is nonincreasing, we have $ b(t,y)=0$ for $(t,y) \in (0,t^*) \times (y^*,\infty)$, so that
			\begin{equation*}
			 P(t,s,y)> \varphi(s),\qquad (t,s,y) \in (0,t^*) \times(0,\infty) \times (y^*,\infty).
			\end{equation*}
		To simplify the calculations, we pass  to the logarithm in the space variable and we consider the functions $u(t,x,y)= P(t,e^x,y)$ and $\psi(x)=\varphi(e^x) $. We  have $u(t,x,y)>\psi(x)$ and $$(\partial_t+\tilde{\mathcal{L}}-r)u=0 \qquad \mbox{ on } (0,t^*) \times \R \times (y^*,\infty),$$ where $\tilde \L$ was defined  in \eqref{Llog}. Since $t \mapsto u(t,x,y)$ is nondecreasing, we deduce that, for $t\in (0,t^*)$, $(\tilde \L-r)u=-\partial_t u\geq 0$ in the sense of distributions. Therefore, for any nonnegative and $C^\infty$ test functions $\theta$, $\phi$ and $\zeta$ which have support respectively in $(0,t^*)$, $(-\infty,\infty)$ and $(y^*,\infty)$, we have 
				\begin{equation*}
			\int_0^{t^*}\theta(t)dt \int_{-\infty}^\infty dx \int_{y^*}^\infty  \tilde{ \mathcal{L} }u(t,x,y) \phi(x)\zeta(y)dy \geq r 	\int_0^{t^*}\theta(t)dt \int_{-\infty}^\infty dx \int_{y^*}^\infty  (K-e^x)\phi(x)\zeta(y)dy,
			\end{equation*}
		or equivalently, by the continuity of the integrands in $t$, 			
			\begin{equation} \label{distr}
			\int_{-\infty}^\infty dx \int_{y^*}^\infty \tilde {\mathcal{L}} u(t,x,y) \phi(x)\zeta(y) dy \geq r 	 \int_{-\infty}^\infty dx \int_{y^*}^\infty  (K-e^x)\phi(x)\zeta(y) dy.
			\end{equation}
			Let $\chi_1$ and $\chi_2 $ be two nonnegative $C^\infty$ functions  such that $\supp\chi_1\subseteq [-1,0]$, $\supp\chi_2\subseteq [0,1]$ and $\int \chi_1(x)dx=\int \chi_2(x)dx=1$. Let us apply \eqref{distr} with $\phi(x)=\lambda \chi_1(\lambda x)$ and $\zeta(y)=\sqrt{\lambda} \chi_2(\sqrt{\lambda} (y-y^*))$, with $\lambda>0$. We will prove in a moment that
		\begin{equation}\label{pezzo1}
			\limsup_{\lambda \downarrow 0}	\int_{-\infty}^\infty dx \int_{y^*}^\infty \tilde {\mathcal{L}} u(t,x,y) \lambda \chi_1(\lambda x)\sqrt{\lambda} \chi_2(\sqrt{\lambda} (y-y^*))dy\leq 0
		\end{equation}
			and 
			\begin{equation}\label{pezzo2}
			\lim_{\lambda \downarrow 0}  r 	 \int_{-\infty}^\infty dx \int_{y^*}^\infty (K-e^x) \lambda \chi_1(\lambda x)\sqrt{\lambda} \chi_2(\sqrt{\lambda} (y-y^*))dy =rK>0,
			\end{equation}
			which contradicts \eqref{distr}, concluding the proof. 
			
			 As regards \eqref{pezzo2}, we have
				\begin{align*}
			r 	 \int_{-\infty}^\infty dx \int_{y^*}^\infty (K-e^x) \lambda \chi_1(\lambda x)\sqrt{\lambda} \chi_2(\sqrt{\lambda} (y-y^*))dy= rK -r\int_{-\infty}^\infty e^{\frac{x}{\lambda}} \chi_1(x)dx .
			\end{align*}
Since $ \supp\chi_1 \subset [-1,0]$, $\lim_{\lambda\rightarrow 0}\int e^{\frac{x}{\lambda}} \chi_1(x)dx =0$, so that
			\begin{equation}\label{leftside}
			\lim_{\lambda\rightarrow 0} r  \int_\R dx \int_{-\infty}^{y^*} (K-e^x)\phi(x)\zeta(y)dy= rK >0.
			\end{equation}
	Concerning  \eqref{pezzo1}, we can write
		\begin{align*}
		&\int_{-\infty}^{+\infty} dx \int_{y^*}^{\infty}\tilde{\mathcal{L}   } u(t,x,y) \phi(x)\zeta(y)dy\\&=\int_{-\infty}^{+\infty} dx \int_{y^*}^{\infty}\tilde{\mathcal{L}_0   } u(t,x,y) \phi(x)\zeta(y)dy    -   	\int_{-\infty}^{+\infty} dx \int_{y^*}^{\infty}Ky\frac{\partial }{\partial y}u(t,x,y) \phi(x)\zeta(y)dy ,
%	\\	&=	\int_{-\infty}^{+\infty} dx \int_{y^*}^{\infty} \frac y 2 \left( \frac{\partial^2}{\partial x^2}u(t,x,y)+2\rho\sigma \frac{\partial^2}{\partial x \partial y}u(t,x,y)+ \sigma^2\frac{\partial^2}{\partial y^2}u(t,x,y) \right)\lambda \chi_1(\lambda x )\sqrt{\lambda}\chi_2(\sqrt{\lambda}(y-y^*))dy\\&+\int_{-\infty}^{+\infty} dx \int_{y^*}^{\infty} \left(\left( r-\delta-\frac y 2 \right)\frac{\partial }{\partial x}u(t,x,y)+\kappa(\theta-y)\frac{\partial }{\partial y}u(t,x,y)\right)\lambda \chi_1(\lambda x )\sqrt{\lambda}\chi_2(\sqrt{\lambda}(y-y^*))dy.
			\end{align*}
			where
			$$
		\tilde{\mathcal{L}}_0 =\frac y 2 \left( \frac{\partial^2}{\partial x^2}+2\rho\sigma \frac{\partial^2}{\partial x \partial y}+ \sigma^2\frac{\partial^2}{\partial y^2} \right)+\left( r-\delta-\frac y 2 \right)\frac{\partial }{\partial x}+\kappa\theta\frac{\partial }{\partial y}.
			$$
			%			\begin{align}
%			& \int_{-\infty}^{+\infty} dx \int_{y^*}^{\infty}dy\mathcal{L   } u(t,x,y) \phi(x)\zeta(y)= - \int_{-\infty}^{+\infty} dx \int_{y^*}^{\infty} dy\frac{\partial}{\partial x }u(t,x,y) \frac{\sigma^2(y)}{2}\lambda \chi_2(\lambda (y-y^*)) \lambda^2 \chi_1'(\lambda x)  \label{1}\\
%			&-\int_{-\infty}^{+\infty} dx \int_{-\infty}^{y^*}dy \frac{\partial}{\partial y }u(t,x,y)\lambda \chi_1(\lambda x) \frac{\partial}{\partial y }\left( \frac{\gamma^2(t,y)}{2}\lambda \chi_2(\lambda (y-y^*)) \right)\label{2}\\
%			&+\int_{-\infty}^{+\infty} dx \int_{-\infty}^{y^*}dy u(t,x,y)\frac{\partial^2}{\partial x \partial y }\left(\rho \sigma\gamma(t,y)\lambda \chi_1(\lambda x)\lambda \chi_2(\lambda (y-y^*)) \right)\label{3}\\
%			&-\int_{-\infty}^{+\infty} dx \int_{-\infty}^{y^*}dy u(t,x,y)) \left(r- \frac{\sigma^2(y)}{2} \right)\lambda \chi_2(\lambda (y-y^*)) \lambda^2 \chi_1'(\lambda x)\label{4}\\
%			&+\int_{-\infty}^{+\infty} dx \int_{-\infty}^{y^*}dy \frac{\partial}{\partial y }u(t,x,y) \beta(t,y) \lambda \chi_1(\lambda x)\lambda \chi_2(\lambda (y-y^*)).\label{5}
%			\end{align}
%			We prove that each term in the right hand side goes to $0$ as $\lambda$ goes to $0$.
We can easily  prove that $\lim_{\lambda\downarrow 0}   \int_{-\infty}^{+\infty} dx \int_{y^*}^{\infty}\tilde{\mathcal{L}_0   } u(t,x,y) \phi(x)\zeta(y)dy   =0$.  For example,  integrating by parts two times, we have
			\begin{align*}
		&	\int_{-\infty}^{+\infty} dx \int_{y^*}^\infty   \frac y 2 \sigma^2 \frac{\partial^2}{\partial  y^2} u(t,x,y)    \lambda \chi_1(\lambda x)\sqrt \lambda \chi_2(\sqrt \lambda (y-y^*)) dy\\
		&\quad=-	\int_{-\infty}^{+\infty} dx \int_{y^*}^\infty  \frac  {\sigma^2} 2 \frac{\partial}{\partial  y} u(t,x,y)    \lambda \chi_1(\lambda x)\left( \sqrt \lambda \chi_2(\sqrt \lambda (y-y^*)) + y\lambda \chi'_2(\sqrt \lambda (y-y^*)) \right)dy\\
		&\quad =\int_{-\infty}^{+\infty} dx \int_{y^*}^\infty    \frac  {\sigma^2} 2 u(t,x,y)  \left( 2 \lambda \chi_1(\lambda x)\lambda \chi'_2(\sqrt \lambda (y-y^*))\right)dy \\
%			&\qquad-	\int_{-\infty}^{+\infty} dx \int_{y^*}^\infty dy  \frac y 2 \sigma^2 \frac{\partial}{\partial  y} u(t,x,y)    \lambda \chi_1(\lambda x)\lambda \chi'_2(\sqrt \lambda (y-y^*)) \\
		%	&\quad =\int_{-\infty}^{+\infty} dx \int_{y^*}^\infty dy   \frac  {\sigma^2} 2 u(t,x,y)    \lambda \chi_1(\lambda x)\lambda \chi'_2(\sqrt \lambda (y-y^*)) \\
	%		&\qquad+	\int_{-\infty}^{+\infty} dx \int_{y^*}^\infty dy  \frac  {\sigma^2} 2  u(t,x,y)    \lambda \chi_1(\lambda x)\lambda \chi'_2(\sqrt \lambda (y-y^*))\\
	%		&\qquad +	\int_{-\infty}^{+\infty} dx \int_{y^*}^\infty dy\, \frac y 2 \sigma^2  u(t,x,y)    \lambda \chi_1(\lambda x)\lambda ^{\frac 3 2} \chi''_2(\sqrt \lambda (y-y^*))\\
		&\quad=\sqrt \lambda  \sigma^2\int_{-\infty}^{+\infty} dx \int_0^{\infty }   u\left(t,\frac x \lambda ,\frac y {\sqrt \lambda}+y^*\right)    \chi_1( x) \left(  \lambda \chi'_2(y)+\frac 1 2\lambda ^{\frac 3 2} \left( y +\sqrt \lambda y^*\right) \chi''_2(y)\right)  dy
			%\\=&\sqrt \lambda \sigma^2 \int_{-\infty}^{+\infty} dx \int_0^{\infty } dy \,  u\left(t,\frac x \lambda ,\frac y {\sqrt \lambda}+y^*\right)    \chi_1( x) \chi'_2(y) \\
		%	&+\sqrt \lambda	\int_{-\infty}^{+\infty} dx \int_0^{\infty } dy \,   \frac {\sigma^2} 2  u\left(t,\frac x \lambda ,\frac y {\sqrt \lambda} +y^*\right)   \chi_1( x) \chi'_2(y)\\
		%	& +	\sqrt \lambda\int_{-\infty}^{+\infty} dx \int_0^{\infty} dy\,  \frac {\sigma^2} 2\left( y +\sqrt \lambda y^*\right)  u\left(t,\frac x \lambda ,\frac y {\sqrt \lambda} +y^*\right)    \chi_1( x) \chi''_2(y),
			\end{align*}
which  tends to 0 as $\lambda$ goes to 0 since $u$ is bounded. The other terms in $\tilde \L_0$ can be treated with similar arguments.
On the other hand, we have
	$$
	-\int_{-\infty}^{+\infty} dx \int_{y^*}^{\infty}\kappa y\frac{\partial }{\partial y}u(t,x,y)\lambda \chi_1(\lambda x )\sqrt \lambda\chi_2(\sqrt \lambda(y-y^*))dy\leq 0
	$$
for any $\lambda>0$,	since $u$ is nondecreasing in $y$. Therefore  \eqref{pezzo1} is proved.
%We finally deduce that
%\begin{equation}
%\label{rightside}
%\limsup_{\lambda\rightarrow 0 }		\int_{-\infty}^{+\infty} dx \int_{y^*}^{\infty}dy\mathcal{L   } u(t,x,y) \phi(x)\zeta(y)\leq 0,
%\end{equation}
%which, together with \eqref{leftside}, contradicts \eqref{distr}. Then, the assertion follows.
	\end{proof}
As regards  the regularity of the free boundary, we can prove the following result. 

	\begin{proposition}
		For any $t\in[0,T)$ there exists a countable set $\mathcal N\subseteq (0,\infty)$ such that
		$$
	b(t^-,y)= b(t,y), \qquad y\in (0,\infty)\setminus\mathcal N.
		$$
	\end{proposition}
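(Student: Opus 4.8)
The plan is to argue by contradiction. Fix $t\in(0,T)$ (for $t=0$ there is nothing to prove). Since $s\mapsto b(s,y)$ is nondecreasing, $b(t^-,y):=\lim_{s\uparrow t}b(s,y)=\sup_{s<t}b(s,y)$ exists and $b(t^-,y)\le b(t,y)$; moreover $y\mapsto b(t^-,y)$ is nonincreasing, being an increasing limit of the nonincreasing functions $b(s,\cdot)$. Set $\mathcal N=\{y\in(0,\infty):b(t^-,y)<b(t,y)\}$ and suppose it is uncountable. For each such $y$ pick a rational $c_y\in(b(t^-,y),b(t,y))$ with $c_y\neq rK/\delta$ (when $\delta>0$); by countability of $\mathbb Q$, some value $c$ is taken for uncountably many $y$. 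The set $\{b(t^-,\cdot)<c\}\cap\{b(t,\cdot)>c\}$ is the intersection of an upper and a lower interval, hence an interval, and being uncountable it contains some $(y_1,y_2)$ with $0<y_1<y_2$; thus $b(t^-,y)<c<b(t,y)$ for all $y\in(y_1,y_2)$. In particular $(s,c,y)\in\mathcal C$ for every $s<t$ and $y\in(y_1,y_2)$ (because $c>b(t^-,y)\ge b(s,y)$), while $\{t\}\times\{c\}\times(y_1,y_2)\subseteq\mathcal E$; and since $b$ is nondecreasing in time, $\mathcal O':=(t,T)\times(0,c)\times(y_1,y_2)\subseteq\mathcal E$ is open and $P\equiv\varphi=K-s$ there. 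Recalling that $e^{-ru}P(u,S_u,Y_u)$ is a supermartingale, this forces $(\mathcal L-r)(K-s)=\delta s-rK\le 0$ on $\mathcal O'$, so $c\le rK/\delta$; with $c\neq rK/\delta$ this gives $c<rK/\delta$, i.e. $rK-\delta c>0$ (automatic if $\delta=0$). Note also $c<b(t,y)<K$.

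Now localize. Choose $[y_1',y_2']\subset(y_1,y_2)$ and, using monotonicity of $b(t^-,\cdot)$ and $b(t,\cdot)$, an $\eta>0$ so small that $c-\eta>\sup_{[y_1',y_2']}b(t^-,\cdot)$, $c+\eta<\min\bigl(K,\inf_{[y_1',y_2']}b(t,\cdot)\bigr)$ and $rK-\delta(c+\eta)>0$; put $B=(c-\eta,c+\eta)\times(y_1',y_2')$ and $Q=(t-\varepsilon,t)\times B$ with $0<\varepsilon<t$. Then $Q\subseteq\mathcal C$ together with its lateral boundary, while $\{t\}\times\bar B\subseteq\mathcal E$, where $P=K-s$. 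Since $\mathcal L$ is uniformly elliptic with smooth coefficients on $\bar B\subset\mathbb R\times(0,\infty)$, interior parabolic regularity gives $P\in C^{1,2}(\mathcal C)$ and $\partial_uP+\mathcal LP-rP=0$ in $\mathcal C$; hence $v:=P-(K-s)$ satisfies $v\in C^{1,2}(Q)\cap C(\bar Q)$, $v\ge 0$, $v\equiv 0$ on $\{t\}\times\bar B$, $v\ge 0$ on the lateral boundary of $Q$, and $\partial_uv+\mathcal Lv-rv=-(\mathcal L-r)(K-s)=rK-\delta s$ in $Q$.

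Fix $y_*=(y_1'+y_2')/2$ and, for $u_0\in(t-\varepsilon,t)$, let $\tilde\tau$ be the first exit of $(S_u,Y_u)$ started from $(c,y_*)$ at time $u_0$ from $B$, and $\sigma=\tilde\tau\wedge t$. Applying It\^o to $e^{-r(u-u_0)}v(u,S_u,Y_u)$ on $[u_0,\sigma)$ and taking expectations (all terms are bounded, the process staying in $\bar Q$), and using $v(t,\cdot)=0$ on $\{\tilde\tau\ge t\}$,
$$
v(u_0,c,y_*)=\mathbb E\bigl[\mathbf 1_{\{\tilde\tau<t\}}e^{-r(\tilde\tau-u_0)}v(\tilde\tau,S_{\tilde\tau},Y_{\tilde\tau})\bigr]+\mathbb E\Bigl[\int_{u_0}^{\sigma}e^{-r(u-u_0)}(\delta S_u-rK)\,du\Bigr].
$$
The first term is nonnegative and bounded by $\|v\|_{C(\bar Q)}\,\mathbb P(\tilde\tau<t)$, and since $(c,y_*)$ lies at a fixed positive distance from $\partial B$ and the coefficients are bounded on $\bar B$, a standard small-time estimate (exponential martingale inequality for the diffusion part, Gr\"onwall for the drift) gives $\mathbb P(\tilde\tau<t)\le C_1e^{-c_1/(t-u_0)}$. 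In the second term $S_u\in\bar B$ for $u\le\sigma$, so $\delta S_u-rK\le\delta(c+\eta)-rK=:-\mu<0$, hence it is $\le-\mu e^{-r\varepsilon}\mathbb E[\sigma-u_0]\le-\mu e^{-r\varepsilon}(t-u_0)\mathbb P(\tilde\tau\ge t)\le-\tfrac{\mu}{2}e^{-r\varepsilon}(t-u_0)$ once $u_0$ is close enough to $t$. Therefore $v(u_0,c,y_*)\le C_1\|v\|_{C(\bar Q)}e^{-c_1/(t-u_0)}-\tfrac{\mu}{2}e^{-r\varepsilon}(t-u_0)$, which is $<0$ for $u_0$ sufficiently close to $t$, contradicting $v\ge 0$. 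Hence $\mathcal N$ is countable.

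The crux — and the reason monotonicity of $b$ alone cannot suffice, since one can easily build separately monotone, one-sidedly continuous "boundaries" that jump by a fixed amount over a whole $y$-interval — is the last estimate: one must use the parabolic equation for $P$ in $\mathcal C$ together with the fact that on the relevant part of $\mathcal E$ the discounted payoff has strictly negative drift $\delta s-rK<0$, and then quantify this through the exponentially small exit probability $\mathbb P(\tilde\tau<t)$, so that the drift pushes $v=P-\varphi$ below zero near the terminal face of the cylinder faster than the lateral contribution can compensate. Equivalently one may run the argument through the optimal stopping time $\tau^{*}$ from $(t-\varepsilon,c,y_*)$: on the event that the process stays near $(c,y_*)$ up to time $t$ one checks that $\tau^{*}=t$, and the It\^o–Tanaka decomposition of $\varphi(S_{\tau^{*}})$ then yields $P(t-\varepsilon,c,y_*)<\varphi(c)$, contradicting $(t-\varepsilon,c,y_*)\in\mathcal C$; the only extra care there is to handle the local-time term of $\varphi$ on the exponentially rare complementary event.
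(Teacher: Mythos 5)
Your proof is correct, but it takes a genuinely different route from the paper's. After producing the same kind of open box of $(s,y)$-points that lie in the continuation region for times just below $t$ and in the exercise region at (and just after) $t$ — which you obtain via a rational-selection argument while the paper instead picks a common continuity point $y^*$ of $b(t,\cdot)$ and $b(t^-,\cdot)$ and uses left continuity — the two proofs diverge. The paper stays entirely in the sense of distributions: on the continuation side $(\tilde{\mathcal L}-r)u=-\partial_t u\ge 0$ (since $u$ is nonincreasing in $t$ and solves the PDE there), on the exercise side $u=\psi$ and $\partial_t u+(\tilde{\mathcal L}-r)u\le 0$ gives $(\tilde{\mathcal L}-r)\psi\le 0$; by continuity in $t$ both inequalities hold at time $t$ on the open box, forcing $(\tilde{\mathcal L}-r)\psi=\delta e^x-rK\equiv 0$ there, which is absurd. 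No pointwise regularity of $P$ and no exit-time estimate are required. Your argument, by contrast, is a ``hard'' It\^o computation on the cylinder $Q$: you need interior parabolic $C^{1,2}$ regularity of $P$ in the continuation region away from $\{y=0\}$, the classical-solution form of the free-boundary PDE there, and the quantitative estimate $\mathbb P(\tilde\tau<t)\le C_1 e^{-c_1/(t-u_0)}$ for the early-exit probability from the spatial box. These are all standard facts, and the payoff is that you get an explicit contradiction $v(u_0,c,y_*)<0$ with a concrete rate $\sim(t-u_0)$; but the paper's distributional route avoids all of these auxiliary ingredients and is noticeably shorter. Both proofs hinge on exactly the same structural input — the free-boundary PDE together with the nonvanishing of $(\mathcal L-r)(K-s)=\delta s-rK$ on a nonempty open set — and your remark at the end (that monotonicity of $b$ alone cannot suffice, one must bring in the PDE) correctly identifies why some such analytic input is unavoidable.
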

	\begin{proof}
		Without loss of generality we pass to the logarithm in the $s-$variable and we prove the assertion for the function $\tilde b(t,y)=\ln b(t,y)$.
		Fix $t\in [0,T)$ and recall that $y\mapsto \tilde b (t,y)$ is a nonincreasing function, so it has at most a countable set of discontinuity points. Let $y^*\in (0,\infty)$ be a continuity point for the maps $y\mapsto \tilde b(t,y)$ and $y\mapsto \tilde b(t^-,y)$ and assume that
		\begin{equation}\label{assu}
		\tilde b(t^-,y^*)<\tilde b(t,y^*).
		\end{equation}
		Set $	\epsilon=\frac {\tilde b(t,y^*)-\tilde b(t^-,y^*) }2$.  By continuity, there exist $y_0,y_1> 0$ such that for any $y\in (y_0,y_1)$ we have
		$$
		\tilde b(t,y)>	\tilde b(t,y^*)-\frac \epsilon 4, \qquad \mbox{ and }\qquad
		\tilde b(t^-,y)<	\tilde b(t^-,y^*)+\frac \epsilon 4.
		$$
		Therefore, by using \eqref{assu}, we get, for any $y\in (y_0,y_1)$,
		$$
		\tilde b(t,y)>	\tilde b(t,y^*)-\frac \epsilon 4>\tilde b(t^-,y^*)+\frac 3 4 \epsilon >	\tilde b(t^-,y^*)+\frac \epsilon 4>\tilde b(t^-,y).
		$$
		Now, set $b^-=\tilde b(t^-,y^*)+\frac \epsilon 4$ and $b^+=\tilde b(t^-,y^*)+\frac 3 4 $ and let  $(s,x,y)\in (0,t)\times (b^-,b^+)\times (y_0,y_1)$. Since  $t\mapsto		\tilde b(t,\cdot)$ is nondecreasing, we have $x>\tilde b(t^-,y)\geq \tilde b(s,y)$, so that $u(s,x,y)>\psi(x)$. Therefore, on the set $(0,t)\times (b^-,b^+)\times (y_0,y_1)$ we have 
		$$
		(	\tilde \L-r)u(s,x,y)= -\frac{\partial u}{\partial t}(s,x,y)\geq 0.
		$$
		This means that, for any nonnegative and $C^\infty$ test functions $\theta$, $\psi$ and $\zeta$ which have  support respectively in $(0,t)$, $(b^-,b^+)$ and $(y_0,y_1)$ we can write
				\begin{equation*}
				\int_0^{t}\theta(\tau)d\tau \int_{-\infty}^\infty dx \int_{y^*}^\infty dy ({\tilde{\L}} -r)u(t,x,y) \phi(x)\zeta(y) \geq 0.
				\end{equation*}
				By the continuity of the integrands in $t$, 	we deduce that 		$(\tilde \L -r)u(t,\cdot,\cdot)\geq 0$ in the sense of distributions on the set $(b^-,b^+)\times (y_0,y_1)$.
%				\begin{equation} \label{distr2}
%				\int_{-\infty}^\infty dx \int_{y^*}^\infty dy\tilde ({\mathcal{L}} -r)u(t,x,y) \phi(x)\zeta(y) \geq 0.
%				\end{equation}

	 On the other hand,  for any $(s,x,y)\in (t,T)\times (b^-,b^+)\times (y_0,y_1)$,  we have $x\leq	\tilde b(t,y)\leq 	\tilde b(s,y)$, so that $u(s,x,y)=\psi(x)$. Therefore, it follows from   $\frac{\partial u}{\partial t}+(	\tilde \L-r)u\leq 0$ and the continuity of the integrands that $(	\tilde \L-r)u(t\cdot, \cdot)=(	\tilde \L-r)\psi(\cdot)
	 \leq 0$ in the sense of distributions on the set $ (b^-,b^+)\times (y_0,y_1)$. 
	 
	 We deduce that $(	\tilde \L-r)\psi= 0$ on the set $(b^-,b^+)\times (y_0,y_1)$, but it is easy to see that  $
	 (	\tilde \L-r)\psi(x)=(	\tilde \L-r)(K-e^x)=\delta e^x-rK$ and thus cannot be identically zero in a nonempty open set.
	\end{proof}

\begin{remark}
It is worth observing that the arguments used  in \cite{V} in order to prove the continuity of the exercise price of American options in a multidimensional Black and Scholes model can be easily adapted to our framework.  In particular, if  we  consider the $t$-sections of the exercise region, that is
\begin{equation}\label{tsection}
\begin{split}
\mathcal E_t&= \{ (s,y)\in (0,\infty)\times [0,\infty): P(t,s,y)=\varphi(s)      \},	\\&=\{ (s,y)\in (0,\infty)\times [0,\infty): s\leq b(t,y)\}, \qquad\qquad\qquad t\in [0,T),
\end{split}
\end{equation}
 we can easily prove that 
 	\begin{equation}\label{tsections}
\Ex_t=\bigcap_{u>t}\Ex_u,\qquad\qquad
 \Ex_t=\overline{\bigcup_{u<t}\Ex_u}.
 	\end{equation}
 	However, unlike the case of an American option on several assets, in our case \eqref{tsections} is not sufficient   to  deduce the continuity of the function $t\mapsto b(t,y)$.
\end{remark}
		 \subsection{Strict convexity in the continuation region}
		 We know that $P$ is convex in the space variable (see Proposition \ref{properties_P}). In \cite{T} it is also proved that, in the case of non-degenerate stochastic volatility models, $P$ is strictly convex in the continuation region but the proof follows an analytical approach which cannot be applied in our degenerate model.   In this section we extend this result to the Heston model by using  purely probabilistic techniques. 
		 
		 We will need the following Lemma, whose proof can be found in the Appendix.
		  \begin{lemma}\label{support}
For every continuous function $s: [0,T]\rightarrow (0,\infty)$ such that $s(0)=S_0$ and for every $\epsilon>0$ we have
$$
\P\left(\sup_{t\in [0,T]}|S_t-  s (t)|<\epsilon , \sup_{t\in [0,T]}|Y_t-Y_0|<\epsilon \right)>0.
$$
 \end{lemma}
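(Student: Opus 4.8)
The plan is to reduce the assertion to the well-known fact that Wiener measure charges every sup-norm ball around a continuous function vanishing at the origin, by conditioning on the Brownian motion driving the volatility. Write $B=\rho W+\sqrt{1-\rho^{2}}\,\bar W$ with $\bar W$ a standard Brownian motion independent of $W$, and put $\mathcal G:=\sigma(W_u:u\in[0,T])$. Since $Y$ is the strong solution of its SDE it is $\mathcal G$-measurable, and from the explicit form of the price,
\[
\log S_t=\log Z_t+\sqrt{1-\rho^{2}}\int_0^t\sqrt{Y_u}\,d\bar W_u,\qquad Z_t:=S_0\exp\Big(\int_0^t\big(r-\delta-\tfrac{Y_u}{2}\big)du+\rho\int_0^t\sqrt{Y_u}\,dW_u\Big),
\]
so that the process $Z$ is $\mathcal G$-measurable. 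Denoting by $G$ the event in the statement and by $A:=\{\sup_{t\in[0,T]}|Y_t-Y_0|<\epsilon\}$ its ($\mathcal G$-measurable) volatility part, we have
\[
\P(G)=\E\big[\mathbf{1}_A\,\P\big(\sup_{t\in[0,T]}|S_t-s(t)|<\epsilon\,\big|\,\mathcal G\big)\big],
\]
so it suffices to prove: (i) $\P(A)>0$; and (ii) $\P(\sup_{t\in[0,T]}|S_t-s(t)|<\epsilon\mid\mathcal G)>0$ almost surely.

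For (ii) I would argue conditionally on $\mathcal G$. Given $\mathcal G$, the trajectory of $Y$ and hence the clock $\phi(t):=\int_0^t Y_u\,du$ are deterministic; since $Y\ge 0$ is continuous and spends zero Lebesgue time at $0$ (its drift equals $\kappa\theta>0$ whenever $Y$ touches $0$), $\phi$ is a continuous strictly increasing bijection of $[0,T]$ onto $[0,\phi(T)]$ with $\phi(T)>0$. Moreover $\bar W$ remains, conditionally on $\mathcal G$, a standard Brownian motion, so $N_t:=\int_0^t\sqrt{Y_u}\,d\bar W_u$ is a centred Gaussian martingale with $\langle N\rangle_t=\phi(t)$; by the Dambis--Dubins--Schwarz theorem $N_t=\beta_{\phi(t)}$, where $\beta$ is, conditionally on $\mathcal G$, a standard Brownian motion on $[0,\phi(T)]$. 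Setting $c(t):=\log(Z_t/s(t))$, a $\mathcal G$-measurable continuous function with $c(0)=0$, and $g(u):=-c(\phi^{-1}(u))/\sqrt{1-\rho^{2}}$, one gets $\log(S_t/s(t))=\sqrt{1-\rho^{2}}\,(\beta_{\phi(t)}-g(\phi(t)))$ with $g\in C([0,\phi(T)])$ and $g(0)=0$. For the deterministic constant $r:=\frac{1}{\sqrt{1-\rho^{2}}}\log\!\big(1+\epsilon/\max_{[0,T]}s\big)>0$, the change of variable $u=\phi(t)$ shows that $\{\sup_{u\in[0,\phi(T)]}|\beta_u-g(u)|<r\}\subseteq\{\sup_{t\in[0,T]}|S_t-s(t)|<\epsilon\}$; hence, for a.e.\ $\omega$, the conditional probability in (ii) is bounded below by the Wiener measure of the radius-$r$ ball around the continuous, null-at-$0$ function $g(\cdot;\omega)$ in $C([0,\phi(T)(\omega)])$, which is strictly positive by the full-support property of Wiener measure. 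Notice that this step uses no regularity of $s$ beyond continuity and positivity, which is exactly what makes the conditioning trick convenient and lets us sidestep the degeneracy of the generator.

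For (i), a small-ball estimate for the CIR process, I would split two cases. If $Y_0>0$, shrink $\epsilon$ so that $Y_0-\epsilon>0$ and work up to the exit time $\tau$ of $Y$ from $(Y_0-\epsilon,Y_0+\epsilon)$, on which $\sqrt{\cdot}$ is bounded away from $0$: the constant control $\dot h\equiv-\kappa(\theta-Y_0)/(\sigma\sqrt{Y_0})$ keeps the associated control ODE pinned at $Y_0$, so the Stroock--Varadhan support theorem (applicable because $Y$ is non-degenerate away from $0$) gives $\P(\tau>T)>0$; alternatively, a Girsanov change of measure cancelling the mean-reversion drift up to $\tau$ turns $Y$ into a driftless square-root diffusion $Y_t=Y_0+\beta_{\sigma^{2}\int_0^t Y_u du}$, for which a short bootstrap using the Brownian small-ball property yields the same conclusion. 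If $Y_0=0$, the comparison theorem (already invoked in the proof of Proposition~\ref{prop-lipschitz}) gives $0\le Y_t\le \hat Y_t$, where $\hat Y$ solves $d\hat Y_t=\kappa\theta\,dt+\sigma\sqrt{\hat Y_t}\,dW_t$, $\hat Y_0=0$, a constant multiple of a squared Bessel process started at the origin, so that $\P(\sup_{[0,T]}\hat Y_t<\epsilon)>0$ by the explicit law of squared Bessel processes and therefore $\P(\sup_{[0,T]}|Y_t-Y_0|<\epsilon)=\P(\sup_{[0,T]}Y_t<\epsilon)>0$. One could instead simply quote a known support theorem for CIR.

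The hard part is (i) in the degenerate regime $Y_0=0$ --- and, more generally, whenever the Feller condition fails --- because the diffusion coefficient vanishes at the boundary of $[0,\infty)$, so the classical support theorems do not apply as a black box and one must rely on comparison with an explicit squared Bessel process. Everything else --- the conditioning identity, the Dambis--Dubins--Schwarz representation, the deterministic choice of $r$, and the full support of Wiener measure --- is routine bookkeeping.
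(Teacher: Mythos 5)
Your overall strategy is the paper's: condition on the Brownian motion $W$ driving the volatility (which freezes $Y$ and the $W$--driven part of $\log S$), handle the price event via Dambis--Dubins--Schwarz and the full support of Wiener measure, and reduce the volatility event to a CIR small-ball estimate, settled by localizing $Y$ in $(y_0/2,\infty)$ and applying the Stroock--Varadhan support theorem when $Y_0=y_0>0$. The one place you genuinely diverge is the degenerate case $Y_0=0$. The paper argues by a dichotomy on $T_{\epsilon/2}=\inf\{t>0:Y_t=\epsilon/2\}$: either $\P(T_{\epsilon/2}\geq T)=1$ and the claim is immediate, or $\P(T_{\epsilon/2}<T)>0$ and the strong Markov property at $T_{\epsilon/2}$ hands the problem back to the already-settled positive-initial-condition case. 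You instead dominate $Y$ by $\hat Y$ solving $d\hat Y_t=\kappa\theta\,dt+\sigma\sqrt{\hat Y_t}\,dW_t$, a rescaled squared Bessel process of dimension $4\kappa\theta/\sigma^2$ started at $0$, and invoke a small-ball estimate for $\hat Y$. Both routes are correct, and your pathwise comparison (drift $\kappa(\theta-y)\leq\kappa\theta$ for $y\geq 0$, identical diffusion coefficient, same start) is sound; the trade-off is that the paper's dichotomy is completely self-contained, needing nothing beyond what was just proved, whereas yours needs the squared Bessel small-ball bound from the origin --- a true fact (obtainable, e.g., by Brownian scaling, monotonicity in the dimension, and the integer-dimension representation as a sum of squared Brownian motions), but one that deserves a short justification rather than an appeal to ``the explicit law.'' Likewise your claim that $\phi(t)=\int_0^t Y_u\,du$ is strictly increasing is correct, but the stated reason (positive drift at $0$) is heuristic; the clean argument is that $Y_t>0$ a.s.\ for each fixed $t>0$, so the zero set of $Y$ has Lebesgue measure zero almost surely.
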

		
		 \begin{theorem}
		 	The function $s \mapsto P(t,s,y)$ is strictly convex in the continuation region.
		 \end{theorem}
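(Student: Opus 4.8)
The plan is to combine the linear scaling of the put payoff in the initial stock price with the piecewise‑linear shape of $s\mapsto (K-sz)_+$ and a comparison at a single, well chosen stopping time; the extra ingredient needed for \emph{strictness} is the support result of Lemma \ref{support}, which produces a positive‑probability event on which the optimal strategy never stops and the terminal price lands just above the strike. Set $Z^{t,y}_v:=\exp\!\big((r-\delta)(v-t)-\tfrac12\int_t^v Y^{t,y}_w\,dw+\int_t^v\sqrt{Y^{t,y}_w}\,dB_w\big)$, so that $S^{t,s,y}_v=s\,Z^{t,y}_v$ with $Z^{t,y}_v>0$ and $Z^{t,y}$ independent of $s$. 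Fix $(t,y)$, take $s_1<s_2$ with $b(t,y)<s_1$ (so $(t,s_i,y)\in\mathcal C$), $\lambda\in(0,1)$, $s_0:=\lambda s_1+(1-\lambda)s_2\in(b(t,y),\infty)$, and let $\tau^*=\inf\{v\in[t,T):(v,S^{t,s_0,y}_v,Y^{t,y}_v)\in\mathcal E\}\wedge T$ be the optimal stopping time for $(t,s_0,y)$. Since each $x\mapsto (K-xz)_+$ is convex and $\tau^*$ is optimal only for $s_0$,
\begin{align*}
P(t,s_0,y)&=\E\big[e^{-r(\tau^*-t)}(K-s_0Z^{t,y}_{\tau^*})_+\big]\\
&\le \lambda\E\big[e^{-r(\tau^*-t)}(K-s_1Z^{t,y}_{\tau^*})_+\big]+(1-\lambda)\E\big[e^{-r(\tau^*-t)}(K-s_2Z^{t,y}_{\tau^*})_+\big]\\
&\le \lambda P(t,s_1,y)+(1-\lambda)P(t,s_2,y),
\end{align*}
so it is enough to make the first inequality strict. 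As $e^{-r(\tau^*-t)}>0$ and the pointwise inequality $(K-s_0z)_+\le \lambda(K-s_1z)_+ +(1-\lambda)(K-s_2z)_+$ is strict precisely when the kink $K/z$ of $x\mapsto(K-xz)_+$ lies in $(s_1,s_2)$, it suffices to show $\P(s_1Z^{t,y}_{\tau^*}<K<s_2Z^{t,y}_{\tau^*})>0$; writing $S_v:=S^{t,s_0,y}_v=s_0Z^{t,y}_v$ and $c_2:=Ks_0/s_2<K<Ks_0/s_1=:c_1$, this reads $\P(c_2<S_{\tau^*}<c_1)>0$.

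To produce this event I force the price path to stay in the continuation region up to $T$ and to terminate in $(c_2,c_1)$, so that $\tau^*=T$ and $S_{\tau^*}=S_T\in(c_2,c_1)$. Choose $s^*\in(K,c_1)$ — non‑empty since $s_0>s_1$ gives $c_1>K$ — and a continuous curve $\gamma:[0,T-t]\to(0,\infty)$ with $\gamma(0)=s_0$ that moves monotonically from $s_0$ to $s^*$ on a short interval $[0,\eta]$ and equals $s^*$ on $[\eta,T-t]$. By Lemma \ref{support} applied to the shifted process $(S_{t+\cdot},Y_{t+\cdot})$ started at $(s_0,y)$, the event $G_\varepsilon=\{\sup_{v\in[t,T]}|S_v-\gamma(v-t)|<\varepsilon,\ \sup_{v\in[t,T]}|Y_v-y|<\varepsilon\}$ has positive probability for every $\varepsilon>0$. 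I claim that, for $\varepsilon$ small, on $G_\varepsilon$ one has $(v,S_v,Y_v)\in\mathcal C$ for all $v\in[t,T)$ and $S_T\in(c_2,c_1)$. The latter holds as soon as $\varepsilon<\min(s^*-c_2,c_1-s^*)$, since $S_T\in(s^*-\varepsilon,s^*+\varepsilon)$. For the former, recall $b(\cdot,\cdot)<K$, hence $\mathcal E\subseteq\{s<K\}$, so $S_v\ge K$ already gives $(v,S_v,Y_v)\in\mathcal C$: if $s_0>K$, the curve stays between $\min(s_0,s^*)>K$ and $\max(s_0,s^*)$, and hence so does the whole tube once $\varepsilon<\min(s_0,s^*)-K$; if $s_0\le K$, the path passes through $\{s<K\}$ during the rise, and there I use that $\mathcal C$ is (relatively) open to pick $\delta,\eta_0>0$ with $b(v,y')\le s_0-\delta$ for all $v\in[t,t+\eta_0)$ and all $y'\ge 0$ with $|y'-y|<\delta$; then for $\eta<\eta_0$ and $\varepsilon<\delta\wedge(s^*-K)$, on $G_\varepsilon$ one gets $S_v\ge s_0-\varepsilon>s_0-\delta\ge b(v,Y_v)$ for $v\in[t,t+\eta)$ and $S_v\ge s^*-\varepsilon>K>b(v,Y_v)$ for $v\in[t+\eta,T)$. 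In both cases $\tau^*=T$ on $G_\varepsilon$, hence $\P(c_2<S_{\tau^*}<c_1)\ge\P(G_\varepsilon)>0$.

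Plugging this back, the first inequality in the chain becomes strict (a nonnegative random variable which is a.s. no larger than another and strictly smaller on a set of positive probability has strictly smaller expectation), so $P(t,s_0,y)<\lambda P(t,s_1,y)+(1-\lambda)P(t,s_2,y)$ for all admissible $s_1<s_2$ and all $\lambda\in(0,1)$; combined with the convexity of $s\mapsto P(t,s,y)$ from Proposition \ref{properties_P}, this is precisely strict convexity on $(b(t,y),\infty)$, i.e.\ in the continuation region. The routine parts are the scaling identity, the convexity chain, and the transfer of strictness through the expectation; the delicate step, which I expect to be the main obstacle, is building $\gamma$ and checking that its entire $\varepsilon$‑tube remains in $\mathcal C$ until $T$ — that is where the openness of $\mathcal C$, the bound $b<K$, and the ordering $c_2<K<s^*<c_1$ are all used.
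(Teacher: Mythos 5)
Your proof is correct and follows essentially the same strategy as the paper's: factor $S^{t,s,y}=sZ^{t,y}$, run the convexity chain with the optimal stopping time for the midpoint, reduce strictness to showing that with positive probability the path stays in $\mathcal C$ up to $T$ and the terminal value straddles the kink (i.e.\ lands in $(Ks_0/s_2,\,Ks_0/s_1)$), and produce such paths via Lemma~\ref{support}. The only cosmetic differences are that you target a fixed level $s^\ast\in(K,c_1)$ rather than the paper's $K+\varepsilon/2$, and you invoke openness of $\mathcal C$ near $(t,s_0,y)$ where the paper instead uses the right/left-continuity and monotonicity of $b$; both serve the same purpose of keeping the $\varepsilon$-tube in the continuation region during the initial rise.
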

		 \begin{proof}
		 	Without loss of generality we can assume $t=0$. 	 We have to prove that, if $(s_1,y), \, (s_2,y) \in (0,\infty)\times [0,\infty)$ are such that  $(0,s_1,y), (0,s_2,y) \in \mathcal{C}$, then
		 	\begin{equation}\label{conv}
		 	P(0,\theta s_1+(1-\theta)s_2,y) < 
		 	\theta P(0, s_1,y)+(1-\theta) P(0,s_2,y).
		 	\end{equation}
		 	Let us   rewrite the price process  as
$
		 	S_t^{s,y}=se^{ \int_{0}^{t}\left(r-\delta-\frac {Y_u}{2}\right)du +\int_{0}^{t}\sigma \sqrt{Y_u}dB_u  }:= sM^y_t,
$
		 	where $M^y_t=S^{1,y}_t$ and  assume that, for example, $s_1>s_2$.
		 	We claim that it is enough to prove that, for 
		 	 $\varepsilon>0$ small enough,
		 	\begin{equation}\label{conv_2}
		 	\begin{split}
		 	\P\Big(  &(\theta s_1+(1-\theta)s_2)M^y_t>b(t,Y_t) \, \forall t \in [0,T)\, \& \,(\theta s_1+(1-\theta)s_2)M^y_T \in (K-\varepsilon,K+\varepsilon) \Big) >0.
		 	\end{split}
		 	\end{equation}
		 	In fact, 	let $\tau^*$ be the optimal stopping time for $P(0,\theta s_1+(1-\theta)s_2,y)$. If  $ (\theta s_1+(1-\theta)s_2)M^y_t>b(t,Y_t)$ for every $ t \in [0,T) $, then we are in the continuation region for all $  t \in [0,T) $, hence  $\tau^*=T$. Then, the condition $ (\theta s_1+(1-\theta)s_2)M^y_T \in (K-\varepsilon,K+\varepsilon) $ for  $\varepsilon>0$ small enough ensures on one hand that $s_1M^y_{\tau^*}>K$, since 
		 	\begin{align*}
		 	s_1M^y_{\tau^*}	& = (\theta s_1+(1-\theta)s_2 )M^y_{\tau^*}+ (1-\theta)(s_1-s_2)M^y_{\tau^*}\\
		 	& >K-\varepsilon + \frac{(1-\theta)(s_1-s_2)(K-\varepsilon)}{\theta s_1+(1-\theta)s_2 }>K,
		 	\end{align*}
		 	for $\varepsilon$ small enough. 		 	On the other hand, it also ensures that $ s_2M^y_{\tau^*}<K $, which can be proved  with similar arguments. 
		 		 	Therefore, we get
		 	\begin{equation*}\label{condition_convexity}
		 	\P\left((K-s_1M^y_{\tau^*})_+=0\, \& \, (K-s_2M^y_{\tau^*})_+>0 \right)>0,
		 	\end{equation*}
		  which, from a closer look  at the graph of the function $x	\mapsto (K-x)_+$,  implies that
		 	\begin{align*}
		 	\E[e^{-r\tau^*}(K-(\theta s_1+(1-\theta)s_2 ) M^y_{\tau^*})_+] < \theta \E[e^{-r\tau^*}(K-s_1 M^y_{\tau^*})_+] +(1-\theta)\E[e^{-r\tau^*}(K-s_2 M^y_{\tau^*})_+],
		 	\end{align*}
		 	and, as a consequence, \eqref{conv}.

	So, the rest of the proof is devoted  to prove that \eqref{conv_2} is actually  satisfied.
%	Thanks to Lemma \ref{support} we know that, for any $\varepsilon>0$ and any continuous function $m:[0,T]\rightarrow\R$ such that $m(0)=1$, we have
%	\begin{equation}
%		\P\left(\sup_{t\in [0,T]}|(	\theta s_1+(1-\theta)s_2 )M^y_t-  (	\theta s_1+(1-\theta)s_2 )m (t)|<\varepsilon , \sup_{t\in [0,T]}|Y_t-Y_0|<\varepsilon \right)>0.
%	\end{equation}

With this aim, we first consider a suitable continuous function $m:[0,T]\rightarrow\R$ constructed as follows. 
	 In order to simplify the notation, we set $s=\theta s_1+(1-\theta)s_2$. 
%We first claim that there exists a continuous function $m:[0,T]\rightarrow\R$ such that, for $\varepsilon>0$ small enough,
%\begin{equation}\label{conditionsurm}
%m(0)=1,\qquad sm(t)>b(t,y)+\varepsilon,\qquad sm(T)\in (K-\varepsilon, K+\varepsilon).
%\end{equation}
		 Note that, for $\varepsilon>0$ small enough, we have $s=\theta s_1+(1-\theta)s_2 >b(0,y)+\varepsilon$ since $(0,s_1,y)$ and $ (0,s_2,y)$ are in the continuation region $ \mathcal{C}$, that is $s_1,s_2\in (b(0,y),\infty)$.  By the right continuity of the map $t\mapsto b(t,y)$, we know that there exists $\bar t\in (0,T)$ such that $s>b(t,y)+\frac \varepsilon 2$ for any $t\in [0,\bar t]$. Moreover the function $y\mapsto b(\bar t, y)$ is left continuous and nonincreasing, so there exists $\eta_\varepsilon>0$ such that $s >b(\bar t,z)+\frac \varepsilon 4$ for any $z\geq y-\eta_\varepsilon$.  Assume now that $s\leq K+\frac \varepsilon 2$ and
 set 
	 $$
	 m(t)=\begin{cases}1 + \frac t {\bar t} \left( \frac{ K+\frac \varepsilon 2 }{s}- 1 \right), \qquad &0\leq t\leq \bar t,\\
	 \frac{K+\frac \varepsilon 2}{s},\qquad & \bar t\leq t\leq T.
	 \end{cases}
	 $$
	Note that $m$ is continuous, $m(0)=1$ and, recalling that $t\mapsto b(t,y)$ is nondecreasing and $b(t,y)<K$,
	$$
	sm(t)=\begin{cases}
s + \frac t {\bar t} \left(K+\frac \varepsilon 2 - s \right)	\geq s >b(\bar t,y-\eta_\varepsilon )+\frac \varepsilon 4\geq b( t,y-\eta_\varepsilon )+\frac \varepsilon 4, \qquad &0\leq t\leq \bar t,\\
	K+\frac \varepsilon 2\geq b(t,y-\eta_\varepsilon),  &\bar t\leq  t\leq T.
	\end{cases}
	$$
	 Moreover, by  Lemma \ref{support}, we know that, for any $\epsilon>0$,
$$
\P\left(\sup_{t\in [0,T]}|sM^y_t-  sm (t)|<\epsilon , \sup_{t\in [0,T]}|Y_t-y|<\epsilon \right)>0.
$$
Therefore, by applying Lemma \ref{support} with  $\epsilon=\min\left\{\frac \varepsilon 8,\eta_\varepsilon\right\}$, we have that, with positive probability,
$$
sM^y_t>sm(t)-\frac \varepsilon 8\geq b(t, y-\eta_\varepsilon)+\frac \varepsilon 8\geq b(t,Y_t).
$$
	and
	$$
	sM^y_T\leq sm(T)+\frac \varepsilon 8\leq K+\varepsilon,\qquad 	sM^y_T\geq sm(T)-\frac \varepsilon 8\geq K-\varepsilon,
	$$
%	$$
%m(0)=1,\quad (	\theta s_1+(1-\theta)s_2 )m(t)>b(t,y), \quad  (\theta s_1+(1-\theta)s_2 )m(T)\in (K-\varepsilon,K+\varepsilon).
%	$$
which proves \eqref{conv_2} concluding the proof.  If $s>K+\frac \varepsilon 2$, then it is enough to take $m(t)$ as a nonincreasing continuous function such that $m(0)=1$ and $sm(T)=K+\frac \varepsilon 2$. Then, the assertion follows with the same reasoning.

%	
%		 	Recall that $sM^y_t>b(t,Y^y_t) $ if and only if $P(t,S^{s,y}_t,Y^y_t) >\varphi(S^{s,y}_t)$ so, since both $P$ and $\varphi$ are continuous, the probability in \eqref{conv_2} can be rewritten as
%		 	\begin{equation}
%		 	\P\big((M^y_t,Y^y_t)\in \mathcal{U} \  \forall t \in [0,T]\big)=\P\big((S^{1,y}_t,Y^y_t)\in \mathcal{U} \  \forall t \in [0,T]\big)>0,\label{con_3}
%		 	\end{equation}
%		 	for a certain open set $\mathcal{U}.$
%Therefore,  it is enough to show that the open set $\mathcal{U}$ is not empty.  
%By using Lemma \ref{support}, it suffices to find a continuous function $m=m(t)$ such that $m(0)=1$,
%		 	$$
%		 	(\theta s_1+(1-\theta)s_2)m(t)>b(t,y), \qquad	t \in [0,T) 
%		 	$$
%		 	and 
%		 	$$
%		 	(\theta s_1+(1-\theta)s_2)m(T) \in (K-\varepsilon,K+\varepsilon).
%		 	$$
%
%Note that $(\theta s_1+(1-\theta)s_2)m(0)=\theta s_1+(1-\theta)s_2 >b(0,y)$ since $(0,s_1,y)$ and $ (0,s_2,y)$ are in the continuation region $ \mathcal{C}$, that is $s_1,s_2\in (b(0,y),\infty)$.  Denote by $\tau^*=\inf \{  t>0:  \theta s_1+(1-\theta)s_2\leq b(t,y) \}$. Then, it is enough to take
%		 	$$
%		 	m(t)=\begin{cases}
%		 	\theta s_1+(1-\theta)s_2+ \frac{K+\frac \varepsilon 2 -(\theta s_1+(1-\theta)s_2)}{\tau^*}t,\qquad& t<\tau^*,\\
%		 	K+\frac \varepsilon 2, & t\geq \tau^*,
%		 	\end{cases}
%		 	$$  
%		 	and the assertion follows.
		 \end{proof}
		 
			\subsection{Early exercise premium}\label{sect-EEP}
We now extend to the stochastic volatility	Heston model a well known result in the Black and Scholes world, the so called \textit{early exercise premium formula}. It is an explicit formulation of the quantity $P-P_e$, where $P_e=P_e(t,s,y)$ is the European put price with the same strike price $K$ and maturity $T$ of the American option with price function $P=P(t,s,y)$.  Therefore, it represents the additional price you have to pay for the possibility of exercising before maturity.
			\begin{proposition}\label{EEP}
			Let $P_e(0,S_0,Y_0)$ be the European put price at time $0$ with maturity $T$ and strike price $K$. Then, one has
				\begin{equation*}
				P(0,S_0,Y_0)=P_e(0,S_0,Y_0)-\int_0^T e^{-rs}\E[(\delta S_s  -rK)\mathbf{1}_{\{S_s\leq b(s,Y_s)\}}]ds.
				\end{equation*}
			\end{proposition}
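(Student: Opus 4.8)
The plan is to apply the Itô–Tanaka / Meyer formula to the discounted payoff process $e^{-rs}\varphi(S_s^{0,S_0,Y_0})=e^{-rs}(K-S_s)_+$ and then take expectations, using the fact that $P$ solves the obstacle problem associated with $\tilde{\mathcal L}-r$. More precisely, writing $\psi(x)=(K-e^x)_+$, I would first record the distributional identity $(\tilde{\mathcal L}-r)\psi(x)=\delta e^x-rK$ on $\{x<\ln K\}$ (already computed in the excerpt), together with a second-derivative term $\tfrac{y}{2}\,\partial_{xx}\psi$ which is a (negative multiple of a) Dirac mass at $x=\ln K$; equivalently, in the $s$-variable, $\varphi''(s)=\delta_{\{s=K\}}$ contributes the local time of $S$ at level $K$. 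Applying Itô–Tanaka to $e^{-rs}(K-S_s)_+$ between $0$ and $T$, taking expectations, and noting that the local-time term has zero mean against $e^{-rs}$ after integration (the standard cancellation, since the drift of $S$ pushes it symmetrically and the density of $S_s$ at $K$ is integrable), one obtains
\begin{equation*}
\E[e^{-rT}(K-S_T)_+]=(K-S_0)_+ + \int_0^T e^{-rs}\E\big[(\mathcal L-r)\varphi(S_s)\big]\,ds
= (K-S_0)_+ - \int_0^T e^{-rs}\E\big[(rK-\delta S_s)\mathbf 1_{\{S_s<K\}}\big]\,ds,
\end{equation*}
which is just the well-known European decomposition and is not yet what we want.

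The real work is the American side. Here the idea is to use the optional/Doob–Meyer decomposition of the Snell envelope: the process $s\mapsto e^{-rs}P(s,S_s,Y_s)$ is a supermartingale, and on the continuation region $\mathcal C$ we have $(\partial_t+\tilde{\mathcal L}-r)P=0$, while on the exercise region $\mathcal E=\{s\le b(s,y)\}$ we have $P(s,s',y)=\varphi(s')=(K-s')_+$ and hence $(\partial_t+\mathcal L-r)P = (\mathcal L-r)\varphi = \delta s'-rK$ there (in the distributional sense, and recalling $b(t,y)<K$ so the kink at $K$ is never inside $\mathcal E$, so no local-time contribution appears from the exercise region). Thus the Doob–Meyer increasing process is absolutely continuous with density $-(\delta S_s-rK)\mathbf 1_{\{S_s\le b(s,Y_s)\}}=(rK-\delta S_s)\mathbf 1_{\{S_s\le b(s,Y_s)\}}$, giving
\begin{equation*}
e^{-rT}P(T,S_T,Y_T)=P(0,S_0,Y_0)-\int_0^T e^{-rs}(rK-\delta S_s)\mathbf 1_{\{S_s\le b(s,Y_s)\}}\,ds + (\text{martingale}).
\end{equation*}
Since $P(T,\cdot)=\varphi$, taking expectations yields
\begin{equation*}
\E[e^{-rT}(K-S_T)_+]=P(0,S_0,Y_0)-\int_0^T e^{-rs}\E\big[(rK-\delta S_s)\mathbf 1_{\{S_s\le b(s,Y_s)\}}\big]\,ds,
\end{equation*}
and subtracting the European identity above (whose right-hand side is exactly $P_e(0,S_0,Y_0)$) gives the claimed formula.

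The main obstacle is making the Itô-type expansion of $s\mapsto e^{-rs}P(s,S_s,Y_s)$ rigorous, since $P$ is not known to be $C^{1,2}$ up to the free boundary, and moreover the diffusion is degenerate at $y=0$ so classical parabolic regularity does not apply near that boundary. I would handle this by approximation: replace $P$ by the penalized/mollified price functions $u^n_\varepsilon$ from Section 3 (or regularizations of $P$ itself), which are smooth and satisfy $-\partial_t u - \mathcal A^n u + \zeta_\varepsilon(u-\psi)=0$, apply the ordinary Itô formula to those, control the penalty term $\zeta_\varepsilon(u^n_\varepsilon-\psi)$ (which converges to the increasing process concentrated on $\mathcal E$), use the convergence $u^n\to u$ from Proposition \ref{convP} together with Lemma \ref{lemmasup_diff} to pass to the limit in the stochastic integrals, and invoke uniform integrability from the boundedness/Lipschitz estimates in Proposition \ref{prop-lipschitz}. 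An alternative, following Jacka's approach as the paper hints via \cite{J}, is to bypass regularity of $P$ entirely: write $P(0,S_0,Y_0)=\E[e^{-r\tau^*}\varphi(S_{\tau^*})]$ for the optimal stopping time $\tau^*=\inf\{s:S_s\le b(s,Y_s)\}$, apply Itô–Tanaka directly to $e^{-rs}\varphi(S_s)$ up to $\tau^*$ using only the (weak) regularity of $\varphi$, compare with the European expansion run to time $T$, and show the difference of the two integral remainders is precisely the stated boundary integral — the delicate point there being to argue that the contribution of the set $\{S_s>b(s,Y_s)\}$ to $(\mathcal L-r)\varphi$-type terms cancels between the American and European expansions, and that the local time of $S$ at $K$ never charges the exercise region because $b<K$.
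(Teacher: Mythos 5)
Your high-level target is right — decompose $U_t=e^{-rt}P(t,S_t,Y_t)$ by Doob–Meyer, identify the absolutely continuous part of the increasing process with $e^{-rs}(rK-\delta S_s)\mathbf 1_{\{S_s\le b(s,Y_s)\}}$, take expectations at $T$, and subtract the European side. But the proposal leaves out the single most delicate ingredient, and the two routes you offer to fill the regularity gap both have problems.

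The paper's argument does not try to show that $P$ is regular enough for an Itô expansion, nor does it pass to the limit in penalized problems. It follows Jacka directly: write $U_t=M_t-A_t$ (Doob–Meyer) and $Z_t=e^{-rt}(K-S_t)_+=m_t+a_t$ (Itô–Tanaka, producing the local time of $S$ at $K$), set $\zeta_t=U_t-Z_t\ge0$, and apply Tanaka's formula to $\zeta_t=\zeta_t^+$. Because $A$ only increases on $\{\zeta=0\}$, this yields $-A_t=\int_0^t\mathbf 1_{\{\zeta_s=0\}}\,da_s+\tfrac12 L^0_t(\zeta)$. The whole game is then to prove that the local time of $\zeta$ at $0$ — not the local time of $S$ at $K$ — is identically zero. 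This is Lemma~\ref{local_time} of the paper, and it is nontrivial: it uses that $L^0(\zeta)$ only charges $\partial\mathcal E$, that the pair $(S,Y)$ has a density, and a careful argument (using monotonicity and one-sided continuity of $b$, and a left-limit boundary $\hat b$) to show that the "graph" of $b$ has Lebesgue measure zero so the boundary cannot support the local time. Your observation that ``$b<K$ so the kink of $\varphi$ never lies in $\mathcal E$'' handles the local time of $S$ at $K$, which is indeed the easier issue; it does not address $L^0(\zeta)$. Without Lemma~\ref{local_time} you cannot conclude that $A$ is absolutely continuous with the claimed density, which is exactly what your sketch asserts without justification.

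Your alternative route (expand $e^{-rs}\varphi(S_s)$ to $\tau^*$ for the American side, to $T$ for the European side, and subtract) does not work as stated: after $\tau^*$ the process $(S,Y)$ may leave and re-enter the exercise region, so the interval $[\tau^*,T]$ is not the same as the random set $\{s:S_s\le b(s,Y_s)\}$, and the subtraction of the two expansions would produce an integral over $[\tau^*,T]$ rather than the integral against $\mathbf 1_{\{S_s\le b(s,Y_s)\}}$ that the statement requires. The penalization route is in principle viable but you would still have to prove that the penalty term, in the limit, carries no singular part on $\partial\mathcal E$, which is the same difficulty in different clothes. In short: the strategy is correct, but the missing ingredient is the vanishing of the local time of $U-Z$ at the origin, and that is where the paper spends its effort.
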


		The proof of Proposition \ref{EEP} relies on purely probabilistic  techniques and is based on the  results first  introduced in \cite{J}. Let $U_t=e^{-rt}P(t,S_t,Y_t)$ and $Z_t=e^{-rt}\varphi(S_t)$. Since $U_t$ is a supermartingale, we have the Snell decomposition 
		\begin{align}\label{doob_dec}
		& U_t=M_t-A_t,
		\end{align}
		where $M$ is a martingale and $A$ is a nondecreasing predictable process with $A_0=0$, continuous with probability 1 thanks to the continuity of $\varphi$. On the other hand, 
			\begin{align*}
			Z_t=e^{-rt}(K-S_t)_+&=Z_0-r\int_0^t e^{-rs}(K-S_s)_+ds -\int_0^t e^{-rs}\mathbf{1}_{\{ S_s\leq K\}}dS_s +\int_0^te^{-rs}dL^K_s(S)\\
			&=m_t+a_t,
			\end{align*}
		where $L^K_t(S)$ is the local time of $S$ in $K$,
		$$
		m_t=Z_0 -\int_0^te^{-rs}\mathbf{1}_{ \{ S_s\leq K\}  }S_s\sqrt{Y_s}dB_s
		$$  is a local martingale, and 
			$$
			a_t=-r\int_0^t e^{-rs}(K-S_s)_+ds -\int_0^t e^{-rs}\mathbf{1}_{(-\infty,K]}S_s(r-\delta)ds +\int_0^te^{-rs}dL^K_s(S)
			$$
			 is a predictable process with finite variation and $a_0=0$. Recall that $a_t$ can be written as the sum of an increasing and a decreasing component, that is $a_t=a_t^++a_t^-.$
				Since $(L^K_t)_t$ is increasing, we deduce that the decreasing process $(a_t^-)_t$ is absolutely continuous with respect to the Lebesgue measure, that is 
			$$
	da_t^- \ll dt.
			$$
	%		We denote by $k_t$ the  density of $a_t^-$ w.r.t. $dt$ and, by Motoo Theorem (see \cite{DM}), we can write $k_t =k(t,S_t,Y_t)$.
		
We now define $$\zeta_t=U_t-Z_t \geq 0.$$
		Thanks to  Tanaka's formula,
		\begin{equation*}\label{Tanaka}
		\zeta_t=	\zeta_t^+= \zeta_0 + \int_0^t \mathbf{1}_{\{\zeta_s >0\}}d\zeta_s+ \frac 12L^0_t(\zeta),
		\end{equation*}	
		where $L^0_t(\zeta)$ is the local time of $\zeta$ in $0$.
		%	This means that 
		%	\begin{equation*}
		%	\frac 12 L^0_t(\zeta)= \int_0^t \mathbf{1}_{\{\zeta_s=0\}}d\zeta_s.
		%	\end{equation*}
		%	On the other hand, \eqref{Tanaka} could be rewritten as
		Therefore,
		\begin{align*}
		\zeta_t&= \zeta_0 + \int_0^t \mathbf{1}_{\{\zeta_s >0\}}d(U_s-Z_s)+ \frac 12L^0_t(\zeta)\\
		&=\zeta_0 + \int_0^t \mathbf{1}_{\{\zeta_s >0\}}dM_s- \int_0^t \mathbf{1}_{\{\zeta_s >0\}}dm_s - \int_0^t \mathbf{1}_{\{\zeta_s >0\}}da_s+ \frac 12L^0_t(\zeta),
		\end{align*}	
		where the last equality follows from the fact that the process $A_t$ only increases on the set $\{\zeta_t=0\}$.
		Then, we can write
		\begin{align*}
		U_t&= U_0 + \bar{M}_t - \int_0^t \mathbf{1}_{\{\zeta_s >0\}}da_s+ \frac 12L^0_t(\zeta)+a_t  =U_0 + \bar{M}_t + \int_0^t \mathbf{1}_{\{\zeta_s =0\}}da_s+ \frac 12L^0_t(\zeta),
		\end{align*}
		where $\bar{M}_t=\int_0^t \mathbf{1}_{\{\zeta_s>0\}}d(M_s-m_s)+m_t$ is a local martingale. 
		Thanks to the continuity of $U_t$ we have the uniqueness of the decompositions, so
		\begin{equation}\label{A}
		-A_t= \int_0^t \mathbf{1}_{\{\zeta_s =0\}}da_s+ \frac 12L^0_t(\zeta).
		\end{equation}
		This means in particular that $\int_0^t \mathbf{1}_{\{\zeta_s =0\}}da_s+ \frac 12L^0_t(\zeta)$ is decreasing, but $L^0_t(\zeta)$ is increasing  so $-\int_0^t \mathbf{1}_{\{\zeta_s =0\}}da_s$ must be an increasing process and 
		\begin{equation*}
		\frac 12 dL^0_t(\zeta) \ll \mathbf{1}_{\{\zeta_t =0\}}da_t^-\ll dt.
		\end{equation*}
		We define $\mu_t$ the density of $ L^0_t(\zeta) $ w.r.t. $dt$. Note that, by Motoo Theorem (see \cite{DM}),  we can write $\mu_t=\mu(t,S_t,Y_t)$. 
%Moreover, let us consider the $t$-sections of the exercise region defined in \eqref{tsection}. We can easily prove the following Lemma.
%\begin{lemma}\label{lemmat}
%	For any $t\in[0,T)$ we have 
%	\begin{equation*}
%	\Ex_t=\overline{	\mathring{\Ex_t}},
%	\end{equation*}
%	and
%	$
%	\mathring{\Ex}_t=\{(s,y)\in (0,\infty)\times[0,\infty): 0<s<b(t,y^+)\}\neq \emptyset,
%	$
%	where $b(t,y^+)=\lim_{y\rightarrow y^+}b(t,y)$. 
%\end{lemma}
%	The proof is given in the Appendix for the sake of completeness. 

 Now, let us prove the following preliminary result.
		\begin{lemma}\label{local_time}
			The local time $L^0_t(\zeta)$ is indistinguishable from 0. 
		\end{lemma}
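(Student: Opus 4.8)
The plan is to show that the local time $L^0_\cdot(\zeta)$, which we already know to be absolutely continuous in time, can increase only while the trajectory of $(S,Y)$ sits on the boundary $\partial\mathcal{E}$ of the exercise region, and that this occurs on a set of times of zero Lebesgue measure. First I would record what is already available. On $\{\zeta_s=0\}$ one has $S_s\le b(s,Y_s)<K$, so on this set neither the local time $L^K(S)$ nor the positive part of $a$ contribute, and $\mathbf{1}_{\{\zeta_s=0\}}\,da_s=e^{-rs}(\delta S_s-rK)\mathbf{1}_{\{\zeta_s=0\}}\,ds$; substituting into \eqref{A} gives
\begin{equation*}
\tfrac12\,L^0_t(\zeta)=\int_0^t e^{-rs}(rK-\delta S_s)\mathbf{1}_{\{\zeta_s=0\}}\,ds-A_t ,
\end{equation*}
and, as already noticed, $\tfrac12\,dL^0_t(\zeta)\ll\mathbf{1}_{\{\zeta_t=0\}}\,da^-_t\ll dt$, so $dL^0_t(\zeta)=2\mu(t,S_t,Y_t)\,dt$ with $\mu\ge0$ carried by $\mathcal{E}$.

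Next I would localise. The measure $dL^0_\cdot(\zeta)$ is carried by $\{s:\zeta_s=0\}$, so nothing is contributed while $(s,S_s,Y_s)\in\mathcal{C}$; moreover on $\operatorname{int}\{s:\zeta_s=0\}$ the process $\zeta$ is locally equal to $0$, hence $d\zeta\equiv0$ there and Tanaka's formula shows that $L^0_\cdot(\zeta)$ is constant on that open set (on such intervals $U$ and $Z$ coincide, so their finite--variation parts coincide and $dA_s=e^{-rs}(rK-\delta S_s)\,ds$, exactly the density of the first term above). Therefore $L^0_\cdot(\zeta)$ can increase only on the random closed set
\begin{equation*}
J:=\{s\in[0,T):\zeta_s=0\}\setminus\operatorname{int}\{s:\zeta_s=0\}=\partial\{s:\zeta_s>0\},
\end{equation*}
and, since $\mathcal{C}$ is open and $\mathcal{E}=\mathcal{C}^c$ is closed, any $s\in J$ satisfies $(s,S_s,Y_s)\in\mathcal{E}\cap\overline{\mathcal{C}}=\partial\mathcal{E}$.

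It then remains to prove that $J$ is Lebesgue-negligible, a.s. For fixed $s$, the section $\mathcal{E}_s$ is the region lying below the graph of the nonincreasing function $y\mapsto b(s,y)$, and the section of $\operatorname{int}\mathcal{E}$ at time $s$ differs from $\mathcal{E}_s$ only along that graph, along the (at most countably many) horizontal segments produced by the jumps of $y\mapsto b(s,y)$, and along the segments produced by time-jumps of $b(\cdot,y)$ at $s$; by the proposition asserting that for each $s$ one has $b(s^-,y)=b(s,y)$ for all $y$ outside a countable set, this last family is also supported on a Lebesgue-null set of $y$'s. Hence $(\partial\mathcal{E})_s$ is Lebesgue-null in $(0,\infty)\times[0,\infty)$. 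Since for a.e.\ $s\in(0,T)$ the law of $(S_s,Y_s)$ is absolutely continuous with respect to Lebesgue measure — the CIR component admits a density on $(0,\infty)$ and spends zero Lebesgue time at $\{y=0\}$ because of its strictly positive drift $\kappa\theta$ there, and conditionally on the path of $Y$ the variable $S_s$ has a Gaussian-type density — Fubini's theorem yields
\begin{equation*}
\E\big[\operatorname{Leb}(J)\big]\le\int_0^T\P\big((S_s,Y_s)\in(\partial\mathcal{E})_s\big)\,ds=0 .
\end{equation*}
As $L^0_\cdot(\zeta)$ is absolutely continuous in $t$ but can increase only on the Lebesgue-null set $J$, it vanishes identically, and by continuity it is indistinguishable from $0$.

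The step I expect to be the main obstacle is this last one. In the Black--Scholes case the vanishing of the local time follows at once from the smooth fit of the value function across the free boundary, but here smooth fit is only established later and in a weak form, so one cannot appeal to it; the difficulty is the poor regularity of the free boundary $b$ — only monotone in each variable and upper semicontinuous — which has to be circumvented using its monotone graph structure, the partial continuity of $b$ proved above, and the non-degeneracy of the diffusion $(S,Y)$ away from $\{y=0\}$.
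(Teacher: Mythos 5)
Your proposal is correct and follows the same overall two-step strategy as the paper: first show that $dL^0(\zeta)$ only charges times at which $(t,S_t,Y_t)\in\partial\mathcal{E}$, then show that this time set is Lebesgue-null and conclude from the already-established absolute continuity $dL^0_t\ll dt$. However, the two technical steps are implemented differently. For the first step, you argue via Tanaka's formula that $L^0$ is constant on the interior of $\{t:\zeta_t=0\}$, hence supported on $J=\partial\{t:\zeta_t>0\}$, whose image lies in $\mathcal{E}\cap\overline{\mathcal{C}}=\partial\mathcal{E}$; the paper instead uses the identity $L^a_t=\int_0^t\mathbf{1}_{\{\zeta_s=a\}}dL^a_s$ for $a>0$ together with the weak convergence $L^a\to L^0$ and portmanteau on the open set $\{t:(t,S_t,Y_t)\in\mathring{\mathcal{E}}\}$. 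Both are valid; yours is arguably more elementary. For the second step, the paper bounds the $s$-section of $\partial\mathcal{E}$ by the strip $\{\hat b(s,y)\le x\le b(s,y)\}$ with $\hat b(t,y)=\sup_{s<t,\,z>y}b(s,z)$, and then shows $\int_0^T\!\int(b-\hat b)\,dy\,dt=0$ directly by a translation/monotone-convergence trick, \emph{without} invoking the earlier proposition on $t$-continuity of $b$; you instead appeal to that proposition (countable set where $b(s^-,y)\neq b(s,y)$) together with the countability of jumps of $y\mapsto b(s,y)$, which gives the slightly stronger statement that $\hat b(s,\cdot)=b(s,\cdot)$ a.e.\ in $y$ for \emph{every} fixed $s$. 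Both are adequate; the paper's version is more self-contained, yours makes the geometric picture more explicit. The remaining ingredient you identify — absolute continuity of the law of $(S_s,Y_s)$ for a.e.\ $s$ — is also what the paper uses. In short, the proof is correct, but at both key junctures you take a genuinely different (and equally legitimate) technical route.
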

		\begin{proof}
			First of all, note that $L^0_t(\zeta)$ only increases on the set $\{(t,S_t,Y_t)\in \partial\mathcal{E}\}$.
			 In fact, recall that  $L^a_t=\int_0^t\mathbf{1}_{\{U_s-Z_s=a\}}dL^a_s$  for every $a>0$ and $t>0$, so that		 
		$$
			\int_0^t\mathbf{1}_{\{(s,S_s,Y_s) \in \mathring{\mathcal{E}}\}}dL^a_s=0.
$$
Moreover it is well known that, for any $t>0$, $L^0_t=\lim_{a\rightarrow 0}L^a_t$, which implies that the measures $L^a_t$ weakly converge to $L^0_t$ as $a\rightarrow 0$. Then, we can deduce that 
$$
\int_0^T \mathbf{1}_{\{(s,S_s,Y_s) \in \mathring{\mathcal{E}}\}}dL^0_s\leq \liminf	\int_0^T\mathbf{1}_{\{(s,S_s,Y_s) \in \mathring{\mathcal{E}}\}}dL^a_s=0
$$
%$$
%L^0_t(\{(s,S_s,Y_s) \in \mathring{\mathcal{E}}\})\leq \liminf L^a_t(\{(s,S_s,Y_s) \in \mathring{\mathcal{E}}\})=0.
%$$
Therefore, we have 
$$
	\E[L^0_t(\zeta)]= \E\left[\int_0^t\mathbf{1}_{\{U_s-Z_s=0\}}dL^0_s]\right]= \E\left[\int_0^t\mathbf{1}_{\{S_s\leq b(s,Y_s)\}}\mu(s,S_s,Y_s)k(s,S_s,Y_s)ds\right]
$$
Recall that  $t\mapsto b(t,y)$ is nondecreasing and right continuous, while  $y\mapsto b(t,y)$ is nonincreasing and left continuous. Let us consider the function 
 $$
 \hat b(t,y)=\sup_{s<t,y>y} b(s,z)
 $$
and the open set $\{  (t,x,y)\mid x<\hat b(t,y) \}\subseteq[0,T]\times \R\times [0,\infty)$. Thanks to the continuity of the trajectories, we have that the set $\{t\in [0,T]\mid  S_t<\hat{b}(t,Y_t)  \}$ is an open set, so that
$$
\int_0^T \mathbf{1}_{\{   S_t<\hat b(t,Y_t) \}}dL^0_t=0.
$$
Therefore, in order to prove that $	\E[L^0_t(\zeta)]=0$, it suffices to prove that
$$
\E\left[\int_0^T \mathbf{1}_{\{  \hat b(t,Y_t)\leq  S_t\leq b(t,Y_t) \}}dL^0_t\right]=0.
$$
Since the pair $(S_t,Y_t)$ has density, it is enough to prove that
\begin{equation}\label{lasttoprove}
\int_0^T dt\int  \mathbf{1}_{\{  \hat b(t,y)\leq  x\leq b(t,y) \}}dxdy=\int_0^T \int (b(t,y)-\hat b(t,y))dy=0
\end{equation}
In order to prove \eqref{lasttoprove}, note that $\hat b(t,y)=\lim_{n\rightarrow}b(t-\frac 1 n,y+\frac 1 n)$. Then, recalling that $\hat b\leq b$, for any $A>0$, with a simple change of variables we get
$$
\int_0^Tdt\int_0^A b(t,y)dy=\lim_{n\rightarrow\infty}\int_0^{T-\frac 1  n}dt\int_0^{A+\frac 1 n} b(t,y)dy, 
$$
from which we deduce \eqref{lasttoprove}.
%		Moreover, thanks to Lemma \ref{lemmat}, we have
%$$
%\partial \mathcal E_t=\{(t,b(t,y),y) :(t,y)\in [0,T)\times [0,\infty)\}\cup \bigcup_{y\in\mathcal{D}_t}\{(s,y): b(t,y)< s\leq b(t,y^+)\},
%$$
%where $\mathcal{D}_t$ is the (numerable) set of the discontinuity points of $y\mapsto b(t,y)$ and the union is disjoint. 
%Thanks to the continuity of $P$ and $\varphi$, it is easy to show that 		$	Leb\{ (t,b(t,y),y) :(t,y)\in [0,T)\times [0,\infty)\}=0$, so that $Leb \, \partial \mathcal  E_t= 0$ for any $t\in [0,T]$.
%By denoting by $p=p(s,x,y)$ the density function of $(S_s^{x,y},Y_s^y)$, we have
%\begin{align*}
%			\E[L^0_t(\zeta)]&= \E[\int_0^t\mathbf{1}_{\{U_s-Z_s=0\}}dL^0_s]= \E[\int_0^t\mathbf{1}_{\{(s,S_s,Y_s)\in \partial\mathcal{E}\}}\mu(s,S_s,Y_s)k(s,S_s,Y_s)ds]\\
%			&=\int_0^tds\int_{\partial\mathcal{E}_s}dxdy \ \mu(s,x,y)p(s,x,y)=0.
%			\end{align*}	
		\end{proof}
We can now prove Proposition \ref{EEP}. 
\begin{proof}[Proof of Proposition \ref{EEP}]
		Thanks to \eqref{A} and Lemma \ref{local_time}  we can rewrite \eqref{doob_dec} as
		\begin{align*}
		U_t&=M_t + \int_0^t \mathbf{1}_{\{U_s=Z_s\}}da_s=M_t+ \int_0^t e^{-rs}(\mathcal{L}-r)\varphi(S_s)\mathbf{1}_{\{S_s\leq b(s,Y_s)\}}ds,
		\end{align*}
		where the last equality derives from the application of the It\^{o} formula to the discounted payoff $Z$. In particular, we have
		\begin{align*}
		U_0= M_0=\E[ M_T]&=\E[U_T]-\E\left[\int_0^T e^{-rs}(\mathcal{L}-r)\varphi(S_s)\mathbf{1}_{\{S_s\leq b(s,Y_s)\}}ds\right]\\&=\E[U_T]-\int_0^T e^{-rs}\E[(\delta S_s  -rK)\mathbf{1}_{\{S_s\leq b(s,Y_s)\}}]ds.
		\end{align*}
		
		The assertion follows  recalling that $U_0=P(0,S_0,Y_0)$ and $\E[U_T]=\E[Z_T]=\E[e^{-rT}(K-S_T)_+]$, which corresponds to the price $P_e(0,S_0,Y_0)$ of an European put with maturity $T$ and strike price $K$. 
\end{proof}
		\subsection{Smooth fit}
In this section we analyse the behaviour of the derivatives of the value function with respect to the $s$ and $y$ variables on the boundary of the continuation region.  In other words, we prove a weak formulation of the so called smooth fit principle. 

In order to do this,  we need  two technical lemmas whose proofs can be found in the appendix. The first one is a general result about  the behaviour of the  trajectories of the CIR process.
\begin{lemma}\label{lemmaY}
	For all $y\geq 0$ we have, with probability one,
	$$
	\limsup_{t\downarrow 0 }\frac{Y^y_t-y}{\sqrt{2t\ln\ln(1/t)}}=-	\liminf_{t\downarrow 0 }\frac{Y^y_t-y}{\sqrt{2t\ln\ln(1/t)}}=\sigma\sqrt{y}.
	$$
\end{lemma}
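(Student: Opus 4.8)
The plan is to split $Y^y$ into its drift and martingale parts, discard the drift as lower-order, and then reduce the assertion to the classical law of the iterated logarithm for Brownian motion at the origin, transported through a Dambis--Dubins--Schwarz time change. Write $Y^y_t-y=D_t+M_t$ with $D_t=\int_0^t\kappa(\theta-Y^y_s)\,ds$ and $M_t=\sigma\int_0^t\sqrt{Y^y_s}\,dW_s$, and set $h(u)=\sqrt{2u\ln\ln(1/u)}$, which is increasing on some interval $(0,u_0)$ and satisfies $h(cu)\sim\sqrt{c}\,h(u)$ as $u\downarrow0$ for each fixed $c>0$ (since $\ln\ln(1/(cu))\sim\ln\ln(1/u)$), hence more generally $a_t\sim b_t\Rightarrow h(a_t)\sim h(b_t)$. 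Since $s\mapsto Y^y_s$ is continuous it is bounded on a random neighbourhood of $0$, so $|D_t|\le C(\omega)\,t=o(h(t))$, and the drift contributes nothing to the upper or lower limit; it therefore suffices to analyse $M_t/h(t)$. Its bracket is $\langle M\rangle_t=\sigma^2\int_0^t Y^y_s\,ds$, and by continuity of $Y^y$ at $0$ one has $\langle M\rangle_t/t\to\sigma^2 y$ almost surely, so that $h(\langle M\rangle_t)\sim\sigma\sqrt{y}\,h(t)$ when $y>0$.

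For $y>0$, I would invoke Dambis--Dubins--Schwarz to get a standard Brownian motion $\beta$ (possibly on an enlarged space) with $M_t=\beta_{\langle M\rangle_t}$. On the full-probability event on which $Y^y$ is continuous, stays positive near $0$ and satisfies $\langle M\rangle_t/t\to\sigma^2 y$, the map $t\mapsto\langle M\rangle_t$ is a continuous, strictly increasing bijection between two neighbourhoods of $0$ with $\langle M\rangle_0=0$, so it is a legitimate reparametrisation as $t\downarrow0$. By the law of the iterated logarithm for $\beta$ at the origin,
$$
\limsup_{t\downarrow0}\frac{M_t}{h(\langle M\rangle_t)}=\limsup_{u\downarrow0}\frac{\beta_u}{h(u)}=1,\qquad \liminf_{t\downarrow0}\frac{M_t}{h(\langle M\rangle_t)}=-1,
$$
and multiplying by $h(\langle M\rangle_t)/h(t)\to\sigma\sqrt{y}$ (a positive a.s.\ constant) yields $\limsup_{t\downarrow0}M_t/h(t)=\sigma\sqrt{y}$ and $\liminf_{t\downarrow0}M_t/h(t)=-\sigma\sqrt{y}$, which together with the negligibility of $D_t$ gives the claim.

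For $y=0$ only the upper LIL bound is available, since $\langle M\rangle_t=o(t)$. Given $c>0$, I would pick (randomly) $\delta>0$ with $Y^0_s\le c$ for $s\le\delta$, so $\langle M\rangle_t\le\sigma^2 c t$ on $[0,\delta]$; then, using $\limsup_{u\downarrow0}|\beta_u|/h(u)\le1$, the monotonicity of $h$ near $0$, and $h(\sigma^2 c t)\sim\sigma\sqrt{c}\,h(t)$, one gets $\limsup_{t\downarrow0}|M_t|/h(t)\le\sigma\sqrt{c}$. Letting $c\downarrow0$ gives $M_t=o(h(t))$, and combined with $D_t=o(h(t))$ this shows $(Y^0_t-0)/h(t)\to0=\sigma\sqrt{0}$, as required.

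The delicate points I expect are: first, justifying that $u=\langle M\rangle_t$ is a genuine reparametrisation near $0$ (continuity, strict monotonicity, $\langle M\rangle_0=0$) so that the limsup/liminf are preserved, together with checking that the $\ln\ln$ factor inside $h$ behaves correctly under $\langle M\rangle_t\sim\sigma^2 y t$; and second, the degenerate case $y=0$, where $\langle M\rangle_t=o(t)$ prevents a clean time change and one must instead conclude from the arbitrariness of the bound $Y^0_s\le c$.
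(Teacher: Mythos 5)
Your proof is correct, and it takes a route that is close in spirit to the paper's but differs in how the law of the iterated logarithm is extracted. The paper writes
\[
Y^y_t-y=\sigma\sqrt{y}\,W_t+\kappa\int_0^t(\theta-Y^y_s)\,ds+\sigma\int_0^t\bigl(\sqrt{Y^y_s}-\sqrt{y}\bigr)\,dW_s,
\]
reads off the claimed $\limsup$ and $\liminf$ directly from the exact Brownian term $\sigma\sqrt{y}\,W_t$, and reduces the whole lemma to a single auxiliary fact: if $H$ is predictable with $H_t\to 0$ a.s., then $\int_0^t H_s\,dW_s=o\bigl(\sqrt{2t\ln\ln(1/t)}\bigr)$ (which the paper proves via Dambis--Dubins--Schwarz plus the upper LIL, exactly the estimate $f(\varepsilon t)/f(t)\to\sqrt\varepsilon$ you also use). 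This decomposition buys a uniform treatment of $y=0$: when $y=0$ the Brownian term vanishes, $H_s=\sqrt{Y^y_s}\to 0$, and the same lemma gives the limit $0=\sigma\sqrt{0}$ with no separate case. Your approach instead time-changes the full martingale $M_t=\sigma\int_0^t\sqrt{Y^y_s}\,dW_s$ and transfers the two-sided LIL of $\beta$ at the origin through $u=\langle M\rangle_t$. This is equally valid, and arguably more transparent about where $\sigma\sqrt y$ comes from (namely $\langle M\rangle_t\sim\sigma^2 y\,t$), but it forces you to argue separately that the time change is a genuine reparametrisation when $y>0$ and to run a different $\varepsilon$-type argument when $y=0$, precisely because the time change degenerates there. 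The paper's additive extraction of $\sigma\sqrt{y}\,W_t$ avoids both of these case distinctions at the cost of introducing the extra remainder martingale; both routes rely on the same ingredients (DDS, local LIL, the scaling $f(ct)\sim\sqrt{c}\,f(t)$), so neither is strictly simpler.
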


The second one is a result about the behaviour of the trajectories of a standard Brownian motion. 
\begin{lemma}\label{lemmaBM}
	Let $(B_t)_{t\geq 0}$ be a standard Brownian motion and let  $(t_n)_{n\in\N}$ be a deterministic sequence of positive numbers with $\lim_{n\rightarrow\infty}t_n=0$. We have, with probability one, 

\begin{equation}\label{detILL0}
\liminf_{n\rightarrow \infty }\frac{B_{t_n}}{\sqrt{t_n}}=-\infty
\end{equation}
\end{lemma}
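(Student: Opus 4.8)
The plan is to prove the liminf statement for a fixed deterministic sequence $t_n \downarrow 0$ by a Borel--Cantelli argument exploiting the independence of increments of Brownian motion. The key observation is that for a \emph{fixed} sequence we do not need a uniform modulus of continuity (which would give the wrong sign in the law of the iterated logarithm); instead we can pass to a sparse subsequence of the $t_n$ and use the independence of the increments $B_{t_{n_{k+1}}} - B_{t_{n_k}}$.

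First I would reduce to showing that for every $c > 0$ one has $B_{t_n}/\sqrt{t_n} \leq -c$ infinitely often, almost surely; intersecting over $c \in \N$ then gives $\liminf_n B_{t_n}/\sqrt{t_n} = -\infty$ on a set of full measure. To get this for fixed $c$, I would extract a subsequence $(t_{n_k})_k$ decreasing fast enough that the ratios $t_{n_{k+1}}/t_{n_k} \to 0$ (possible since $t_n \to 0$; pass to a subsequence where $t_{n_{k+1}} \le t_{n_k}/2$, say, and then thin further as needed). Write $B_{t_{n_k}} = \bigl(B_{t_{n_k}} - B_{t_{n_{k+1}}}\bigr) + B_{t_{n_{k+1}}}$. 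The increments $D_k := B_{t_{n_k}} - B_{t_{n_{k+1}}}$ are independent Gaussians with variance $t_{n_k} - t_{n_{k+1}}$, which is comparable to $t_{n_k}$ when the ratios go to $0$. The event $\{D_k \le -2c\sqrt{t_{n_k}}\}$ has probability bounded below by a constant $p > 0$ independent of $k$ (a standard Gaussian tail estimate), and these events are independent, so by the second Borel--Cantelli lemma infinitely many of them occur almost surely. On such an event, $B_{t_{n_k}} \le -2c\sqrt{t_{n_k}} + B_{t_{n_{k+1}}}$, and one still has to control the correction term $B_{t_{n_{k+1}}}$.

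To control $B_{t_{n_{k+1}}}$ I would use that, almost surely, $B_t = o\bigl(\sqrt{t \ln\ln(1/t)}\bigr)$ as $t \downarrow 0$ is false in the $\limsup$ sense but \emph{is} true that $|B_t| \le 2\sqrt{t\ln\ln(1/t)}$ for all small $t$ by Khinchine's law of the iterated logarithm (the $\limsup$ of $B_t/\sqrt{2t\ln\ln(1/t)}$ is $1$, hence eventually $|B_t| \le 2\sqrt{2 t \ln\ln(1/t)}$). Since $t_{n_{k+1}} \le t_{n_k}/2$ and, more importantly, we may have thinned so that $\sqrt{t_{n_{k+1}} \ln\ln(1/t_{n_{k+1}})} = o\bigl(\sqrt{t_{n_k}}\bigr)$, the correction $|B_{t_{n_{k+1}}}|$ is eventually much smaller than $c\sqrt{t_{n_k}}$; hence on infinitely many $k$ we get $B_{t_{n_k}} \le -2c\sqrt{t_{n_k}} + c\sqrt{t_{n_k}} = -c\sqrt{t_{n_k}}$, i.e. $B_{t_{n_k}}/\sqrt{t_{n_k}} \le -c$. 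This yields the claim for that $c$, and then one intersects over $c \in \N$.

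The main obstacle is organizing the thinning of the sequence correctly: one needs the single subsequence $(t_{n_k})$ to simultaneously make the increment variances $t_{n_k} - t_{n_{k+1}}$ comparable to $t_{n_k}$ (so the Gaussian tail probabilities stay bounded below) and make the LIL correction term at scale $t_{n_{k+1}}$ negligible compared to $\sqrt{t_{n_k}}$. Both are easily arranged since $t_n \to 0$ allows passing to an arbitrarily sparse subsequence, but the argument must be written so that the two requirements do not conflict — choosing, e.g., $t_{n_{k+1}}$ so small that $t_{n_{k+1}} \ln\ln(1/t_{n_{k+1}}) \le t_{n_k}/k^2$ handles both at once. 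A secondary point to state carefully is that the conclusion for the \emph{original} sequence follows because a subsequence of it already has the desired property, and $\liminf$ over the full sequence is at most the $\liminf$ over any subsequence.
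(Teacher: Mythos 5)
Your proof is correct, and it takes a genuinely different route from the paper's. The paper works with the time-inverted statement ($\limsup_{n}B_{t_n}/\sqrt{t_n}=+\infty$ with $t_n\to\infty$) and argues directly on the joint law of the variables $B_{t_{n_k}}/\sqrt{t_{n_k}}$: for well-separated time scales the pairwise covariances $\frac{t_{n_j}\wedge t_{n_k}}{\sqrt{t_{n_j}t_{n_k}}}$ tend to $0$, so the finite-dimensional laws converge to i.i.d.\ standard Gaussians, whence $\P\bigl(\bigcap_{k\le K}\{B_{t_{n_k}}/\sqrt{t_{n_k}}\le c\}\bigr)\to \P(Z\le c)^K\to 0$ along a suitably nested subsequence. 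Your argument instead decomposes $B_{t_{n_k}}=(B_{t_{n_k}}-B_{t_{n_{k+1}}})+B_{t_{n_{k+1}}}$, uses the \emph{exact} independence of increments over disjoint intervals together with the second Borel--Cantelli lemma to make the increment term $\le -2c\sqrt{t_{n_k}}$ infinitely often, and controls the remainder $B_{t_{n_{k+1}}}$ via the law of the iterated logarithm after a sufficiently aggressive thinning. Both proofs hinge on extracting a sparse subsequence to create (near-)independence, but yours invokes the LIL as an external input while producing a fully rigorous Borel--Cantelli argument in the standard mold; the paper's is more self-contained (no LIL) but leaves the ``iterate this procedure'' nesting of subsequences to the reader. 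One cosmetic point: in the sentence invoking the LIL you first write $|B_t|\le 2\sqrt{t\ln\ln(1/t)}$ and only afterwards correct to the right bound $|B_t|\le 2\sqrt{2t\ln\ln(1/t)}$ in the parenthetical; only the latter is what the LIL gives, and it is the one you actually use, so the argument stands, but the first formula should be removed to avoid confusion.
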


%We stress that Lemma \ref{lemmaBM} in other words states that, for any deterministic sequence $t_n$ which tends to infinity,  $B_{t_n}>0$ infinitely often.

We are now in a position to prove the following smooth fit result.
\begin{proposition}\label{sfx}
		For any $(t,y)\in [0,T)\times [0,\infty)$ we have $\frac{\partial}{\partial s} P(t,b(t,y),y)=\varphi'(b(t,y))$.
		\end{proposition}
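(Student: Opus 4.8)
The plan is to prove the smooth-fit identity $\partial_s P(t,b(t,y),y)=\varphi'(b(t,y))$ by showing the two one-sided inequalities for the right-hand derivative at the boundary point $s_0=b(t,y)$. Since $P$ is convex in $s$ (Proposition \ref{properties_P}) and $P(t,s,y)=\varphi(s)=(K-s)_+$ for $s\le s_0$, the left derivative at $s_0$ equals $\varphi'(s_0)=-1$ (note $s_0=b(t,y)<K$, so $\varphi$ is smooth near $s_0$ from the left). Convexity then gives $\partial_s^+ P(t,s_0,y)\ge \varphi'(s_0)$ automatically; the whole content is the reverse inequality $\partial_s^+ P(t,s_0,y)\le \varphi'(s_0)$, equivalently
$$
\limsup_{h\downarrow 0}\frac{P(t,s_0+h,y)-P(t,s_0,y)}{h}\le -1 .
$$
Without loss of generality take $t=0$ and write $s_0=b(0,y)$.

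The main step is a probabilistic comparison at a short horizon. For $h>0$ small, let $\tau_h$ be a well-chosen stopping time (an exit time from a small space-time neighbourhood of $(0,s_0,y)$, or simply a short deterministic time) and use $P(0,s_0+h,y)\ge \E[e^{-r\tau_h}(K-S^{s_0+h,y}_{\tau_h})_+]$ together with $P(0,s_0,y)=(K-s_0)_+=K-s_0$. Writing $S^{s,y}_u=sM^y_u$ with $M^y_u$ as in the strict-convexity proof, the difference $P(0,s_0+h,y)-P(0,s_0,y)$ is bounded below by a quantity of the form $\E[e^{-r\tau_h}(K-(s_0+h)M^y_{\tau_h})_+]-(K-s_0)$. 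The idea is to choose the neighbourhood so that, with overwhelming probability as $h\downarrow 0$, the path $(S_u,Y_u)$ stays inside the exercise region up to $\tau_h$ — here I would use that $(s_0,y)\in\partial\mathcal E$ together with the monotonicity and semicontinuity of $b$ (right-continuity in $t$, left-continuity in $y$), and crucially Lemma \ref{lemmaY} and Lemma \ref{lemmaBM}, which control the small-time fluctuations of $Y^y_t$ and of the driving Brownian motion $B$ so that $S_u$ does not overshoot $b(u,Y_u)$ from below too quickly. On that event $(K-(s_0+h)M^y_{\tau_h})_+=K-(s_0+h)M^y_{\tau_h}$, and one computes
$$
\E[e^{-r\tau_h}(K-(s_0+h)M^y_{\tau_h})]-(K-s_0)
= -h\,\E[e^{-r\tau_h}M^y_{\tau_h}] + \big(K\E[e^{-r\tau_h}]-s_0\E[e^{-r\tau_h}M^y_{\tau_h}]\big)+o(h),
$$
using $\E[M^y_{\tau_h}]=\E[e^{(r-\delta)\tau_h}]\to1$ and $\E[e^{-r\tau_h}]\to1$ and $\tau_h\to0$. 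Dividing by $h$ and letting $h\downarrow0$, the first term contributes $-1$; the bracketed term, being $\E[e^{-r\tau_h}((K-s_0M^y_{\tau_h})-s_0(M^y_{\tau_h}-1)+\dots)]$, is $o(h)$ once one checks $\E[e^{-r\tau_h}(K-s_0)-\dots]$ carefully — more precisely $K\E[e^{-r\tau_h}]-s_0\E[e^{-r\tau_h}M^y_{\tau_h}]\to K-s_0$ so it does not vanish, and one must instead argue that on the good event $(K-(s_0+h)M^y_{\tau_h})_+$ compared against the obstacle value at $s_0$ gives exactly the $-h$ term plus controlled errors; I would organize this by comparing to the European-type quantity and invoking that the contribution of $a_t$ from the early-exercise decomposition (Proposition \ref{EEP}) over $[0,\tau_h]$ is $O(\tau_h)=o(1)$ uniformly.

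I expect the genuine obstacle to be showing that the path stays in $\mathcal E$ long enough: one needs the exit time $\tau_h$ (from the region $\{(u,s,z): s\le b(u,z)\}$ started just above the boundary at height $s_0+h$) to satisfy $\tau_h\to0$ in probability \emph{and} $h/\sqrt{\tau_h}\to 0$ is \emph{not} what happens — rather $\tau_h$ is of order $h^2$ or smaller, so that $\E[e^{-r\tau_h}M^y_{\tau_h}]-1=O(\tau_h)=O(h^2)=o(h)$, making the correction terms negligible against the linear term $-h$. Getting the right scaling of $\tau_h$ and ruling out that the $Y$-fluctuations (which by Lemma \ref{lemmaY} are of order $\sqrt{t\ln\ln(1/t)}$) push the point out of $\mathcal E$ prematurely — because $b$ is only semicontinuous in $y$, a downward excursion of $Y$ could in principle shrink $b(u,Y_u)$ — is the delicate part; Lemma \ref{lemmaBM} is presumably what handles the possibility that the $X$-fluctuation is negative along a subsequence, keeping $S_u$ below the (possibly lower) boundary. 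Once the good event has probability $\to1$ and $\tau_h=O(h^2)$, combining with the trivial convexity bound $\partial_s^+P(0,s_0,y)\ge-1$ yields $\partial_s^+P(0,s_0,y)=-1=\varphi'(b(0,y))$, and since $P$ is $C^1$ in $s$ on the open continuation region and equals $\varphi$ (which is $C^1$ near $s_0<K$) on the exercise side, the two-sided derivative at $b(t,y)$ exists and equals $\varphi'(b(t,y))$.
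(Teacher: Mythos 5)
Your high-level plan (show $\partial_s^- P=\varphi'$ trivially from $P=\varphi$ on the exercise side, show $\liminf\geq\varphi'$ from $P\geq\varphi$, and then obtain the nontrivial $\limsup\leq\varphi'$ by a probabilistic comparison at a vanishing stopping time, with Lemmas \ref{lemmaY} and \ref{lemmaBM} controlling the small-time behaviour of $Y$ and of the driving Brownian motion) matches the paper's proof. However, the central inequality in your sketch points the wrong way and would only reproduce the trivial direction.

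Concretely: you take $\tau_h$ to be a hand-chosen exit time (or a deterministic time) and write $P(0,s_0+h,y)\geq\E\bigl[e^{-r\tau_h}(K-S^{s_0+h,y}_{\tau_h})_+\bigr]$, then subtract $P(0,s_0,y)=K-s_0$. This produces a \emph{lower} bound on $P(0,s_0+h,y)-P(0,s_0,y)$. Dividing by $h$ and passing to the limit gives $\liminf_{h\downarrow0}\frac{P(0,s_0+h,y)-P(0,s_0,y)}{h}\geq -1$, which is the inequality you already had for free from $P\geq\varphi$; it cannot yield the needed $\limsup\leq -1$. The correct argument (which the paper uses) reverses the roles: take $\tau_h$ to be the \emph{optimal} stopping time for $P(0,s_0+h,y)$ so that $P(0,s_0+h,y)=\E\bigl[e^{-r\tau_h}\varphi\bigl((s_0+h)M^y_{\tau_h}\bigr)\bigr]$ with equality, while at $s_0$ one only has the lower bound $P(0,s_0,y)\geq\E\bigl[e^{-r\tau_h}\varphi\bigl(s_0M^y_{\tau_h}\bigr)\bigr]$; subtracting gives the \emph{upper} bound $\frac{P(0,s_0+h,y)-P(0,s_0,y)}{h}\leq\E\bigl[e^{-r\tau_h}\tfrac{\varphi((s_0+h)M^y_{\tau_h})-\varphi(s_0M^y_{\tau_h})}{h}\bigr]$, after which $\tau_h\to0$ a.s.\ and the Lipschitz bound on $\varphi$ plus dominated convergence close the argument.

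A secondary remark: you speculate that one needs $\tau_h=O(h^2)$ to make correction terms negligible. That estimate is neither needed nor established; once the comparison is set up with the optimal $\tau_h$ as above, the only fact required about $\tau_h$ is that $\lim_{h\downarrow0}\tau_h=0$ a.s.\ (no rate). The paper obtains this by producing, via Lemma \ref{lemmaY}, a deterministic-in-law sequence $t_n\downarrow0$ with $Y^y_{t_n}=y$, then using Dubins--Schwarz and Lemma \ref{lemmaBM} to extract a further subsequence with $M^y_{t_n}<1$; monotonicity of $t\mapsto b(t,y)$ then forces $\tau_h\leq t_n$ once $h$ is small, giving $\tau_h\to0$. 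Your mention of Proposition \ref{EEP} is not needed here.
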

		\begin{proof}
			The general idea of the proof goes back to \cite{B} for the Brownian motion (see also \cite[Chapter 4]{PS}).  Without loss of generality we can fix $t=0$. 
		Note that, for $h>0$, since $b(0,y)-h\leq b(0,y)$, we have
		$$
	\frac{	P(0,b(0,y)-h,y)-P(0,b(0,y),y)}{h}= \frac{	\varphi(b(0,y)-h)-\varphi (b(0,y))}{h},		
		$$
		so that, since $\varphi$ is continuously differentiable near $b(0,y)$, $		\frac{\partial ^-}{\partial s}P(0,b(0,y),y)=\varphi'(b(0,y))$. 
		
		On the other hand, for $h>0$	small enough, 	since $P\geq \varphi$ and $P(0,b(0,y),y)=\varphi(b(0,y))$, we get
		\begin{align*}
		\frac{	P(0,b(0,y)+h,y)-P(0,b(0,y),y)}{h}\geq \frac{	\varphi(b(0,y)+h)-\varphi (b(0,y))}{h},
		\end{align*}
	so that 
		\begin{align*}
		\liminf_{h\downarrow0}  \frac{	P(0,b(0,y)+h,y)-P(0,b(0,y),y)}{h} \geq \varphi'(b(0,y)).
		\end{align*}
		Now, for the other inequality, we consider the optimal stopping time related to $P(0,b(0,y)+h,y)$, i.e.
		\begin{align*}
		\tau_h=\inf\{ t\in [0,T) \mid S_t^{0,b(0,y)+h,y}< b(t,Y^y_t)\}\wedge T=\inf\left \{ t\in [0,T) \mid M_t^{y}\leq\frac{b(t,Y^y_t)}{b(0,y)+h}\right\}\wedge T,
		\end{align*}
		where $M^y_t=S^{1,y}_t$.
Recall that $P(0,b(0,y),y)\geq \E\big(e^{-r\tau_h}\varphi(b(0,y)M^y_{\tau_h})\big)$, so we can write
		\begin{align*}
		\frac{ 	P(0,b(0,y)+h,y)-P(0,b(0,y),y)}{h}&= \frac{\E \left(     e^{-r\tau_h}  \varphi((b(0,y)+h)M^y_{\tau_h}\right)  -   P(0,b(0,y),y)}{h}\\&\leq \E\left(   e^{-r\tau_h}   \frac{        \varphi\left((b(0,y)+h)M^y_{\tau_h}\right)  -   \varphi\left(b(0,y)M^y_{\tau_h}\right)}{h}     \right).
		\end{align*}
		Assume for the moment  that 
			\begin{equation}\label{ot}
			\lim_{h\rightarrow 0 }\tau_h=0,\quad a.s.
			\end{equation}
so we have
		$$
		\lim_{h\downarrow 0 }   \frac{        \varphi((b(0,y)+h)M^y_{\tau_h})\big)  -   \varphi(b(0,y)M^y_{\tau_h})}{h}=\varphi'(b(0,y)).
		$$
		Moreover, recall that $ M^y_{\tau_h}\leq \frac{b(t,Y^y_t)}{b(0,y)+h}\leq \frac K {b(0,y)}$ if $\tau_h<T$ and $ M^y_{\tau_h}=M^y_T$ if $\tau_h=T$. Therefore, by using the fact that $\varphi$ is Lipschitz continuous and the dominated convergence, we obtain
		$$
		\limsup_{h\downarrow 0 } \frac{ 	P(0,b(0,y)+h,y)-P(0,b(0,y),y)}{h} \leq \varphi'( b(0,y))
		$$
		and the assertion is proved. 
		
	It remains to prove \eqref{ot}. Since $t\mapsto b(t,y) $ is nondecreasing, if $	M^y_t< \frac{b(0,y)}{b(0,y)+h}$ and $Y^y_t= y$,
%		$$
%		M^y_t< \frac{b(0,y)}{b(0,y)+h} \quad \mbox{ and } \quad Y^y_t= y,
%		$$
		we have
		$$
		M^y_t< \frac{b(0,y)}{b(0,y)+h} \leq \frac{b(t,Y^y_t)}{b(0,y)+h},
		$$
		so that
\begin{equation}
		\tau_h \leq 	\inf \left\{  t\geq 0 \mid  M^y_t< \frac{b(0,y)}{b(0,y)+h} \mbox{ \& } Y^y_t= y \right\}.
\end{equation}
We now show that we can find a sequence $t_n \downarrow 0 $ such that  $Y^y_{t_n}=0$ and $ M^y_{t_n}<1$.
	First, recall that with a standard transformation we can write
	\begin{equation}\label{incorrelation}
	\begin{cases}
	\frac{dS_t}{S_t}=(r-\delta)dt +  \sqrt{Y_t}(\sqrt{1-\rho^2}d\bar W_t+\rho dW_t),\qquad&S_0=s>0,\\ dY_t=\kappa(\theta-Y_t)dt+\sigma\sqrt{Y_t}dW_t, &Y_0=y\geq 0,
	\end{cases}
	\end{equation}
	where $\bar W$ is a standard  Brownian motion independent of $W$.	Set $\Lambda_t^y=\ln M^y_t$. We deduce from Lemma \ref{lemmaY} that there exists a sequence  $t_n\downarrow 0$ such that  $Y^y_{t_n} =y$ $\P_y$-a.s. . Therefore,  from \eqref{incorrelation} we can write  $\int_0^{t_n}\sqrt{Y^{y}_s}dW_s=-\frac \kappa \sigma \int_0^{t_{n}}(\theta-Y^{y}_s)ds$ for all $n\in \N$. So, we have
	$$
	\Lambda^y_{t_{n}}=(r-\delta)t_{n}-\int_0^{t_{n}} \frac{Y^y_s}{2}   ds +\sqrt{1-\rho^2}\int_0^{t_{n}}\sqrt{Y^y_s} d\bar W_s -\frac {\rho\kappa} \sigma \int_0^{t_{n}}(\theta-Y^{y}_s)ds .
	$$
Conditioning with respect to $W$  we have 
\begin{align*}
\liminf_{n\rightarrow \infty} \Lambda_{t_n}^y=&\liminf_{n\rightarrow\infty}	\frac{(r-\delta)t_{n}}{\sqrt{\int_0^{t_{n}}Y^{y}_sds}}-\frac{\int_0^{t_{n}} \frac{Y^y_s}{2}   ds}{   \sqrt{\int_0^{t_{n}}Y^{y}_sds}        } +\frac{\sqrt{1-\rho^2}\int_0^{t_{n}}\sqrt{Y^y_s} d\bar W_s  }{\sqrt{\int_0^{t_{n}}Y^{y}_sds} }- \frac{\frac {\rho\kappa} \sigma \int_0^{t_{n}}(\theta-Y^{y}_s)ds }{\sqrt{\int_0^{t_{n}}Y^{y}_sds} }\\
&=\liminf_{n\rightarrow\infty}	\frac{(r-\delta)t_{n}}{\sqrt{\int_0^{t_{n}}Y^{y}_sds}}-\frac{\int_0^{t_{n}} \frac{Y^y_s}{2}   ds}{   \sqrt{\int_0^{t_{n}}Y^{y}_sds}        } +\frac{\sqrt{1-\rho^2}\tilde  W_{ \int_0^{t_{n}}Y^y_sds }}{\sqrt{\int_0^{t_{n}}Y^{y}_sds} }- \frac{\frac {\rho\kappa} \sigma \int_0^{t_{n}}(\theta-Y^{y}_s)ds }{\sqrt{\int_0^{t_{n}}Y^{y}_sds} }=-\infty,
\end{align*}
where  we have used the Dubins-Schwartz Theorem and we have applied Lemma \ref{lemmaBM}  to the standard Brownian motion $\tilde W$ and the sequence  $\sqrt{\int_0^{t_{n}}Y^{y}_sds}$ which can be considered deterministic.

We deduce that, up to extract a subsequence of $t_n$, we have $\Lambda^y_{t_n}<0$ and,  as a consequence, $M^y_{t_n}<1$. Therefore, for any any fixed $n$, there exists  $h$ small enough such that 
$M^y_{t_{n}} <  \frac{b(0,y)}{b(0,y)+h}$  so that, by definition,  $\tau_h\leq t^n$. We conclude the proof passing to the limit as $n$ goes to infinity.

		\end{proof}
		As regards the derivative with respect to the $y$ variable, we have the following result.
		\begin{proposition}\label{prop _sfy}
	If $2\kappa\theta\geq \sigma^2$, for any $(t,y)\in [0,T)\times (0,\infty)$ we have  $\frac{\partial}{\partial y} P(t,b(t,y),y)=0$.
		\end{proposition}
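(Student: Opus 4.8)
The plan is to adapt the argument of Proposition~\ref{sfx}, this time perturbing the volatility variable. Fix $(t,y)$ with $y>0$; by translation in time we may assume $t=0$. The left derivative costs nothing: since $y\mapsto b(0,y)$ is nonincreasing, for every $y'\in(0,y]$ we have $b(0,y')\ge b(0,y)$, hence $(0,b(0,y),y')\in\mathcal E$ and $P(0,b(0,y),y')=\varphi(b(0,y))$; thus $y'\mapsto P(0,b(0,y),y')$ is constant on $(0,y]$ and $\frac{\partial^-}{\partial y}P(0,b(0,y),y)=0$. For the right derivative, the monotonicity of $y\mapsto P(0,s,y)$ (Proposition~\ref{properties_P}) gives $\liminf_{h\downarrow0}h^{-1}\bigl(P(0,b(0,y),y+h)-P(0,b(0,y),y)\bigr)\ge0$, so it remains only to prove the matching upper bound $\limsup_{h\downarrow0}h^{-1}\bigl(P(0,b(0,y),y+h)-P(0,b(0,y),y)\bigr)\le0$.

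Write $S^{0,s,y}_t=sM^y_t$ with $M^y_t=S^{1,y}_t$, and let $\tau_h=\inf\{t\in[0,T):b(0,y)M^{y+h}_t\le b(t,Y^{y+h}_t)\}\wedge T$ be the optimal stopping time for $P(0,b(0,y),y+h)$. Using $\tau_h$ as a (suboptimal) stopping time for the problem defining $P(0,b(0,y),y)$, together with the fact that $\varphi$ is $1$-Lipschitz and $e^{-r\tau_h}\le1$,
\begin{equation*}
\frac{P(0,b(0,y),y+h)-P(0,b(0,y),y)}{h}\;\le\;\frac{b(0,y)}{h}\,\E\bigl[\,|M^{y+h}_{\tau_h}-M^y_{\tau_h}|\,\bigr],
\end{equation*}
so everything reduces to showing that the right-hand side tends to $0$. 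This splits into two independent facts: (i) $\tau_h\to0$ almost surely as $h\downarrow0$, and (ii) an $h^{-1}$-scaled control of $|M^{y+h}_{\tau_h}-M^y_{\tau_h}|$.

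For (i): if $b(0,y+h)=b(0,y)$ then $(0,b(0,y),y+h)\in\mathcal E$, $\tau_h=0$, and the quotient vanishes; otherwise, exactly as in the proof of Proposition~\ref{sfx} (via Lemmas~\ref{lemmaY} and~\ref{lemmaBM}, conditioning on $W$ and applying the Dubins--Schwartz theorem) there exists, almost surely, a sequence $t_n\downarrow0$ with $Y^y_{t_n}=y$ and $M^y_{t_n}\to0$. Fixing such a sequence and using that $M^{y+h}_{t_n}\to M^y_{t_n}$ and $Y^{y+h}_{t_n}\to Y^y_{t_n}=y$ as $h\downarrow0$ with $Y^{y+h}_{t_n}\ge y$, one has, for $n$ large and then $h$ small enough, $Y^{y+h}_{t_n}\le2y$ and $b(0,y)M^{y+h}_{t_n}<b(0,2y)$; since $b(0,2y)>0$ by Proposition~\ref{positivity} and $b(t_n,Y^{y+h}_{t_n})\ge b(t_n,2y)\ge b(0,2y)$, this forces $b(0,y)M^{y+h}_{t_n}<b(t_n,Y^{y+h}_{t_n})$, hence $\tau_h\le t_n$, so that $\limsup_{h\downarrow0}\tau_h\le t_n$ for every $n$, whence $\tau_h\to0$. (Bounding $b(t_n,Y^{y+h}_{t_n})$ from below by the fixed constant $b(0,2y)$ is what makes the argument robust to the fact that $b(t,\cdot)$ is only left-continuous.) For (ii): under the Feller condition the computation in the proof of Proposition~\ref{prop-lipschitz} yields the pathwise bound $|\sqrt{Y^{y+h}_s}-\sqrt{Y^y_s}|\le h/(2\sqrt y)$ for all $s$; feeding this into the linear stochastic differential equation satisfied by $t\mapsto M^{y+h}_t-M^y_t$ (equivalently, writing $M^{y+h}_{\tau_h}-M^y_{\tau_h}=\int_y^{y+h}\partial_wM^w_{\tau_h}\,dw$, where $\partial_wM^w$ solves a linear SDE with zero initial value), one gets $h^{-1}|M^{y+h}_{\tau_h}-M^y_{\tau_h}|\to0$ almost surely because $\tau_h\to0$, and a localization argument — stopping $M^w$ at its first passage above a level $R$, combined with the supermartingale maximal inequality applied to $e^{-(r-\delta)t}M^w_t$ and the standard fact that the Heston price admits a finite moment of some order $p>1$ on $[0,T]$ — furnishes an integrable majorant, so that dominated convergence gives $h^{-1}\E[|M^{y+h}_{\tau_h}-M^y_{\tau_h}|]\to0$.

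The main obstacle is step (ii): producing an integrable majorant for $h^{-1}|M^{y+h}_{\tau_h}-M^y_{\tau_h}|$. This is precisely where the Feller assumption $2\kappa\theta\ge\sigma^2$ enters — it keeps $Y^y$ strictly positive and makes $y\mapsto Y^y_\cdot$ Lipschitz with the explicit almost-sure bound above (and, correspondingly, $y\mapsto u(t,x,y)$ locally Lipschitz, cf.\ Proposition~\ref{prop-lipschitz}), so that the flow of the discounted price is well defined and quantitatively controlled; without it the square-root coefficient fails to be Lipschitz on $\{Y=0\}$ and the estimate collapses. A secondary subtlety is the proof that $\tau_h\to0$, which has to be handled with the fixed-level device above because the free boundary is not (yet) known to be continuous.
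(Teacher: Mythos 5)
Your architecture matches the paper's proof of Proposition~\ref{prop _sfy} closely: the left derivative vanishes because $(b(0,y),y')\in\mathcal E$ for $y'\le y$, the right $\liminf$ is $\ge 0$ by monotonicity, and for the right $\limsup$ you plug the stopping time $\tau_h$ — optimal for the $(y+h)$-problem, suboptimal for the $y$-problem — and the Lipschitz bound on $\varphi$, reducing everything to $\tau_h\to0$ almost surely and to $h^{-1}\E\big[|M^{y+h}_{\tau_h}-M^y_{\tau_h}|\big]\to0$. Your argument for $\tau_h\to0$ uses the same ingredients (Lemmas~\ref{lemmaY} and~\ref{lemmaBM}, giving $t_n\downarrow0$ with $Y^y_{t_n}=y$ and $M^y_{t_n}\to0$), with a slightly different bookkeeping: you bound $b(t_n,Y^{y+h}_{t_n})$ from below by the fixed constant $b(0,2y)$, whereas the paper constructs a second sequence $\hat t_n$ of excursion times with $Y^y_{\tilde t_n}<y$ for $\tilde t_n\in(t_n,\hat t_n)$, so that $Y^{y+h}_{\tilde t_n}<y$ and $b(\tilde t_n,Y^{y+h}_{\tilde t_n})\ge b(0,y)$. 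Both versions work.

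The place where you genuinely diverge from the paper — and where the gap sits — is step (ii). The paper writes $M^{y+h}_{\tau_h}-M^y_{\tau_h}=\int_y^{y+h}\partial_\zeta M^\zeta_{\tau_h}\,d\zeta$ with $\partial_\zeta M^\zeta_t = M^\zeta_t\big(\int_0^t\tfrac{\dot Y^\zeta_s}{2\sqrt{Y^\zeta_s}}dB_s-\tfrac12\int_0^t\dot Y^\zeta_s\,ds\big)$, absorbs the exponential martingale factor $M^\zeta_{\tau_h}e^{-(r-\delta)\tau_h}$ by a Girsanov change of measure, and then bounds the resulting $\hat\E\big|\int_0^{\tau_h}\tfrac{\dot Y^\zeta_s}{2\sqrt{Y^\zeta_s}}d\hat B_s\big|$ by It\^o isometry and the pathwise bound $\dot Y^\zeta_s/(2\sqrt{Y^\zeta_s})\le 1/(2\sqrt\zeta)$ from~\eqref{stimaperholder}; this gives $\le\tfrac{1}{2\sqrt\zeta}\big(\hat\E[\tau_h]\big)^{1/2}\to0$ by bounded convergence, using only that the exponential density is a true martingale. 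Your alternative — pointwise convergence $h^{-1}|M^{y+h}_{\tau_h}-M^y_{\tau_h}|\to0$ plus dominated convergence with a majorant obtained from Doob's inequality applied to $e^{-(r-\delta)t}M^w_t$ and ``a finite moment of some order $p>1$'' — is not as innocuous as you present it. It is well known that in the Heston model $\E[S_T^p]$ can be infinite for $p>1$ depending on $\rho$, $\sigma$, $\kappa$, $T$ (moment explosion), so the existence of such a $p$ on a given horizon is an extra hypothesis, not a free fact; and even granting it, you never actually construct an $h$-uniform integrable majorant for $h^{-1}\int_y^{y+h}|\partial_\zeta M^\zeta_{\tau_h}|\,d\zeta$ — the localization/de-localization is only gestured at. The paper's Girsanov device is precisely what removes the need for any $L^p$ control on $\sup_t M^w_t$, and is the piece your proposal is missing.
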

		\begin{proof}
		Again we fix $t=0$ with no loss of generality. Since $y\rightarrow P(t,s,y)$ in nondecreasing, for any $h>0$ we have $P(0,b(0,y),y-h)\leq P(0,b(0,y),y)=\varphi(b(0,y))$ so that  $P(0,b(0,y),y-h)=\varphi(b(0,y))$. Therefore,
			\begin{align*}
			\frac{	P(0,b(0,y),y-h)-P(0,b(0,y),y)}{h}= 0,
			\end{align*}
			hence $\frac{\partial^-}{\partial y}P(0,b(0,y),y)=0$.
			On the other hand, since $y\mapsto P(t,x,y)$ is nondecreasing, for any $h>0$ we have
%		\begin{align*}
%		\frac{	P(0,b(0,y),y+h)-P(0,b(0,y),y)}{h}\geq 0, 
%		\end{align*}
%		so that 
			\begin{align*}
		\liminf_{h\downarrow 0}	\frac{	P(0,b(0,y),y+h)-P(0,b(0,y),y)}{h}\geq 0, 
			\end{align*}
	 To prove the other inequality, we consider the stopping time related to $P(0,b(0,y),y+h)$, that is
		$$
		\tau_h=\inf\left\{ t\in [0,T) \mid S_t^{0,b(0,y),y+h}<b(t,Y^{y+h}_t)\right \}\wedge T=\inf\left \{ t\in [0,T)  \mid M_t^{y+h}<\frac{b(t,Y^{y+h}_t)}{b(0,y)}\right\}\wedge T
		$$
		and we assume for the moment that
	\begin{equation}\label{limtauh}
	\lim_{h\rightarrow0}\tau_h=0.
	\end{equation}
	We have
					\begin{equation}\label{stimadery}
					\begin{split}
					\frac{ 	P(0,b(0,y),y+h)-P(0,b(0,y),y)}{h}&= \frac{\E \left(     e^{-r\tau_h}  \varphi\left(b(0,y)M^{y+h}_{\tau_h}\right)\right)  -   P(0,b(0,y),y)}{h}\\
					&\leq \E\left[  e^{-r\tau_h}   \frac{        \varphi\left(b(0,y)M^{y+h}_{\tau_h}\right)  -   \varphi(b(0,y)M^y_{\tau_h})}{h}     \right]\\
					&\leq K    \frac{    \E  \left[ \left| M^{y+h}_{\tau_h}  -   M^y_{\tau_h}\right|\right]}{h},
					\end{split}
					\end{equation}
					where the last inequality follows from the fact that $\varphi$ is Lipschitz continuous  and $b(0,y)\leq K$.

Now,  if the Feller condition $2\kappa\theta\geq\sigma^2$ is satisfied,   we can write
$$
M^{y+h}_{t}  -   M^y_{t}=\int_y^{y+h} \left(\int_0^t \frac{\dot{Y}^\zeta_s}{2\sqrt{Y^\zeta_s}}dB_s-\frac 1 2 \int_0^t \dot{Y}^\zeta_sds   \right) e^{(r-\delta)t-\int_0^t \frac {Y^\zeta_s}2 ds+\int_0^t \sqrt{Y^\zeta_s}dB_s   } d\zeta.
$$
Fix $\zeta$ and observe that the exponential process $e^{-\int_0^t \frac {Y^\zeta_s}2 ds+\int_0^t \sqrt{Y^\zeta_s}dB_s   } $ satisfies the assumptions od  the Girsanov Theorem, namely it is a martingale. Therefore, we can introduce a new probability measure $\hat \P$ under which the process $\hat W_t=W_t-\int_0^t\sqrt{Y_s}ds$ is a standard Brownian motion. If we denote by $\hat \E$ the expectation under the probability $\hat \P$, substituting in \eqref{stimadery} and using \eqref{stimaperholder} we get
\begin{align*}
&\frac{ 	P(0,b(0,y),y+h)-P(0,b(0,y),y)}{h}
\leq\frac{e^{rT}K}{h}\int_y^{y+h} d\zeta\hat{\E}\left[\left|\int_0^{\tau_h} \frac{\dot{Y}^\zeta_s}{2\sqrt{Y^\zeta_s}}d\hat{W_s}\right|\right]\\ %&\leq\frac{CK}{h}\int_y^{y+h} d\zeta\left(\hat{\E}\left[\left|\int_0^{\tau_h} \frac{\dot{Y^\zeta_s}}{2\sqrt{Y^\zeta_s}}d\hat{W_s}\right|\right]^2\right)^{1/2}\\
&\quad\leq \frac{e^{rT}K}{h}\int_y^{y+h} d\zeta\left(\hat{\E}\left[\int_0^{\tau_h} \left(\frac{\dot{Y^\zeta_s}}{2\sqrt{Y^\zeta_s}}\right)^2ds\right]\right)^{1/2}
\leq \frac {e^{rT}K} h \int_y^{y+h}\frac{1}{2\sqrt \zeta}\hat{\E}[\sqrt{\tau_h}]d\zeta
\end{align*}
which tends to $0$ as $h$ tends to $0$.

Therefore, as in the proof of Proposition \ref{sfx},  it remains to prove that $\lim_{h\downarrow 0}\tau_h= 0$. In order to do this, we can proceed as follows.
Again, set
$$
\Lambda^y_t=\ln (M^y_t)=(r-\delta)t-\frac 1 2 \int_0^t
Y^y_sds+\int_0^t\sqrt{Y^y_s}dW_s,
$$
so that 
$$
\tau_h=\inf\left\{t\in [0,T)\mid \Lambda_t^{y+h}  \leq \ln\left(\frac{ b(t,Y^{y+h}_t)}{b(0,y)}\right) \right \}\wedge T.
$$
We deduce from Lemma \eqref{lemmaY} that, almost surely, there exist two sequences $(t_n)_n$ and $(\hat t_n)_n$ which converge to 0 with $0<t_n<\hat t_n$ and such that
$$
Y^y_{t_n}=y, \qquad \mbox{ and, for }t\in (t_n,\hat t_n),\quad Y_t<y. 
$$
In fact, it is enough to consider a sequence $(\hat t_n)_n$  such that $\lim_{n\rightarrow\infty}\hat t_n=0$ and $Y_{\hat t_n}<y$ and define $t_n=\sup\{t\in [0,\hat t_n) \mid Y^y_t=y   \}$.

Proceeding as in the proof of Proposition \ref{sfx}, up to extract a subsequence we can assume 
$$
\Lambda^y_{t_n}<0.
$$

On the other hand, up to extracting a subsequence of $h$ converging to $0$, we can assume that, almost surely,
\[
\lim_{h\downarrow 0}\sup_{t\in[0,T]}\left|Y^{y+h}_t-Y^y_t\right|=\lim_{h\downarrow 0}\sup_{t\in[0,T]}\left|\Lambda^{y+h}_t-\Lambda^y_t\right|=0.
\]

Now, let us fix $n\in\N$. For $h$ small enough,  there exists $\delta>0$ such that
\[
\Lambda^{y+h}_{t}<0, \qquad t\in (t_n-\delta, t_n+\delta).
\]
Then, for any $\tilde{t}_n\in (t_n-\delta, t_n+\delta)\cap (t_n,\hat t_n)$, we have at the same time
 $\Lambda^{y+h}_{\tilde{t}_n}<0$ and, since $Y^{y}_{\tilde{t}_n}<y$, $Y^{y+h}_{\tilde{t}_n}<y$ for $h$ small enough.
Recalling that $t\mapsto b(t,y)$ is nondecreasing and $y\mapsto b(t,y)$ is nonincreasing, we deduce that
\[
b(\tilde{t}_n,Y^{y+h}_{\tilde{t}_n})\geq b(0,Y^{y+h}_{\tilde{t}_n})\geq b(0,y).
\]

Therefore
\[
\Lambda^{y+h}_{\tilde{t}_n}\leq \ln\left(\frac{b(\tilde{t}_n,Y^{y+h}_{\tilde{t}_n})}{b(0,y)}\right)
\]
and, as a consequence, $\tau_h\leq \tilde{t}_n\leq \hat t_n$ so \eqref{limtauh} follows.

		\end{proof}

\section{Appendix: some proofs}

We devote the appendix to the proof of some technical results used in this paper.
\subsection{Proofs of Section \ref{sect-monotony}}
\begin{proof}[Proof of Lemma \ref{lemmasup_diff}]
As in \cite[Section IV.3]{IW}, we introduce a sequence $1 > a_1 > a_2 >\dots >a_m >\dots >0$  defined by 
		$$ 	
		\int^1_{a_1} \frac 1 u du =1, \dots, \int_{a_m}^{a_{m-1}} \frac 1 u du = m, \ \dots .
		$$
		We have that $a_m$ tends to $0 $ as $m$ tends to infinity. Let $(\eta_m)_{m\geq 1}$, be a family of continuous functions such that $$\supp \eta_m\subseteq (a_m, a_{m-1}), 
		\quad
		0 \leq \eta_m(u) \leq \frac{2}{um}, \quad  \int_{a_m}^{a_{m-1}}\eta_m(u)du=1.
		$$
		Moreover, we set
		$$
		\phi_m(x) := \int_0^{|x|}dy \int_0^y \eta_m(u)du, \qquad x \in \R .
		$$
		It is easy to see  that $\phi_m \in C^2(\R)$, $|\phi_m^{'} | \leq 1$ and $\phi_m(x)\uparrow |x| $ as $m \rightarrow \infty$.
	Fix $t\in[0,T]$.	Applying It\^{o}'s formula and passing to the expectation we have, for any $m \in \N$,
		\begin{equation}\label{II}
			\E[\phi_m(Y^n_t-Y_t)]= \kappa\int_{0}^{t} \! \E\left[\phi_m^{'}(Y^n_s-Y_s)(Y_s- f^2_n(Y^n_s))\right]ds + \frac {\sigma^2} 2 \int_{0}^{t}  \!  \E \left[       \phi_m^{''}(Y^n_s-Y_s)(f_n(Y^n_s)- \sqrt{Y_s})^2  \right]ds %\\&=\kappa\int_{0}^{t} \E\left[\phi_m^{'}(Y^n_s-Y_s)(f^2(Y_s)- f^2_n(Y^n_s))\right]ds + \frac {\sigma^2} 2 \int_{0}^{t}    \E \left[       \phi_m^{''}(Y^n_s-Y_s)(f_n(Y^n_s)- f(Y_s))^2  \right]ds . 
			\end{equation}
		Let us analyse the  right hand term in \eqref{II}. Since $|\phi_m^{'}|\leq 1$, we have
		\begin{align*}
			&  \left| \kappa \int_{0}^{t} \E\left[\phi_m^{'}(Y^n_s-Y_s)(Y_s- f_n^2(Y^n_s))\right]ds \right|
		\leq \kappa \int_{0}^{t} \E\left[ |f_n^2(Y^n_s)-Y^n_s| \right]ds + \kappa \int_{0}^{t} \E\left[ | Y^n_s- Y_s|\right]ds
		\end{align*}
		On the other hand,
		\begin{align*}
			&\left|\frac {\sigma^2} 2 \int_{0}^{t}    \E \left[       \phi_m^{''}(Y^n_s-Y_s)(f_n(Y^n_s)- \sqrt{Y_s})^2\right]ds\right|   \\
			& \quad \leq  \sigma^2\int_{0}^{t}    \E \left[     |  \phi_m^{''}(Y^n_s-Y_s)|(f_n(Y^n_s)- f(Y^n_s)^2]ds  \right] + \sigma^2 \int_{0}^{t}    \E \left[  |     \phi_m^{''}(f(Y^n_s)-Y_s)|(\sqrt{Y^n_s}- \sqrt{Y_s})^2\right]ds \\
			&\quad \leq \sigma^2\int_{0}^{t}    \E \left[    \frac{2}{m|Y^n_s-Y_s|}(f_n(Y^n_s)- f(Y^n_s))^2    \mathbf{1}_{\{a_m\leq |Y^n_s-Y_s!\leq a_{m-1}\}} ]ds  \right] + \sigma^2\int_0^t \E \left[  \frac{2}{m|Y^n_s-Y_s|}  |Y^n_s-Y_s|\right]ds\\
			&\quad\leq\frac{2\sigma^2}{ma_m}  \int_{0}^{t}    \E \left[    (f_n(Y^n_s)- f(Y^n_s))^2  ]ds  \right]  + \frac{2\sigma^2t}{m}.
		\end{align*}
		Observe that, if $|x| \geq a_{m-1}$,
		$$
		\phi_m(x)\geq \int_{a_{m-1}}^{|x|}dy= |x|-a_{m-1}.
		$$
		Therefore,  for any $m$ large enough,  
		\begin{align*}
			\E[\left| Y^n_t-Y_t\right|] &\leq  \kappa   \int_0^t \E[| Y^n_s-Y_s |]ds+ \kappa \int_{0}^{t} \E\left[ |f_n^2(Y^n_s)-f^2(Y^n_s)| \right]ds+\frac{2\sigma^2}{ma_m}  \int_{0}^{t}    \E \left[    (f_n(Y^n_s)- f(Y^n_s))^2  ]ds  \right]  \\&\qquad+  \frac{2\sigma^2t}{m}+ a_{m-1}.
		\end{align*}
	Recall that $f_n(y)\rightarrow f(y)= \sqrt{y^+}$ locally uniformly and that $Y^n$ has continuous paths. Moreover, since $f_n^2$ is ì Lipschitz continuous uniformly in $n$, we have that $f^2_n(x)\leq A(|x|+1)$ with $A$ independent of $n$. Therefore, it is easily to see that for any $p>1$ there exists $C>0$ independent of $n$ such that
		\begin{equation}\label{moments}
			\E\left[\sup_{t\in [0,T]} |Y^n_t|^p  \right]\leq C.
			\end{equation}
			Fix now $m\in\N$. By using Lebesgue's Theorem,
		we deduce that there exist 
			%$\bar{m}$ and, then,  
			$\bar n$ and $C>0$ such that, for every $ n\geq \bar n$,
		$$
		\E[\left| Y^n_t-Y_t\right|] <\kappa \int_{0}^{t} \E[\left| Y^n_s-Y_s\right|]  + \kappa Ca_m + \frac{2\sigma^2}{ma_m} 	a_m +  \frac{2\sigma^2t}{m}+ a_{m-1}.. $$ 
		We can now apply  Gronwall's inequality. Passing to the limit as $m\rightarrow\infty$ and  recalling that  $\lim_{m\rightarrow \infty }a_m=0$, we can deduce  that 
	 \begin{equation}\label{convfixedt}
	\lim_{n\rightarrow \infty} \E[\left| Y^n_t-Y_t\right|] = 0.
	 	\end{equation} 
	
		Now, note that 
	\begin{equation}\label{relsup}
	\sup_{t \in [0,T]}|Y^n_t-Y_t|\leq \kappa \int_0^T|Y_s-Y^n_s|ds  + \sup_{t \in [0,T]}\left|    \int_0^t  (\sqrt{Y_s}-f_n(Y^n_s))dW_s\right|
	\end{equation}
The first term in the right hand side of \eqref{relsup} converges to 0 in probability  thanks to \eqref{convfixedt}, so it is enough to prove that the second term converges to 0. We have
\begin{equation}\label{stimasup}
\E\left[    \sup_{t \in [0,T]}\left|    \int_0^t  (\sqrt{Y_s}-f_n(Y^n_s))dW_s\right|  \right]\leq\left(\int_0^T \E[ |\sqrt{Y_s}-f_n(Y^n_s)|^2]ds\right)^{\frac 1 2 }
\end{equation}
and 
\begin{align*}
\E\left[ |\sqrt{Y_s}-f_n(Y^n_s)|^2	\right]&\leq 2\E\left[ |\sqrt{Y_s}-\sqrt{Y^n_s}|^2\right]+2\E\left[ |\sqrt{Y^n_s}-f_n(Y^n_s)|^2\right]\\&\leq 2\E\left[ |Y_s-Y^n_s|\right]+2\E\left[ |\sqrt{Y^n_s}-f_n(Y^n_s)|^2\right].
\end{align*}
Therefore, we can conclude that \eqref{stimasup} tends to 0 as $n$ goes to infinity by using \eqref{convfixedt} and the Lebesgue Theorem so that \eqref{supY} is proved.

		As regards \eqref{1}, for every $n\in\N$ we have
		$$
		X^n_t= x +\int_0^t \left( r-\delta-\frac {f_n^2(Y^n_s)} 2 \right)ds + \int_0^t  f_n(Y^n_s)dB_s,
		$$
		% 	so 
		% 	$$
		% 	|X^n_t-X_t|\leq \int_0^t \frac 1 2 |f^2_n(Y^n_s)- f^2(Y_s)| ]ds +\left| \int_0^t f(Y^n_s)- f(Y_s) dW_s  \right|.
		% 	$$
		so that
		\begin{equation}\label{calcoloperX}
	\sup_{t \in [0,T]}|X^n_t-X_t|  \leq \frac 12 \int_0^T |f^2_n(Y^n_s)- Y_s| ds + 	\sup_{t \in [0,T]}  \left|\int_0^t (f_n(Y^n_s)- \sqrt{Y_s} )dB_s \right|    .
		\end{equation}
		It is enough to show that the two terms in the right hand side  of \eqref{calcoloperX} converge to 0 in probability.
		
		Concerning the first term, note that,
		% 	 we know that $\sup_{t\in [0,T]} |Y^n_t-Y_t|\rightarrow 0 $ in probability and that $f^2_n\rightarrow f^2$ locally uniformly, and we want to show that $ \int_0^T|f^2_n(Y^n_s)- f^2(Y_s)| ds  \rightarrow 0 $ in probability.  Up to consider a subsequence, we can assume that  $\sup_{t\in [0,T]} |Y^n_t-Y_t|\rightarrow 0 $ a.s.. 
		since $Y$ has continuous paths, for every $ \omega \in \Omega, \, Y_{[0,T]}(\omega)$ is a compact set and 
		$
		K:=\{    x | d(x,Y_{[0,T]}) \leq 1     \}
		$
		is compact as well.
		For $n$ large enough,  $Y^n$ lies in $K$, so 
		\begin{align*}
			& \int_0^T |f^2_n(Y^n_s)- f^2(Y_s)| ds  \leq  \int_0^T |f^2_n(Y^n_s)- f^2(Y^n_s)| ds + \int_0^T |f^2(Y^n_s)- f^2(Y_s)|ds,
		\end{align*}
		which goes to $0$	as $n$ tends to infinity, since $f^2_n\rightarrow f^2$ locally uniformly and $f^2$ is a continuous function. 
		
		On the other hand, for the second term in the right hand side of \eqref{calcoloperX}, we have
		$$
		\E\left[	\sup_{t \in [0,T]}  \left|\int_0^t f(Y^n_s)- \sqrt{Y_s} dW_s \right|\right]\leq \left(  \int_0^T \E[(f(Y^n_s)- \sqrt{Y_s}  )^2] ds \right)^{\frac 1 2}
		$$
and we can prove with the usual arguments that the last term goes to 0.

\end{proof}
\subsection{Proofs of Section \ref{sect-put}}
\begin{proof}[Proofs of Lemma \ref{support}]
To simplify the notation we pass to the logarithm and we prove the assertion for the pair $(X,Y)$. We can get rid of the correlation between the  Brownian motions  with a standard transformation, getting
\begin{equation*}
\begin{cases}
dX_t=(r-\delta-\frac 1 2 Y_t)dt +  \sqrt{Y_t}(\sqrt{1-\rho^2}d\bar W_t+\rho dW_t),\qquad&X_0\in\R,\\ dY_t=\kappa(\theta-Y_t)dt+\sigma\sqrt{Y_t}dW_t, &Y_0\geq 0,
\end{cases}
\end{equation*}
where $\bar W$ is a standard Brownian motion independent of $W$. Moreover, from the SDE satisfied by $Y$ we deduce $\int_0^t \sqrt{Y_s}dW_s=\frac 1 \sigma\left(Y_t-Y_0-\int_0^t\kappa(\theta-Y_s)ds\right)$. Conditioning with respect to $Y$, it suffices to prove that, for every continuous function $m: [0,T]\rightarrow \R$ such that $m(0)=X_0$ and for every $\epsilon>0$ we have
\begin{equation}
\label{supX}
\P\left(\sup_{t\in [0,T]}|X_t-m(t)|<\epsilon\mid Y \right)>0,
\end{equation}
and
\begin{equation}
\label{supY}
\P\left(\sup_{t\in [0,T]}|Y_t-Y_0|<\epsilon \right)>0.
\end{equation}
As regards  \eqref{supX}, by using the Dubins-Schwartz Theorem, there exists a Brownian motion $\tilde W$ such that 
\begin{align*}
&\P\left(\sup_{t\in [0,T]}\left |x+\int_0^t \left(r-\delta-\frac{Y_s} 2-\frac {\rho\kappa} \sigma (\theta-Y_s) \right) ds    +\frac \rho \sigma(Y_t-y) +\sqrt{1-\rho^2}  \int_0^t\sqrt{Y_s}d\bar W_s  -m(t)\right|<\epsilon\mid Y \right)\\
&=\P\left(\sup_{t\in [0,T]}\left|\sqrt{1-\rho^2}  \int_0^t\sqrt{Y_s}d\bar W_s  -\tilde m(t)\right|<\epsilon\mid Y \right)=\P\left(\sup_{t\in [0,T]}\left|\sqrt{1-\rho^2}  \tilde W_{\int_0^tY_sds}  -\tilde m(t)\right|<\epsilon\mid Y \right),
\end{align*}
where  $\tilde m(t)=m(t)-x-\int_0^t \left(r-\delta-\frac{Y_s} 2-\frac {\rho\kappa} \sigma (\theta-Y_s) \right) ds    -\frac \rho \sigma(Y_t-y) $ is a continuous function which, conditioning w.r.t. $Y$, can be considered deterministic. Then, \eqref{supX} follows by the support theorem for Brownian motions.

In order to prove \eqref{supY}, we distinguish two cases.  Assume  first that $Y_0=y_0>0$ and, for $a\geq 0$, define the stopping time
 $$
 T_{a}=\inf \left\{ t>0\mid Y_t=a 	\right \}.
 $$ 
 Moreover, let us consider the function 
 $$
 \eta(y)=\begin{cases}
\sqrt{y},\qquad&\mbox{ if } y> \frac{y_0}{2},\\
\frac{\sqrt{y_0}} 2 \qquad&\mbox{ if } y\leq \frac{y_0}{2},
 \end{cases}
 $$
 and the process $(\tilde Y_t)_{t\in[0,T]}$, solution to the uniformly elliptic  SDE
 $$
 d\tilde Y_t=\kappa(\theta-\tilde Y_t)dt+\sigma\eta(\tilde Y_t)dW_t,\qquad \tilde Y_0=Y_0.
 $$
 It is clear that $Y_t=\tilde Y_t$ on the set $\left\{t\leq T_{\frac {y_0} 2}\right\}$  so  we have, if $\epsilon<\frac{y_0}2$,
\begin{align*}
 \P\left(\sup_{t\in [0,T]}|Y_t-Y_0|<\epsilon \right)= \P\left(\sup_{t\in [0,T]}|\tilde Y_t-Y_0|<\epsilon \right),
\end{align*}
 where the last inequality follows from the classical Support Theorem for  uniformly elliptic diffusions (see, for example,  \cite{SV1}).
 
 On the other hand, if we assume $Y_0=0$, then we can write
 $$
 \P\left(\sup_{t\in [0,T]} Y_t<\epsilon \right)=\P\left(  T_{\frac \epsilon 2}  \geq T \right)+\P\left(  T_{\frac \epsilon 2}  < T , \forall t\in \left[T_{\frac \epsilon 2}  ,T\right] Y_t<\epsilon \right).
 $$
 Now, if $\P\left( T_{\frac \epsilon 2} < T\right)>0$, we can deduce that the second term in the right hand side is positive using the strong Markov property and the same argument we have used before  in the case with $Y_0\neq 0$. Otherwise, $\P\left(  T_{\frac \epsilon 2}  \geq T \right)=1$ which concludes the proof.
\end{proof}

%\begin{proof}[Proof of Lemma \ref{lemmat}]
%	Let us define $\tilde{\Ex}_t=\{(s,y)\in (0,\infty)\times[0,\infty): s<b(t,y^+)\}$. Note that $\etildet\neq \emptyset$ since $b>0$.
%	We first show that $\overline{ \tilde{\Ex}_t}=\Ex_t$. If $(s,y)\in \etildet$, then $s<b(t,y^+)\leq b(t,y)$, since $y\mapsto b(t,y)$ is nonincreasing. Therefore, $ \tilde{\Ex}_t\subseteq \Ex_t$ so that, since $\mathcal E_t$ is closed, $\overline{ \tilde{\Ex}_t}\subseteq \Ex_t$ .
%	
%	On the other hand, let $(s,y)\in \Ex_t$ and consider the sequence $((s_n,y_n))_n=((s- 1/ n,y-1 /n))_n$. Then, $(s_n,y_n)\rightarrow (s,y)$ and we  prove that $(s_n,y_n)\in \etildet$, so that $(s,y)\in 	\overline{\etildet}$. In fact, for each $n\in \N$,  we can consider the sequence $((s_{n,k},y_{n,k}))_{k>n}, = \left(\left(s-\frac 1 n+\frac 1 k,y-\frac 1 n+\frac 1 k\right)\right)_{k>n}$. We have
%	$$
%	s_{n,k}=	s-\frac 1 n+\frac 1 k<s\leq b(t,y)\leq b\left(t,y-\frac 1 n+\frac 1 k\right) =b\left(t,y_{n,k}\right).
%	$$
%	Letting $k$ tends to infinity, we get
%	$$
%	s_n<s\leq b(t,y_n^+),
%	$$
%	hence $(s_n,y_n)\in \etildet$, and the assertion is proved.
%	
%	Then, we show that $\etildet=\mathring{\Ex}_t$. Note that $\etildet$ is an open set, since the function $(s,y)\mapsto b(t,y^+)-s$ is lower semicontinuous. Therefore $\etildet\subseteq \mathring{\Ex}_t$. Let us now consider an open set $A\subseteq \Ex_t$. Fix $(s,y)\in A$, then $\left(s+\frac 1 n,y+\frac 1 n\right)\in A$ for $n$ large enough. Therefore,
%	$$
%	s<s+\frac 1 n \leq b\left( t, y+\frac 1 n\right)\leq b(t,y^+),
%	$$ 
%	hence $(s,y)\in \etildet$.
%\end{proof}
\begin{proof}[Proof of Lemma \ref{lemmaY}]	
We have
	\begin{align*}
	Y^y_t-y&=\kappa\int_0^t(\theta-Y^y_s)ds+\sigma\int_0^t\sqrt{Y^y_s}dW_s\\
	&=\sigma\sqrt{y}W_t+\kappa\int_0^t(\theta-Y^y_s)ds+\sigma\int_0^t\left(\sqrt{Y^y_s}-\sqrt{y}\right)dW_s,
	\end{align*}
so it is enough to prove that, if $(H_t)_{t\geq 0}$ is a predictable process such that 
	$\lim_{t\downarrow 0} H_t=0$ a.s., we have
	\[
	\lim_{t\downarrow 0}\frac{\int_0^tH_s dWs}{\sqrt{2t\ln\ln(1/t)}}=0 \mbox{ p.s.}
	\]
This follows by using standard arguments, we include a proof for the sake of completeness. By using   Dubins-Schwartz inequality we deduce that, if  $f(t)=\sqrt{2t\ln\ln(1/t)}$, for $t$ near to $0$ we have
	\[
	\left|\int_0^tH_s dWs\right|\leq Cf\left(\int_0^tH_s^2ds\right).
	\]
	Let us consider $\varepsilon>0$. For $t$ small enough, we have $\int_0^tH_s^2ds\leq \varepsilon t$ and, since $f$ increases near $0$,
	\[
	\left|\int_0^tH_s dWs\right|\leq Cf\left(\varepsilon t\right).
	\]
We have
	\begin{align*}
	\frac{f^2(\varepsilon t)}{f^2(t)}&=\frac{\varepsilon t\ln\ln(1/\varepsilon t)}{t\ln\ln(1/t)}=\varepsilon\frac{\ln\left(\ln(1/t)+\ln(1/\varepsilon)\right)}{\ln\ln(1/t)}\leq \varepsilon\frac{\ln\left(\ln(1/t)\right)+\frac{\ln(1/\varepsilon)}{\ln(1/t)}}{\ln\ln(1/t)}=\varepsilon\left(1+\frac{\ln(1/\varepsilon)}{\ln(1/t)\ln\ln(1/t)}\right),
	\end{align*}
	where we have used the inequality $\ln(x+h)\leq \ln(x)+\frac{h}{x}$ (for $x,h>0$).
	Therefore $\limsup_{t\downarrow 0}\frac{f(\varepsilon t)}{f(t)}\leq \sqrt{\varepsilon}$ and the assertion follows.
\end{proof}
	\begin{proof}[Proof of Lemma \ref{lemmaBM}]
%	We  first prove \eqref{detILLinf}.	Fix $(t_n)_n$ such that $t_n\rightarrow \infty$.
With  standard inversion arguments, it suffices to prove  that,  for a sequence $t_n$ such that $\lim_{n\rightarrow\infty }t_n=\infty$, we have, with probability one,
\begin{equation}\label{detILLinf}
	\limsup_{n\rightarrow \infty }\frac{B_{t_n}}{\sqrt{t_n}}=+\infty.% \quad a.s.,\qquad\qquad 	\liminf_{n\rightarrow \infty }\frac{B_{t_n}}{\sqrt{t_n}}=-\infty,\quad a.s.,
\end{equation}
		The assertion is equivalent to
		$$
		\P\left(\limsup_{n\rightarrow\infty}\frac{B_{t_n}}{\sqrt{t_n}}\leq c\right)=0, \qquad c>0,
		$$
		that is
		$$
		\P\left(\bigcup_{m\geq1}\bigcap_{n\geq m} \left\{\frac{B_{t_n}}{\sqrt{t_n} }\leq c \right\}\right)=0, \qquad c>0.
		$$
		Therefore, it is sufficient to prove that $	\P\left(\bigcap_{n\geq m} \left\{\frac{B_{t_n}}{\sqrt{t_n} }\leq c \right\}\right)=0$ for every $m\in \N$ and $c>0$. Take, for example, $m=1$ and consider the random variables $\frac{B_{t_1}}{\sqrt{t_1}}$ and $\frac{B_{t_n}}{\sqrt{t_n}}$, for some $n>1$. Then,
		$$
		\frac{B_{t_1}}{\sqrt{t_1}}, \,\frac{B_{t_n}}{\sqrt{t_n}}\sim \mathcal{N}(0,1),
		$$
		where $\mathcal{N}(0,1)$ is the standard Gaussian law and 
		$$
		\mbox{Cov}\left( \frac{B_{t_1}}{\sqrt{t_1}}, \frac{B_{t_n}}{\sqrt{t_n}}\right)=\frac{t_1\wedge t_n}{\sqrt{t_1t_n}}<
		\sqrt{	\frac{t_1}{t_n}},$$	which tends to $0$ as $n$ tends to infinity.
		We deduce that 
		%$\frac{B_{t_1}}{\sqrt{t_1}}$ and $\frac{B_{t_n}}{\sqrt{t_n}}$ converge to two independent Gaussian random variables and 
		$$
		\P\left(	\frac{B_{t_1}}{\sqrt{t_1}}\leq c, \frac{B_{t_n}}{\sqrt{t_n}}\leq c\right)\rightarrow \P(Z_1\leq c,Z_2\leq c)=\P(Z_1\leq c)^2,
		$$
		where $Z_1$ and $Z_2$ are independent with $Z_1,\,Z_2\sim \mathcal{N}(0,1)$.
		
		Take  now  $m_n\in \N$ such that $t_{m_n}>nt_n$. Then, we have 
		$$
		\frac{B_{t_1}}{\sqrt{t_1}}, \,\frac{B_{t_n}}{\sqrt{t_n}},\frac{B_{t_{m_n}}}{\sqrt{t_{m_n}}}\sim \mathcal{N}(0,1)
		$$
		and
		$$ 
		\mbox{Cov}\left( \frac{B_{t_1}}{\sqrt{t_1}}, \frac{B_{t_{m_n}}}{\sqrt{t_{m_n}}}\right),\,
		\mbox{Cov}\left( \frac{B_{t_n}}{\sqrt{t_n}}, \frac{B_{t_{m_n}}}{\sqrt{t_{m_n}}}\right)\leq
		\sqrt{	\frac{t_n}{t_{m_n}}}.
		$$
		which again tends to $0$ ad $n$ tends to infinity. Therefore, we have 
		$$
		\P\left(	\frac{B_{t_1}}{\sqrt{t_1}}\leq c, \frac{B_{t_n}}{\sqrt{t_n}}\leq c,	\frac{B_{t_{m_n}}}{\sqrt{t_{m_n}}}\leq c\right)\rightarrow \P(Z_1\leq c)^3
		$$
		with $Z_1\sim \mathcal{N}(0,1)$. 
		Iterating this procedure, we can find a subsequence $(t_{n_k})_{k\in \N}$ such that $t_{n_k}\rightarrow \infty $ and 
		$$
		\P\left(\bigcap_{k\geq 1} \left\{\frac{B_{t_{n_k}}}{\sqrt{t_{n_k}} }\leq c \right\}\right)=0
		$$
	which proves that $\limsup_{n\rightarrow \infty }\frac{B_{t_n}}{\sqrt{t_n}}=+\infty$. 
	%	The fact that   $\liminf_{n\rightarrow \infty }\frac{B_{t_n}}{\sqrt{t_n}}=-\infty$ follows now with standard arguments.
		
%As regards \eqref{detILL0}, recall that $tB_{{1/t}}$ is a Brownian motion, so that \eqref{detILLinf} gives
%	 $$
%	 \limsup_{n\rightarrow\infty}\frac{s_nB_{\frac 1{s_n}}}{\sqrt{s_n}}=+\infty.
%	 $$
%	 Then the assertion follows from a standard change of variables.
	\end{proof}

\end{document}